\documentclass[11pt]{amsart}
\usepackage[latin1]{inputenc}
\usepackage{amsfonts}
\usepackage[toc,page,title,titletoc,header]{appendix}
\usepackage{graphicx,psfrag,epsfig,multirow,caption,subcaption}
\usepackage{amssymb,amsmath,amscd,amsthm,amssymb,verbatim,setspace}
\numberwithin{equation}{section}
\usepackage{mathrsfs,wrapfig}
\usepackage{indentfirst}
\usepackage{extarrows}
\usepackage{float}

\usepackage{xcolor}

\usepackage[colorlinks=true]{hyperref}

\newtheorem{theorem}{Theorem}[section]
\newtheorem{definition}[theorem]{Definition}
\newtheorem{conjecture}[theorem]{Conjecture}
\newtheorem{proposition}[theorem]{Proposition}
\newtheorem{lemma}[theorem]{Lemma}
\newtheorem{remark}[theorem]{Remark}
\newtheorem{corollary}[theorem]{Corollary}

\newtheorem*{remark*}{Remark}

\numberwithin{equation}{section}

\newcommand{\PiZ}{\Pi_{=0}}
\newcommand{\PiNZ}{\Pi_{\ne0}}

\newcommand{\Err}{\mathcal E}
\newcommand{\Rem}{\mathcal Q}

\newcommand{\dv}{\text{div}}

\newcommand{\mfk}{\mathfrak}

\newcommand{\eps}{\varepsilon}

\newcommand{\rmM}{\mathrm{M}}

\newcommand{\rmW}{\mathrm{W}}
\newcommand{\rmT}{\mathrm{T}}

\newcommand{\dt}{\delta}

\newcommand{\bn}{\mathbf{n}}
\newcommand{\br}{\mathbf{r}}

\newcommand{\bM}{\mathbf{M}}
\newcommand{\bS}{\mathbb{S}}

\newcommand{\bx}{\mathbf{x}}
\newcommand{\by}{\mathbf{y}}
\newcommand{\RR}{\mathbb{R}}
\newcommand{\mbfA}{\mathbf{A}}

\newcommand{\cA}{\mathcal{A}}
\newcommand{\cP}{\mathcal{P}}
\newcommand{\cM}{\mathcal{M}}
\newcommand{\cH}{\mathcal{H}}
\newcommand{\cE}{\mathcal{E}}
\newcommand{\cB}{\mathcal{B}}
\newcommand{\cS}{\mathcal{S}}
\newcommand{\cC}{\mathcal{C}}
\newcommand{\cL}{\mathcal{L}}
\newcommand{\cK}{\mathcal{K}}

\newcommand{\cQ}{\mathcal{Q}}

\newcommand{\cG}{\mathcal{G}}
\newcommand{\cU}{\mathcal{U}}
\newcommand{\cV}{\mathcal{V}}

\newcommand{\R}{\mathbb{R}}

\newcommand{\A}{\mathbb{A}}
\newcommand{\pr}{\partial}

\DeclareMathOperator{\loc}{loc}

\title[MCF with cylindrical singularities]
{Mean curvature flows with cylindrical singularities II: stability and genericity}
\author{Ao Sun}
\address{Lehigh University, Department of Mathematics, Chandler-Ullmann Hall, Bethlehem, PA 18015}
\email{aos223@lehigh.edu}
\author{Jinxin Xue}
\address{New Cornerstone Science Laboratory, Department of Mathematics, Rm A115, Tsinghua University, Haidian District, Beijing, 100084}
\email{jxue@tsinghua.edu.cn}
\date{\today}

\begin{document}
	\maketitle
	\begin{abstract}
		In \cite{SunXue25_GenericI}, we introduced the notion of nondegenerate cylindrical singularity using the ideas from dynamical systems. In this paper, we further study the properties of these singularities, showing that they are stable under small perturbations. Furthermore, we show that we can perturb some mean curvature flow with degenerate singularities to produce nondegenerate singularities.
	\end{abstract}
	
	\section{Introduction}
	Mean curvature flow (MCF) is defined as a family of hypersurfaces $\{\bM_t\}_{t\in I}$ evolving in $\mathbb{R}^{n+1}$ according to the equation $\pr_t x=\vec{H}(x)$, where $\vec H$ is the mean curvature vector. It is a fundamental geometric flow that has attracted considerable attention in diverse fields, including geometry, partial differential equations, and applied mathematics.
	
	The cylindrical singularities characterize the change of geometry and topology of mean curvature flows. On the other hand, it can have complicated shapes. In the famous example of ``marriage ring'' -- a rotationally symmetric mean curvature flow that is a thin torus, the singular set is a circle, a continuous curve. In \cite{SunXue25_GenericI}, we introduced the notion of {\bf nondegenerate cylindrical singularity}, and they are isolated in a spacetime backward parabolic neighborhood\footnote{Later in joint work with Zhihan Wang \cite{SunWangXue1_Passing}, we proved that the nondegenerate singularities are isolated in the forward parabolic neighborhood as well. We also derived many other properties of nondegenerate singularities, and a relatively complete picture of the mean curvature flow passing through them.}. Hence, the singularities of the marriage ring are not nondegenerate.
	
	Let us first recall the definition of nondegeneracy of a cylindrical singularity. Let $\cC_{n,k}:=S^{n-k}(\sqrt{2(n-k)})\times\R^k$ be a generalized cylinder for $k=1,2,\cdots,n-1$, and we use $\theta=(\theta_1,\cdots,\theta_{n-k+1})$ to denote the coordinates on the sphere factor and $y=(y_1,\cdots,y_k)$ to denote the coordinates on the $\R^k$ factor. We use $\varrho:=\sqrt{2(n-k)}$ to denote the sphere factor radius. On $\cC_{n,k}$, the natural functional space is the Gaussian weighted space, namely for $\Omega\subset\cC_{n,k}$ and $f:\Omega\to\R$, we define
	\[
	\|f\|_{L^2(\Omega)}^2=\int_{\Omega} |f(x)|^2 e^{-\frac{|x|^2}{4}}d\cH^n(x),
	\]
	and for any $\ell>0$,
	\[
	\|f\|_{H^\ell(\Omega)}^2=\int_{\Omega} \sum_{j=0}^\ell|\nabla^j f(x)|^2 e^{-\frac{|x|^2}{4}}d\cH^n(x).
	\]
	
	Suppose $\{\bM_{t}\}_{t\in I}$ is a mean curvature flow of embedded hypersurfaces in $\R^{n+1}$ and $M(\tau):=e^{\tau/2}\bM_{-e^{-\tau}}$ is the corresponding rescaled mean curvature flow detecting the singularity at $(0,0)$. We say $(0,0)$ is a cylindrical singularity if $M(\tau)\to \cC_{n,k}$ smoothly as $\tau\to\infty$ in any compact subset of $\R^{n+1}$. In \cite{SunXue25_GenericI}, based on the work of Colding-Minicozzi \cite{CM1}, we proved the following asymptotic theorem.
	\begin{theorem}[Theorems 1.2 and 1.3 in \cite{SunXue25_GenericI}]
		\label{ThmNF-sqrtt}
		Suppose that the rescaled mean curvature flow $M_\tau=e^{\tau/2}\bM_{-e^{-\tau}}$ converges to $\cC_{n,k}$ smoothly locally as $\tau\to\infty$. Then one of the following alternatives holds.
		
		\begin{enumerate}
			\item There is a nonempty subset $\mathcal I\subset\{1,\ldots,k\}$ such that, after a fixed rotation in the $\mathbb R^k$-factor, for every $K>0$ and $\vartheta\in(0,1)$,
			\[
			\left\|
			u(\cdot,\tau)
			-
			\frac{\varrho}{4\tau}
			\sum_{i\in\mathcal I}(y_i^2-2)
			\right\|_{H^1(\cC_{n,k}\cap B_{K\sqrt{\tau}})}
			\leq
			C\tau^{-1-\vartheta}
			\]
			and
			\[
			\left\|
			u(\cdot,\tau)
			-
			\varrho
			\left(
			\sqrt{
				1+\frac{\sum_{i\in\mathcal I}(y_i^2-2)}{2\tau}
			}
			-1
			\right)
			\right\|_{C^1(\cC_{n,k}\cap B_{K\sqrt{\tau}})}
			\leq
			C\tau^{-\vartheta}
			\]
			for all sufficiently large $\tau$.
			
			\item There are constants $K>0$ and $c>0$ such that
			\[
			\|u(\cdot,\tau)\|_{H^1(\cC_{n,k}\cap B_{K\sqrt{\tau}})}
			+
			\|u(\cdot,\tau)\|_{C^1(\cC_{n,k}\cap B_{K\sqrt{\tau}})}
			\leq
			Ce^{-c\tau}
			\]
			for all sufficiently large $\tau$.
		\end{enumerate}
	\end{theorem}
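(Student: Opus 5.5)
We plan to follow the dynamical-systems scheme of Colding--Minicozzi \cite{CM1}, working with the graph function $u(\cdot,\tau)$ of $M_\tau$ over $\cC_{n,k}$ on the growing region $\cC_{n,k}\cap B_{R(\tau)}$, with $R(\tau)\to\infty$ slowly. We use the Lojasiewicz-type estimates of \cite{CM1} to guarantee that this graphical representation exists with small $C^2$ norm there. The function $u$ satisfies
\[
\pr_\tau u=L u+\cQ(u)+\cE,
\]
where $L=\Delta-\frac12\langle x,\nabla\cdot\rangle+\frac{1}{2}+\frac{1}{\varrho^2}$ is the linearized operator on the cylinder, $\cQ$ is at least quadratic in $(u,\nabla u,\nabla^2u)$, and $\cE$ is the cutoff error. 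This error is exponentially small in $R(\tau)^2$, because of the Gaussian weight.

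We decompose $L^2$ along the spectrum of $L$. The spectrum splits into the unstable modes (constants and $y_i$, $\theta_j$), the neutral modes (the quadratic Hermite polynomials $y_iy_j-2\delta_{ij}$ and the rotational fields $y_i\theta_j$), and the stable modes. The rotational modes are removed by choosing the rotation of the cylinder at each time; a modulation argument shows the rotation converges, which produces the fixed rotation in the $\R^k$ factor. The unstable modes are controlled because the flow converges to $\cC_{n,k}$ as $\tau\to\infty$, so backward-in-$\tau$ growth is excluded. A Merle--Zaag type ODE lemma then shows that one of two things happens: either the neutral part dominates, $\|u-\Pi_0u\|\le o(\|\Pi_0 u\|)$, or the stable part dominates and $\|u\|$ decays exponentially. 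The second case yields alternative (2) in $L^2$. We upgrade it to $H^1$ and $C^1$ on $B_{K\sqrt\tau}$ by parabolic interior estimates together with pseudolocality, which propagates the smallness outward.

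In the neutral-dominated case, we write $\Pi_0u=\sum a_{ij}(\tau)(y_iy_j-2\delta_{ij})$ and project the quadratic term onto the neutral space. The computation gives a matrix Riccati equation
\[
\dot A=-\tfrac{1}{\varrho}\,c\,A^2+O(|A|^{2+\delta}),
\]
where the constant $c$ is fixed by the second variation of the shrinker entropy. Diagonalizing $A$, each eigenvalue either behaves like $\frac{\varrho}{4\tau}$ or decays faster. Convergence to the cylinder rules out negative eigenvalues, since these would blow up in finite time. The nonzero eigenvalues define $\mathcal I$, and this gives the $H^1$ estimate with rate $\tau^{-1-\vartheta}$.

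\textbf{Main obstacle.} The hardest step is extending this conclusion from compact sets to the parabolic scale $B_{K\sqrt\tau}$ and obtaining the $C^1$ profile $\varrho(\sqrt{1+\sum(y_i^2-2)/2\tau}-1)$. Our first approach is to compare $M_\tau$ with explicit barriers: rotationally symmetric shrinking necks with profile $\sqrt{\varrho^2+\varrho^2\sum(y_i^2-2)/(2\tau)\pm\epsilon}$. We would check the sub/supersolution property on $B_{K\sqrt\tau}$ using the $H^1$ asymptotics as boundary data on compact sets. If the barriers fail, the fallback is a weighted $L^\infty$ estimate for $u/\,$profile in the intermediate region, following the Angenent--Velázquez style matching. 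With either method, the $\vartheta<1$ loss is expected from this matching.
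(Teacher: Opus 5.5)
Your outline has the right overall architecture (spectral splitting, rotation gauge, matrix Riccati equation, then an intermediate-region argument), but it has two genuine gaps and a questionable final step.

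\emph{First gap: the dichotomy does not give alternative (2).} Your cutoff error is additive. It has size $e^{-cR(\tau)^2}\|u\|_{C^2(\mathbb A_{R(\tau)})}$, and on the annulus you only know $\|u\|_{C^2}\le\eps_0$, not a bound in terms of $\|\chi_R u\|$. A Merle--Zaag lemma needs every error to be at most $\eps(x+y+z)$, so it only runs while $\|\chi_R u\|\gg e^{-cR(\tau)^2}$. This is exactly why the one-step estimate the paper imports from Part I (Proposition \ref{PropApproximation}) is additive. It is also why the cone theorem (Theorem \ref{ThmCone}) holds only on the active scale $\|v(m)\|\ge R_*\omega(m)^{1/2}$; its contracting alternative yields no more than $\|v(m)\|\lesssim\omega(m)^{1/2}=e^{-\br(m)^2/10}$. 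With $R(\tau)\to\infty$ slowly, the stable-dominated case therefore gives decay only down to $e^{-cR(\tau)^2}$, which is not $e^{-c\tau}$ unless $R(\tau)\gtrsim\sqrt\tau$. To reach alternative (2) you must interleave the spectral argument with a bootstrap of the graphical radius:
the smallness on $B_{R(\tau)}$, fed into pseudolocality (Theorem \ref{thm:pseudo-locality}) and interior estimates, enlarges the ball on which $M_\tau$ is a small graph;
this lowers the cutoff error and improves the decay;
you iterate until the radius is comparable to $K\sqrt\tau$ and the cutoff error is $e^{-cK^2\tau}$.
In your outline pseudolocality enters only after exponential $L^2$ decay is already assumed, which is the wrong order. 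In the neutral case you likewise need $\tau^{-1}\gg e^{-cR(\tau)^2}$, i.e.\ $R(\tau)\ge C\sqrt{\log\tau}$.

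\emph{Second gap: the fixed rotation in the $\R^k$-factor does not come from modulating the rotational modes.} The gauge of Section \ref{SSSBrendle} removes only the $\theta_\alpha y_j$ modes, i.e.\ rotations mixing the two factors. Rotations inside $\R^k$ preserve $\cC_{n,k}$, are invisible to that gauge, and act on the quadratic coefficient matrix $A(\tau)$ by conjugation. Diagonalizing $A(\tau)$ at each time and running scalar Riccati inequalities on the eigenvalues therefore gives no fixed frame. What you need is convergence of the eigenprojections: $A(\tau)=P_\infty/(\gamma\tau)+O(\tau^{-1-\sigma})$ with $P_\infty$ a fixed orthogonal projection, which is Lemma \ref{LmNeutralPersistence}. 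Its proof uses that $-\gamma A^2$ commutes with $A$, while the forcing $O(\tau^{-\sigma}|A|^2)$ divided by the gap $\sim 1/\tau$ between the eigenvalue clusters is integrable. The same estimate, rather than ``decays faster'', is what puts the degenerate eigenvalues at $O(\tau^{-1-\sigma})$, as the $H^1$ bound requires.

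\emph{The $C^1$ profile on $B_{K\sqrt\tau}$.} Your barrier comparison uses data only on compact sets, but a comparison on $B_{K\sqrt\tau}$ also needs control at $|y|=K\sqrt\tau$. There you have a priori at best a graph of bounded height. The tool the paper cites from Part I for large-range $C^1$ bounds is instead the Ornstein--Uhlenbeck regularization (Proposition \ref{PropVelazquez}). It needs only the weighted $L^2$ bound, its forcing $C_F(1+|y|^2)/r^2$ is exactly the residual of the square-root profile, and the outward OU drift makes the region beyond the graphical ball negligible, so no outer boundary data are needed.

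Two smaller slips. The potential term of $L$ is $|A_{\cC_{n,k}}|^2+\tfrac12=\tfrac{n-k}{\varrho^2}+\tfrac12=1$, not $\tfrac12+\varrho^{-2}$. The Riccati constant comes from the quadratic nonlinearity (the triple product $A_{2,2,2}$ of Lemma \ref{LmTriple}), i.e.\ a third-variation quantity, since the second variation vanishes on the Jacobi fields $y_i^2-2$.
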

	
	\begin{definition}
		\label{DefDeg}
		A cylindrical singularity as in Theorem \ref{ThmNF-sqrtt} is
		
		\begin{itemize}
			\item \emph{nondegenerate} if $\mathcal I=\{1,\ldots,k\};$
			
			\item \emph{partially nondegenerate} if $\emptyset\neq\mathcal I\subsetneq\{1,\ldots,k\};$
			
			\item \emph{degenerate} if $\mathcal I=\emptyset.$
		\end{itemize}
	\end{definition}
	From a dynamical point of view, nondegenerate cylindrical singularities have
	the full-rank neutral asymptotic behavior.  The linearized equation for a graph
	over $\cC_{n,k}$ is
	\[
	\partial_\tau u=L_{\cC_{n,k}}u
	:=\Delta_{\cC_{n,k}}u-\frac12\langle x,\nabla u\rangle+u.
	\]
	The positive $L$-modes are generated by the constant function and by
	$\theta_\alpha,y_j$.  Convergence to the cylinder forces these modes to be
	removed by the choice of spacetime center and scale.  After the rotational
	zero modes have also been fixed, the first genuine asymptotic modes are the
	quadratic Hermite functions.  Nondegeneracy means that their coefficient
	matrix has full rank; after a fixed axial rotation its leading term is
	$\sum_{i=1}^k(y_i^2-2)$.
	
	The main results in this paper extend the genericity of the nondegenerate cylindrical singularities from the dynamical level to the flow level. First of all, we prove the stability of nondegenerate singularities.
	
	\begin{theorem}[Stability of nondegenerate singularities]\label{ThmStability}
		Assume $\{\bM_{t}\}_{t\in[-1,0)}$ is a mean curvature flow of closed embedded hypersurfaces in $\R^{n+1}$, $(0,0)$ is a nondegenerate cylindrical singularity modeled by $\cC_{n,k}$. Then there is an $\eps_0>0$ such that for all $u:\ \bM_{-1}\to\R$ with $\|u\|_{C^{2}}\leq \eps_0$, the perturbed mean curvature flow with initial condition $\widetilde\bM_{-1}=\mathrm{Graph}\{ x+ u(x)\mathbf n(x)\ |\ x\in \bM_{-1}\} $ admits a nondegenerate singularity in a spacetime neighborhood of $(0,0)$ modeled by $\cC_{n,k}$. 
	\end{theorem}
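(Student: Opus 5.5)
Our plan is to work entirely in the rescaled flow around the singularity. Nondegeneracy then appears as a dynamical stability property: near $\cC_{n,k}$, the only unstable directions are the finitely many positive modes of $L_{\cC_{n,k}}$ (constant, $\theta_\alpha$, $y_j$). These are absorbed by choosing the spacetime center and the scale. The neutral quadratic modes, once they have full-rank coefficient, attract nearby solutions to the same $\frac{\varrho}{4\tau}\sum_i(y_i^2-2)$ profile.

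\textbf{Step 1: set up the unperturbed flow.} Fix a large $\tau_0$ such that $M(\tau_0)$ is a graph of $u_0$ over $\cC_{n,k}\cap B_{K\sqrt{\tau_0}}$. By Theorem~\ref{ThmNF-sqrtt}(1) with $\mathcal I=\{1,\dots,k\}$, $u_0$ is $C\tau_0^{-1-\vartheta}$-close in $H^1$ to $\frac{\varrho}{4\tau_0}\sum_{i=1}^k(y_i^2-2)$, and it is $C^1$-close to the $\sqrt{\cdot}$ profile on the larger region.

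\textbf{Step 2: transfer closeness to the perturbed flow.} By smooth dependence of MCF on initial data over the smooth interval $[-1,-e^{-\tau_0}]$, the perturbed flow $\widetilde\bM_t$ exists up to $t=-e^{-\tau_0}$. At that time it is $C^2$-close to $\bM_{-e^{-\tau_0}}$, with error $o(1)$ as $\eps_0\to0$, for fixed $\tau_0$. Since the rest of $\bM$ stays smooth and away from the singular region for a short while, by pseudolocality or localization we may argue only near $(0,0)$.

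\textbf{Step 3: the shooting argument.} Consider the three-parameter family of recentered and rescaled flows $e^{(\tau)/2}(\widetilde\bM_{T-e^{-\tau}}-X)$, with $(X,T)$ near $(0,0)$ and a rotation in the $y$-factor. Use a Conley/degree (topological shooting) argument, in the style of Colding--Minicozzi and of \cite{SunXue25_GenericI}, to select $(X,T)$ so that the projection onto the unstable modes remains small for all $\tau\ge\tau_0$. The key estimate is a forward-invariance inequality for the Lyapunov--Schmidt decomposition $u=u_++u_0+u_-$. Here $u_+$ consists of the unstable modes, $u_0$ of the quadratic neutral modes plus the rotational zero modes $y_i\theta_\alpha$, and $u_-$ of the stable modes. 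On the neutral part we use the reduced ODE for the coefficient matrix $A(\tau)$ of the quadratic Hermite modes,
\[
\dot A=-cA^2+O(|A|^{3}+\|u_-\|^2),
\]
whose nondegenerate solution $A\sim\frac{1}{c\tau}\mathrm{Id}$ is an attractor within the full-rank cone. A small perturbation of $A(\tau_0)\approx\frac{1}{c\tau_0}\mathrm{Id}$ therefore stays full rank and positive definite, and behaves like $\frac{1}{c\tau}\mathrm{Id}$. Meanwhile $u_-$ decays relative to $A$, and the unstable part is tuned to zero by the choice of $(X,T)$ via a degree argument on a small ball, since the unstable map is a near-identity perturbation.

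\textbf{Step 4: conclude.} Once the tuned flow stays close to $\cC_{n,k}$ for all $\tau$, it has a singularity at $(X,T)$ with cylindrical tangent flow. Since $A$ is positive definite of full rank, Theorem~\ref{ThmNF-sqrtt} rules out alternative (2) and forces $\mathcal I=\{1,\dots,k\}$, so the singularity is nondegenerate.

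\textbf{Main obstacle.} The hardest step is Step 3, specifically controlling the neutral dynamics uniformly. Closeness holds only at rate $\tau^{-1}$, and the graphical region $B_{K\sqrt\tau}$ grows, so the error terms in the ODE must be bounded against $|A|^2$ for all time. This includes the cutoff errors at $|y|\sim K\sqrt\tau$, which must be handled using the $C^1$ profile estimate to control the flow in the intermediate region. We would first try to reuse the a priori estimates from \cite{SunXue25_GenericI} (derived from \cite{CM1}), now applied to the perturbed flow with a bootstrap on $[\tau_0,\infty)$.
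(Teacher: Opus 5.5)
\textbf{The degree step in your Step 3 does not work as justified.} The overall plan is close to the paper's: absorb the positive modes $1,\theta_\alpha,y_j$ by changing the spacetime center and scale, keep the solution in a neutral cone, and conclude from the matrix Riccati equation (Lemma \ref{LmNeutralPersistence}). The paper instead re-centers at every integer time (Proposition \ref{PropCentering}), with the unrescaled corrections summing to a limit $\rmT'_\infty$. Replacing that by a single global shooting in $(X,T)$ would be an acceptable variant.

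The problem is the claim that the map from modulation parameters to unstable coefficients is a near-identity perturbation. That is false in the axial block, which is exactly where the paper's key idea lives. A time shift, and translations in the spherical directions, do act on the constant and $\theta_\alpha$ modes at unit strength. An axial translation, however, acts trivially on $\cC_{n,k}$. It creates a $y_j$-component only through the quadratic part of the graph: translating $y^{\top}Ay-2\operatorname{tr}A$ by $\by$ produces $2(A\by)\cdot y$ plus a constant (Remark \ref{RmkTranslation}, Corollary \ref{cor_change_TDR}). So the linearized map is, up to normalization, $\operatorname{diag}(\varrho,-I,2A)$ with $|A|\sim 1/\tau$. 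It is invertible only because $A$ has full rank. As written, your Step 3 never uses nondegeneracy, so it would apply verbatim to a degenerate or partially nondegenerate singularity, where it breaks down: the $y_j$-modes in degenerate directions cannot be absorbed by any admissible translation.

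\textbf{What the repair needs.} Use the anisotropic parameter ball of Proposition \ref{PropCentering}, $R|\lambda-1|+|\bx|+a|\by|\leq C\xi\|\chi_Ru\|$; in your language, axial shifts are measured in units of $|A|^{-1}$. You must then show that every competing contribution to the $y_j$-projections is $o(a|\by|)$. This includes the translate of the non-quadratic remainder, the nonlinearity, and the cutoff terms. Shifting a remainder $v$ by $\by$ changes its $y_j$-projection by an amount of order $\|v\|\,|\by|$, so mere smallness of the graph is not enough. You need the normal-form precision $\|u-\frac{\varrho}{4\tau}\sum_i(y_i^2-2)\|=O(\tau^{-1-\vartheta})=o(a)$ from \eqref{EqNF}, carried over to the perturbed flow.

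\textbf{Smaller points.}
\begin{itemize}
\item The modulation family is $(n+2)$-dimensional, not three-parameter.
\item A rotation in the $y$-factor alone does not remove the rotational zero modes $\theta_\alpha y_j$, which your neutral ODE ignores. You need the $\mathrm{SO}(n+1)$ gauge of Section \ref{SSSBrendle}.
\item Your error term $O(|A|^3+\|u_-\|^2)$ omits the cross terms of size $|A|\,\|\PiNZ(\chi u)\|$. The quantitative cone condition with rate $\tau^{-\vartheta}$ is what reduces these to the $C\tau^{-\sigma}|A|^2$ bound of \eqref{EqM}. That bound still suffices for the Riccati conclusion.
\end{itemize}
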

	
	We also have the following more quantative version of stability. 
	
	\begin{theorem}\label{ThmQuantativeStability}
		Let $M_{\tau}$, $\tau\in[0,\infty)$, be an RMCF converging to
		$\cC_{n,k}$ in $C^\infty_{\mathrm{loc}}$ and corresponding to a
		nondegenerate singularity of the MCF. There exist $\rho,\eps_1>0$ and
		$T>0$ such that, for every $\tau_0>T$, the following holds. Let
		$v_{\tau_0}$ be supported in $B_{\rho\sqrt{\tau_0}}$ and satisfy
		\[
		\|v_{\tau_0}\|_{H^1}
		\leq\eps_1\|\chi_{\rho\sqrt{\tau_0}}u_{\tau_0}\|_{H^1},
		\qquad
		\|v_{\tau_0}\|_{C^2(B_{\rho\sqrt{\tau_0}})}\leq\eps_1.
		\]
		Then, after a possible translation and dilation, the perturbed flow
		converges to $\cC_{n,k}$ and corresponds to a nondegenerate singularity
		of the MCF.
	\end{theorem}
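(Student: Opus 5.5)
We argue by dynamics of the rescaled flow near $\cC_{n,k}$, reading everything in Gaussian-weighted norms on $B_{\rho\sqrt{\tau}}$. At time $\tau_0$ the unperturbed graph function is, by Theorem~\ref{ThmNF-sqrtt}(1) with $\mathcal I=\{1,\dots,k\}$, $u_{\tau_0}=\frac{\varrho}{4\tau_0}\sum_{i=1}^k(y_i^2-2)+O(\tau_0^{-1-\vartheta})$ in $H^1$. Hence $\|\chi_{\rho\sqrt{\tau_0}}u_{\tau_0}\|_{H^1}\sim\tau_0^{-1}$, and the perturbed datum $\tilde u_{\tau_0}=u_{\tau_0}+v_{\tau_0}$ is a graph over the cylinder on $B_{\rho\sqrt{\tau_0}}$ with $C^2$ norm small. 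We decompose $\tilde u$ spectrally with respect to $L_{\cC_{n,k}}$ into three parts:
\begin{itemize}
\item the unstable part: constants, $\theta_\alpha$, $y_j$;
\item the neutral part: quadratic Hermite polynomials $y_iy_j-2\delta_{ij}$ and the rotation modes $y_j\theta_\alpha$;
\item the stable remainder.
\end{itemize}

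\textbf{Step 1: kill the unstable modes.} This is done by choosing the spacetime center and scale, i.e.\ the ``translation and dilation'' of the statement. Following the standard Brouwer/implicit-function argument (as in the proof of Theorem~\ref{ThmNF-sqrtt}, based on \cite{CM1}), we consider the family of perturbed flows recentred at $(x_0,t_0)$ with $|x_0|,|t_0|$ small. We then choose these parameters so that the projections onto the unstable modes vanish at a later time $\tau_1$. This can be done with parameter size $O(\eps_1\tau_0^{-1})$, since the unstable modes grow like $e^{\tau/2},e^{\tau}$ and the map from parameters to unstable coefficients is a near-identity. We also fix the rotational zero modes $y_j\theta_\alpha$ by an axial rotation of size $O(\eps_1\tau_0^{-1})$.

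\textbf{Step 2: propagate the neutral-mode matrix.} Write the projection onto the quadratic Hermite modes as $\frac{\varrho}{4}\,Y^TA(\tau)Y$, with $A$ a symmetric $k\times k$ matrix. Unperturbed, $A(\tau_0)=\tau_0^{-1}I+O(\tau_0^{-1-\vartheta})$, and the perturbation changes $A(\tau_0)$ by at most $C\eps_1\tau_0^{-1}$. The projected equation is the matrix ODE
$$A'=-A^2+\text{error},$$
coming from the quadratic term of the RMCF nonlinearity. The error consists of the stable remainder, cubic terms and cut-off terms, and is bounded by $O(|A|^{2+\vartheta})$ provided the stable part is slaved, i.e.\ $\|\text{stable}\|\lesssim|A|^{2}$ plus exponentially decaying terms. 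We plan to establish this slaving by a Lyapunov-type comparison for the three-component system, exactly as in the analysis of \cite{SunXue25_GenericI}, with the cut-off error controlled because the graphical radius grows like $\rho\sqrt\tau$.

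\textbf{Step 3: the Riccati dynamics preserves full rank.} The ODE $A'=-A^2$ has solution $A(\tau)=(A_0^{-1}+(\tau-\tau_0)I)^{-1}$. Any $A_0$ with $\|A_0-\tau_0^{-1}I\|\le C\eps_1\tau_0^{-1}$ and $\eps_1$ small is positive definite. Its eigenvalues therefore all behave like $1/\tau$, so the perturbed $A(\tau)\to0$ at full rank. The robustness of this under the error term follows from a cone argument: the region $\{A:\ \lambda_{\min}(A)\ge c\,\lambda_{\max}(A)>0\}$ is forward-invariant up to errors of size $|A|^{2+\vartheta}$. Consequently the perturbed RMCF stays in the graphical region, converges to $\cC_{n,k}$ (note that $\mathcal I$ cannot be empty, since $A\not\to0$ exponentially), and the alternative of Theorem~\ref{ThmNF-sqrtt} realized has $\mathcal I=\{1,\dots,k\}$. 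Equivalently, the singularity is nondegenerate.

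\textbf{Main obstacle.} We expect the main obstacle to be Step 2: controlling the stable and cut-off parts uniformly up to $\tau=\infty$ while the recentring of Step 1 is being chosen. This is a fixed-point argument in which the unstable-mode shooting and the neutral-mode estimate must be closed simultaneously. We would first try to fix a bootstrap regime
$$|A(\tau)|\le 2/\tau,\qquad \|\text{stable}\|\le\tau^{-2+\vartheta},\qquad \|\text{unstable}\|\le\tau^{-2},$$
on $[\tau_0,\tau_1]$, and then use a degree argument in the recentring parameters to force exit only through the unstable directions. Letting $\tau_1\to\infty$ and taking limits of the parameters completes the argument.
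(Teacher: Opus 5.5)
Your overall architecture matches the paper's: remove the unstable modes by recentering in spacetime, then run the matrix Riccati argument on the quadratic Hermite coefficients. However, Step 1 as written fails, and it fails exactly where the paper locates its new difficulty.

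\textbf{The gap in Step 1.} You claim that the map from recentering parameters to unstable coefficients is a near-identity, with all parameters of size $O(\eps_1\tau_0^{-1})$. This is false in the axial directions.
\begin{itemize}
\item A translation along the $\R^k$-factor leaves $\cC_{n,k}$ invariant, so it creates no $y_j$-mode from the cylinder itself.
\item It creates one only by acting on the quadratic part of the profile. Translating $a\sum_i(y_i^2-2)$ by $\by$ produces $2a\,\by\cdot y+a|\by|^2$, with $a\approx\varrho/(4\tau)$ (Remark \ref{RmkTranslation}, Corollary \ref{cor_change_TDR}).
\item Hence the axial block of the Jacobian is $2aI_k\sim\tau^{-1}I_k$, not close to $I_k$.
\end{itemize}
With your parameter bound, axial translations move the $y_j$-coefficients by only $O(\eps_1\tau_0^{-2})$. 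But the hypothesis allows $v_{\tau_0}$ to have $y_j$-components of size $\eps_1\|\chi_{\rho\sqrt{\tau_0}}u_{\tau_0}\|_{H^1}\sim\eps_1\tau_0^{-1}$. The correct center therefore generally lies outside your parameter ball, and a degree argument on that ball cannot close. Appealing to the proof of Theorem \ref{ThmNF-sqrtt} does not help: there the flow is assumed to converge, and no recentering occurs.

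\textbf{What is needed.} This is the content of Proposition \ref{PropCentering}:
\begin{itemize}
\item axial translations of size $O(\eps_1)$ in rescaled units, not $O(\eps_1\tau_0^{-1})$;
\item a degree argument in the weighted norm $R^2|\lambda-1|^2+|\bx|^2+a^2|\by|^2$;
\item error bounds in the $y_j$-components measured against $a|\by|$ rather than $|\by|$.
\end{itemize}
Nondegeneracy is thus used essentially already at this stage. The axial block is invertible only because the quadratic coefficient matrix has full rank. For a partially nondegenerate profile, the $y_j$-modes in the degenerate directions could not be removed by any translation or dilation. In your proposal nondegeneracy enters only in Step 3, which signals that this mechanism has been missed.

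\textbf{How the paper closes it.} The paper recenters at every integer time $j$ with $\xi=j^{-\vartheta}$. The axial corrections are then $O(j^{-\vartheta})$ in rescaled units, i.e.\ $O(e^{-j/2}j^{-\vartheta})$ unrescaled, which are summable. The accumulated loss of graphical radius $\sum_m m^{-\vartheta}$ stays below $\frac{\delta}{2}j^\kappa$ because $1-\vartheta<\kappa$. A one-shot shooting argument like yours could plausibly replace this iteration. It would work only if your bootstrap controls every contribution to the $y_j$-modes relative to this weak axial Jacobian, and you would have to establish that explicitly.
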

	
	\begin{theorem}[Existence of model solutions]\label{ThmModel}
		Given $1\leq k< n$, there exists an $O(n-k+1)\times O(k)$-invariant mean curvature flow with a nondegenerate cylindrical singularity at the origin modeled by $\cC_{n,k}$.  
	\end{theorem}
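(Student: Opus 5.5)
We construct an $O(n-k+1)\times O(k)$-symmetric initial hypersurface that becomes singular only at the origin, at the first singular time, with a cylindrical tangent flow, and then rule out alternative (2) of Theorem \ref{ThmNF-sqrtt} and partial nondegeneracy by symmetry.

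\textbf{Step 1: reduction to a profile.} An $O(n-k+1)\times O(k)$-invariant hypersurface is determined by a curve $\gamma$ in the quarter plane $\{(r,s): r=|\theta|\ge0,\ s=|y|\ge0\}$. We take $\gamma$ to be the profile of a closed, smooth, embedded, mean-convex hypersurface shaped like a ``peanut'' or dumbbell. Concretely, we glue a thin neck $\{r=\delta(1+s^2)\}$, $s\le 1$, to two large caps. We choose the profile so that $r$ increases in $s$ along the neck. This is an $S^{n-k}\times B^k$-type handle whose neck region is centered at $y=0$.

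\textbf{Step 2: first singularity at the origin, modeled by $\cC_{n,k}$.} The flow preserves mean convexity, embeddedness and the symmetry. A Sturmian/maximum-principle argument for the profile equation shows two things: the number of critical points of $r(s)$ along the neck does not increase, and $\partial_s r\ge0$ for $s\ge0$ is preserved, using the reflection $s\mapsto -s$ in each $y$-direction. Hence the minimum of $r$ stays at $s=0$. For $\delta$ small, comparison with shrinking cylinders inside and large barriers outside ensures the neck pinches at $y=0$ before the caps become singular. Mean convexity (White, Haslhofer--Kleiner) then gives a tangent flow at the first singular point $(x_0,T)$ that is a generalized cylinder $\cC_{n,j}$. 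By symmetry $x_0=0$. Both the $O(k)$ invariance in $y$ and the $O(n-k+1)$ invariance in $\theta$ are inherited by the tangent flow, and the only generalized cylinder with this symmetry is $\cC_{n,k}$ with the $\R^k$-axis aligned with $y$. The sphere of radius $0$ in $\R^{n-k+1}$ is excluded because $r$ collapses while $s$ directions remain noncompact at scale. By Colding--Ilmanen--Minicozzi uniqueness, the rescaled flow $M_\tau$ converges to $\cC_{n,k}$ smoothly on compact sets.

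\textbf{Step 3: nondegeneracy.} We apply Theorem \ref{ThmNF-sqrtt}.
\begin{itemize}
\item Partial nondegeneracy is excluded by $O(k)$-invariance. The quadratic coefficient matrix must be $O(k)$-invariant, hence a multiple of the identity. So $\mathcal I$ is either empty or all of $\{1,\dots,k\}$, and no rotation is needed.
\item To exclude the degenerate alternative (2), we show that the neck does not decay exponentially fast to the cylinder on scale $K\sqrt\tau$. I would argue as follows. Because $\partial_s r\ge0$ is preserved, $u(\cdot,\tau)$ is radially nondecreasing in $|y|$ and nonconstant. One option is to compare with the Angenent--Velázquez-type barriers or with known degenerate neckpinch profiles. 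The cleaner route is to use the classification result for mean-convex (noncollapsed) flows with cylindrical blowups. These satisfy the $\sqrt{\tau}$-asymptotic, and the fast-decaying alternative forces the tangent flow at nearby points to be the same cylinder on a whole region. That in turn would produce a singular set of positive $\mathcal H^k$-parabolic measure near $0$, i.e. a marriage-ring-type degenerate pinch. We rule this out by Step 2, since the monotonicity $\partial_s r\ge0$ with strict inequality away from $s=0$ (strong maximum principle) makes $0$ the unique point where $r$ attains its minimum at time $T$.
\end{itemize}

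\textbf{Main obstacle.} The hardest step is this last one: converting ``isolated singular point plus strict monotonicity'' into the failure of exponential decay. I would first try to do it directly. In alternative (2), integrating the exponentially small graph function $u$ over $B_{K\sqrt\tau}$ shows that $M_\tau$ is $Ce^{-c\tau}$-close to the cylinder on a ball of radius $K\sqrt\tau$. Undoing the rescaling, the original flow at time $T-e^{-\tau}$ has $r\approx\varrho e^{-\tau/2}$ uniformly for $|y|\le K\sqrt\tau e^{-\tau/2}$. Pseudolocality-type arguments then propagate this closeness, so that points $(y,T)$ with $|y|$ small but nonzero are also singular. This contradicts the uniqueness of the minimum of $r$ and hence the isolatedness of the singularity.
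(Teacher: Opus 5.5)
Your exclusion of partial nondegeneracy by $O(k)$-invariance is fine. The genuine gap is where you expect it: nothing in your proposal actually rules out alternative (2) of Theorem \ref{ThmNF-sqrtt}, and the mechanism you sketch cannot do it. First, the exponential closeness in alternative (2) holds only on $\cC_{n,k}\cap B_{K\sqrt\tau}$. At unrescaled time $T-e^{-\tau}$ this is the region $|y|\lesssim K\sqrt\tau\,e^{-\tau/2}$, which shrinks to the origin. So for any fixed $y\neq0$ the point $(y,T)$ eventually lies outside it. Pseudolocality propagates closeness, not singularity, so nothing forces $(y,T)$ to be singular. Second, and more fundamentally, the implication you need (degenerate $\Rightarrow$ further singular points nearby, i.e.\ isolated pinch point $\Rightarrow$ nondegenerate) is false. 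As the introduction recalls, the Altschuler--Angenent--Giga peanut \cite{AAG} is a symmetric, mean-convex dumbbell-type surface whose flow has a single singular point with cylindrical tangent flow, and that singularity is degenerate. Your Steps 1--2 do not distinguish your thin-neck dumbbell from such examples. The appeal to a ``classification of mean-convex flows with cylindrical blow-ups'' that would give the $\sqrt\tau$-asymptotics is not an available theorem; the peanut is again a counterexample. Strict monotonicity of $r$ for $t<T$ also does not pass to a strict statement at $t=T$, and it is compatible with degenerate profiles anyway. To make your route work you would need a genuine neckpinch-asymptotics result showing that a sufficiently thin symmetric neck enters the nondegenerate regime, in the spirit of Angenent--Vel\'azquez or Gang--Knopf for $k=1$. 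The proposal supplies nothing of this kind, especially for $k\geq2$.

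The paper avoids the issue by never deriving nondegeneracy from the geometry of a closed surface. Its initial datum is a noncompact, $O(n-k+1)\times O(k)$-invariant global graph $u=f(|y|)$ over $\cC_{n,k}$. Here $f$ equals the nondegenerate normal-form profile $\varrho\sqrt{1+\frac{|y|^2-2k}{2T}}-\varrho$ for $|y|\leq\delta\sqrt T$, is monotone in between, and is constant for $|y|\geq2\delta\sqrt T$. For large $T$ the paper then applies Theorem \ref{thm_existence_nondeg_with_ansatz}, whose iterated centering and matrix Riccati analysis of the quadratic Hermite coefficients yield a nondegenerate singularity. By symmetry the $\theta_\alpha$- and $y_j$-projections vanish, so the centering never translates and the flow stays invariant. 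Nondegeneracy is thus imported from the openness of the nondegenerate regime near the ansatz, which is precisely the ingredient your approach lacks. Your Step 2 also has smaller errors that would need repair. $\cC_{n,k}$ is not the only $O(n-k+1)\times O(k)$-invariant generalized cylinder: for $k\geq2$, $\R^{n-k+1}\times S^{k-1}$ is invariant too. Your exclusion of the round sphere is only heuristic. The ``two caps'' picture makes sense only for $k=1$.
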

	
	The last three theorems will be proved in Section \ref{S_stability}. Previously, Gang-Sigal \cite{GangSigal09_Neck} and Gang-Knopf \cite{GangKnopf15_UniversalityMCF} discussed the formation of neck singularities for mean curvature flows that are close to a neck profile. In particular, they proved that small perturbations will preserve these neckpinch singularities for those flows that are close to the cylinder with a neckpinch. 
	
	The stability of nondegenerate cylindrical singularities is a subtle matter. In fact, the generalized cylinders, $\cC_{n,k}$, are stable at the linear level, but not at the flow level. In \cite{CM1}, Colding-Minicozzi introduced a quantity called \emph{entropy} for hypersurfaces in $\R^{n+1}$, defined by
	\begin{equation*}
		\lambda(\Sigma)=\sup_{x_0\in \R^{n+1}, \tau_0\in(0,\infty)} \frac{1}{(4\pi \tau_0)^{n/2}}\int_{\Sigma}e^{-\frac{|x-x_0|^2}{4\tau_0}}d\cH^n(x).
	\end{equation*}
	This quantity is a Lyapunov functional if we view the mean curvature flow or the rescaled mean curvature flow as a dynamical system. Moreover, Colding-Minicozzi defined a linear stability called \emph{$L$-stability}, and proved that if a shrinker is entropy stable, then it must be $L$-stable. Finally, they prove that only the sphere and generalized cylinders are $L$-stable shrinkers. This is the first major achievement in the study of generic mean curvature flows.
	
	On the other hand, a degenerate cylindrical singularity may not be stable. For instance, the mean curvature flow starting from the ``peanut'' surface constructed in \cite{AAG} has a degenerate cylindrical singularity that can be perturbed into either a spherical or a cylindrical singularity depending on the chosen perturbations. The perturbation can produce a singularity whose singularity model with strictly lower entropy. Therefore, even if $\cC_{n,k}$ are stable as the singularity models, the singularities modeled by them may not be stable.
	
	Our second main result is the following perturbation theorem. Recall that a singularity $(p,T)$ is type-I if there exists a backward parabolic neighborhood such that inside this neighborhood, there exists $C>0$ so that $\bM_t$ satisfies $|A|^2\leq C(T-t)^{-1}$, where $A$ is the second fundamental form of the flow.
	
	\begin{theorem}[Denseness of nondegenerate singularities]\label{ThmGenericity}
		Assume $\{\bM_{t}\}_{t\in[-1,0)}$ is a mean curvature flow of closed embedded hypersurfaces in $\R^{n+1}$, $(0,0)$ is a degenerate cylindrical singularity modeled by $\cC_{n,k}$, and is a type-I singularity. 
		
		Then for any $\eps>0$ and $\delta>0$, there exists $f:\bM_{-1}\to\R$ with $\|f\|_{C^2}\leq \eps$, such that the perturbed mean curvature flow $\{\widetilde{\bM}_{t}\}$ starting from $\widetilde{\bM}_{-1}:=\{x+f(x)\bn:x\in\bM_{-1}\}$ has a singularity in a $\delta$-neighbourhood of $(0,0)$, that is modeled by $\cC_{n,k}$, and is nondegenerate.
	\end{theorem}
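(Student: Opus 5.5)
Since the singularity at $(0,0)$ is degenerate, Theorem \ref{ThmNF-sqrtt} puts us in the second alternative: the rescaled flow $M_\tau$ is a graph of $u(\cdot,\tau)$ over $\cC_{n,k}\cap B_{K\sqrt\tau}$ with $\|u(\cdot,\tau)\|_{H^1}+\|u(\cdot,\tau)\|_{C^1}\le Ce^{-c\tau}$. The plan is to pick a very large time $\tau_0$, where $M_{\tau_0}$ is exponentially close to the cylinder on a huge ball, and replace $M_{\tau_0}$ there by a small graphical perturbation that is close to a \emph{nondegenerate} profile. The stability theory (Theorem \ref{ThmStability}/\ref{ThmQuantativeStability}) then forces a nondegenerate singularity. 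Finally, the perturbation is transported back to time $-1$.

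\textbf{Step 1: a nondegenerate reference flow.} By Theorem \ref{ThmModel} there is an $O(n-k+1)\times O(k)$-invariant flow with a nondegenerate $\cC_{n,k}$ singularity at the origin. Let $\hat M_\tau$ be its rescaled flow and $\hat u(\cdot,\tau)$ its graph function, which satisfies the first alternative with $\mathcal I=\{1,\dots,k\}$. Apply Theorem \ref{ThmQuantativeStability} to $\hat M_\tau$. For every large $\sigma$, any perturbation $v$ supported in $B_{\rho\sqrt\sigma}$ with $\|v\|_{H^1}\le\eps_1\|\chi_{\rho\sqrt\sigma}\hat u_\sigma\|_{H^1}\sim\eps_1\sigma^{-1}$ and $\|v\|_{C^2}\le\eps_1$ still yields, after translation and dilation, a nondegenerate singularity.

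\textbf{Step 2: matching.} Fix $\sigma$ large and take $\tau_0\gg\sigma$ so large that $Ce^{-c\tau_0}\ll\eps_1\sigma^{-1}$ and the graphical radius $K\sqrt{\tau_0}$ exceeds $\rho\sqrt\sigma$. Define the modified time-$\tau_0$ slice $\widetilde M_{\tau_0}$ as the graph over $\cC_{n,k}$ of
\[
\tilde u=u(\cdot,\tau_0)+\chi\,(\hat u(\cdot,\sigma)-u(\cdot,\tau_0))
\]
inside $B_{\rho\sqrt\sigma}$, where $\chi$ is a cutoff, and leave it unchanged outside. Near the origin this slice equals $\hat M_\sigma$ plus the error $v=(1-\chi)(\hat u_\sigma-u_{\tau_0})$. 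Because $u_{\tau_0}$ is exponentially small, the relevant part of $v$ is the cutoff tail of $\hat u_\sigma$, which lives where $|y|\sim\rho\sqrt\sigma$.

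The cutoff tail is the main obstacle. Theorem \ref{ThmQuantativeStability} is phrased with $v$ supported in the ball, so the tail outside it must be controlled separately. Since $\hat u_\sigma$ is $O(1)$ in $C^1$ at scale $\sqrt\sigma$ only by Theorem \ref{ThmNF-sqrtt}, I would instead cut off at a radius $\rho'\sqrt\sigma$ with $\rho'<\rho$ small. There the Gaussian weight $e^{-\rho'^2\sigma/4}$ makes the $H^1$ cost negligible relative to $\sigma^{-1}$, and the $C^2$ size is $O(\rho'^2)$, which is small.

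The localization must also be justified. The flow of $\widetilde M_{\tau_0}$ agrees with $M_\tau$ far out, and what happens near the origin must be governed by the stability result. I would use pseudolocality together with the type-I hypothesis: the type-I bound gives uniform curvature control on $\bM_t$ away from the parabolic neighborhood, so the perturbed flow stays graphical and smooth outside $B_{\rho\sqrt\sigma}$ long enough for the argument of Theorem \ref{ThmQuantativeStability} (a Lyapunov–Schmidt/contraction argument in the Gaussian space) to run.

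\textbf{Step 3: pulling back to time $-1$.} The singularity of the modified flow lies near $(0,0)$ within $\delta$, since $\tau_0$ is large and the translation/dilation parameters from Theorem \ref{ThmQuantativeStability} are small. The $C^2$ size of the unrescaled perturbation at time $t_0=-e^{-\tau_0}$ is about $e^{-\tau_0/2}$ times the rescaled size, but what is needed is the perturbation at $t=-1$. For this I would use continuous dependence of MCF on initial data on $[-1,t_0]$, where the flow is smooth, combined with a degree argument. Consider the finite-dimensional family of initial perturbations $f=\sum a_j\phi_j$ at time $-1$, with $\phi_j$ chosen so that the induced map $a\mapsto$ (projection of the slice at $t_0$ onto the positive/neutral modes and the $y_iy_j$ Hermite modes) is a local diffeomorphism near $0$; backward uniqueness and unique continuation make this linear map nondegenerate. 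The map is open, so it hits a target whose quadratic part is as in Step 2, up to controllable error. Stability (Theorem \ref{ThmQuantativeStability}) then absorbs the remaining components. Choosing $\tau_0$ large makes $\|f\|_{C^2}\le\eps$.
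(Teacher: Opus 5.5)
\textbf{Main gap: Step 3 cannot be carried out with your order of quantifiers.} The localization worry in Step 2 is minor, since Theorem \ref{thm_existence_nondeg_with_ansatz} already needs only the profile on a ball plus global curvature and entropy bounds. The real problem is that you never justify producing the modified slice at time $\tau_0$ from a $C^2$-small perturbation at $t=-1$.

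You fix $\sigma$ first and take $\tau_0\gg\sigma$, so that the background $e^{-c\tau_0}$ is negligible against the installed profile of size $\sim\sigma^{-1}$. The target is therefore at a \emph{definite} distance from the unperturbed slice.

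The only mechanism for prescribing shapes at a late time is Lemma \ref{LmHV} combined with the linearization estimate $\|\eta\tilde\zeta-\eta\bar\zeta\|_{C^2}\le C\eta^{1+\sigma_0}$. Here $C$ depends on $\tau_0$ and on the uncontrolled size of the preimage $\zeta_0$, so this reaches only perturbations of size $\eta\to0$ at a fixed time. Your inverse-function/open-map argument likewise gives a neighbourhood of uncontrolled radius. Nothing guarantees:
\begin{itemize}
\item that this neighbourhood contains a target at distance $\sim\sigma^{-1}$;
\item that the preimage has $\|f\|_{C^2}\le\eps$ (smallness of the unrescaled target at $t_0$ says nothing about the $C^2$ size of a preimage at $t=-1$; also, under the parabolic rescaling second derivatives scale by $e^{+\tau_0/2}$, not $e^{-\tau_0/2}$);
\item that the infinitely many uncontrolled components meet the hypotheses of Theorem \ref{ThmQuantativeStability}, including the unweighted $C^2$ bound near $|y|\sim\rho'\sqrt\sigma$.
\end{itemize}
Reversing the quantifiers ($\tau_0$ first, then $\sigma^{-1}$ small enough to be reachable) breaks the matching. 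Then $\sigma^{-1}\ll e^{-c\tau_0}$, the ball $B_{\rho'\sqrt\sigma}$ leaves the graphical region, and no stability theorem applies to the sum.

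\textbf{The missing idea: let the flow do the amplification.} The paper fixes $T$ and installs, via Lemma \ref{LmHV}, only an infinitesimal neutral perturbation $\eta q_\ell$ in the degenerate directions. It then studies the difference $w$ between the perturbed and unperturbed flows. The neutral block of $w$ obeys the matrix Riccati system \eqref{EqODEDense}. On $[T,S]$ with $S=T+c_*\eta^{-1}$, its eigenvalues behave like $\eta/(1+\gamma\eta s)$, so they reach the nondegenerate scale $\sim 1/S$ while the background becomes negligible.

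Over such a long interval the modes $1,y_j$ would amplify any error by roughly $e^{c/\eta}$, so they cannot be fine-tuned through the initial data. Instead the paper uses the $(k+1)$-parameter family of spacetime translations and dilations. These are exact symmetries, cheap at $t=-1$, and they generate exactly those modes at time $T$. A retraction/degree argument then selects a member that stays in the neutral cone up to $S$.

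The type-I hypothesis enters through the maximum-principle bound of Lemma \ref{LmCM313}, which controls the cutoff error over this long interval (Lemma \ref{LmE}); it is not used for localization. Only at time $S$, when the full neutral matrix is positive definite of size $\sim1/S$, is the centering/stability argument of Section \ref{S_stability} applied.
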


	The type-I assumption is a technical requirement in our proof. We expect that it is not necessary, but we do not have a proof here.
	
	On the other hand, the assumption is valid in particular for the example of the marriage ring, and more generally, ``thin tube". Recall that the marriage ring is a rotationally symmetric mean curvature flow in $\R^{n+1}$ whose singular set is given by a rotationally symmetric $S^1$, and each singularity is modeled by $\cC_{n,1}$. Conjecturally, for any closed $C^1$-curve $\gamma$, there exists a mean curvature flow $\{\bM_t\}_{t\in[0,t_0)}$ such that $\bM_{0}$ is the boundary of a tubular neighbourhood of $\gamma$, and $M_t$ collapses to a singular set $\gamma$. The authors heard the following conjecture from Colding-Minicozzi.
	\begin{conjecture}[Colding-Minicozzi]
		Any smooth curve $\gamma\subset\R^3$ can be the singular set of a $($mean convex$)$ mean curvature flow. 
	\end{conjecture}
	
	We refer to such a mean curvature flow as a \emph{thin tube}. White \cite[Section 5]{Wh2} conjectured that mean curvature flows in $\R^3$ can only have the singular set consisting of isolated singularities and curves.
	
	The singular set must have certain regularity. In  \cite{CM3}, Colding-Minicozzi proved that the singular set of cylindrical singularities in $\R^3$ is contained in a finite union of $C^1$-curves and countably many isolated points. In \cite{SunWangXue1_Passing}, joint with Zhihan Wang, we proved that the singular set of cylindrical singularities in $\R^3$ is contained in a $C^{2,\alpha}$-curve and countably many isolated points. Both \cite{CM3, SunWangXue1_Passing} have analogous results for higher-dimensional cases.
	
	We show that when the singular set is a smooth closed curve, even a small initial perturbation can lead to an isolated first-time singularity. This result holds true for various scenarios, including the example of a marriage ring. Heuristically, we can create an isolated singularity by slightly squeezing the marriage ring.
	
	\begin{theorem}\label{ThmMarriageRing}
		Suppose $\{\bM_t\}_{t\in[-1,0)}$ be a smooth embedded mean curvature flow in $\R^{n+1}$ and the singular set at time $0$ is a $C^{2,\alpha}$-closed curve $\gamma$, and the singularities are modeled by $\cC_{n,1}$. Then for any $p\in \gamma$ and any $\eps_0>0$ there exists a function $u_0\in C^{2,\alpha}(\bM_{-1})$ with $\|u_0\|_{C^{2,\alpha}}<\epsilon_0$ such that the perturbed MCF $\{\widetilde{\bM}_t\}$ starting from $\widetilde{\bM}_{-1}:=\{x+u_0(x)\bn(x):x\in\bM_{-1}\}$ has a single first-time singularity in a small neighborhood of $p$.
	\end{theorem}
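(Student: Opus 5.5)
The plan is to reduce Theorem \ref{ThmMarriageRing} to Theorem \ref{ThmGenericity} and Theorem \ref{ThmStability}, and to use a localized "squeezing" perturbation near $p$ to make the flow become singular first near $p$, before any other point of $\gamma$.

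First, I would show that every point of $\gamma$ is a degenerate singularity modeled by $\cC_{n,1}$. By the isolation property of nondegenerate singularities recalled in the introduction, no point of $\gamma$ can be nondegenerate. With $k=1$, partial nondegeneracy is impossible, so each point is degenerate. I would also check the type-I hypothesis. Since $\gamma$ is a $C^{2,\alpha}$ closed curve, I expect that near each point of $\gamma$ the flow is, after rescaling, uniformly close to the cylinder along the curve, with the exponential convergence of alternative (2) of Theorem \ref{ThmNF-sqrtt}. That convergence gives curvature bounds $|A|^2\le C/(-t)$ in a backward parabolic neighborhood of each point of $\gamma$, uniformly by compactness of $\gamma$.

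Second, I would construct $u_0$ concretely as an inward bump supported in a small neighborhood of the portion of $\bM_{-1}$ that flows into $p$. I would choose $u_0=-\eta\,\phi$, where $\phi\ge0$ is a cutoff supported near the preimage of $p$ and $\eta\ll\epsilon_0$. By the comparison principle the perturbed flow lies inside the original one. Near $p$, where the bump pushes the tube inward, the neck radius becomes strictly smaller. Away from the support of $\phi$, stability of the flow on compact time intervals keeps the perturbed flow $C^\infty$-close to the original.

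The key quantitative step is the following. Let $T_p(\eta)$ be the first singular time of the perturbed flow. I would show that at time $T_p(\eta)$ the neck radius away from a small neighborhood $U$ of $p$ is still bounded below by a positive constant. To do this, I would compare the neck radius function along $\gamma$, which for the original flow is uniformly comparable to $\sqrt{-2t}$, with the perturbed radius. By the strong maximum principle applied to the distance between the perturbed and original flows, the inward perturbation produces a strict gap at time close to $0$ near $p$. That gap gives $T_p(\eta)<0$, while outside $U$ the two flows remain $o(\eta)$-close. Hence the first singular time is attained only inside $U$.

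To obtain a \emph{single} singularity inside $U$, I would then apply Theorem \ref{ThmGenericity} at the first singular point found near $p$. This produces a further arbitrarily small perturbation making that singularity nondegenerate, and nondegenerate singularities are isolated in spacetime. By Theorem \ref{ThmStability}, the singularity persists under the remaining small adjustments, and the first-time singular set near $p$ consists of that one point. The total perturbation can be kept below $\epsilon_0$ in $C^{2,\alpha}$.

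The main obstacle will be to ensure that the first singular time occurs near $p$ and that all other first-time singularities in $U$ are excluded. This requires a uniform, quantitative description of the flow along the whole curve $\gamma$, namely uniform type-I and uniform rates. I would first try to get it from compactness of $\gamma$ together with Theorem \ref{ThmNF-sqrtt}, applied with constants made uniform by a contradiction-compactness argument.
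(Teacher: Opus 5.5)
Your proposal has a genuine gap at its key quantitative step: an inward bump $-\eta\phi$ at time $-1$ does not localize the first pinch near $p$. By the strong maximum principle you invoke, the displacement between the perturbed and original flows becomes strictly positive along the whole tube. At every fixed later time it equals $\eta$ times a positive function, up to $o(\eta)$, so it is not $o(\eta)$ outside $U$.

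Worse, your claimed positive lower bound on the neck radius away from $U$ at time $T_p(\eta)$ is false. We have $T_p(\eta)\to0$ as $\eta\to0$, and since the perturbed flow lies inside the original one, every neck along $\gamma$ has radius at most about $\sqrt{2(n-1)|T_p(\eta)|}$ at that time. Which neck pinches first is decided by an order-$\eta$ competition between the time-shift (dilation-mode) components of the late-time perturbation at different points of $\gamma$. A bump placed at time $-1$ gives you no control over where that competition is won. In addition, the linearized picture breaks down in the final stretch, where the perturbation is no longer small relative to the neck.

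The missing idea is to control the perturbation's profile at a \emph{late} time. The paper does exactly this. Using the density Lemma \ref{LmHV} of the linearized flow, it realizes at time $-e^{-T}$ a profile that equals the nondegenerate pinching ansatz near $p$ and an outward constant shift (a slightly fatter cylinder) away from $p$, with a smooth transition. Theorem \ref{thm_existence_nondeg_with_ansatz} then gives a nondegenerate singularity near $p$, isolated by Theorem 1.7 of \cite{SunXue25_GenericI}. The pseudolocality Lemma \ref{lem:plcyl} excludes singularities elsewhere: it is applied in the transition region, and to the outer region viewed as an unperturbed graph over the fatter cylinder.

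Your second stage does not close either. To invoke Theorem \ref{ThmGenericity} at the first singular point of the perturbed flow, you need that point to be a degenerate $\cC_{n,1}$ singularity and the \emph{perturbed} flow to be type-I there. Neither is checked, type-I of the original flow does not transfer, and a priori the new singularity could be spherical. Even for the original flow, alternative (2) of Theorem \ref{ThmNF-sqrtt} only controls a ball around one blow-up center and does not by itself give a type-I bound; the paper uses Lemma \ref{lem_C2_type-I} instead.

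Furthermore, Theorems \ref{ThmGenericity} and \ref{ThmStability} only produce or preserve \emph{a} nondegenerate singularity near a point. They say nothing about other singular points at the same or earlier times, nor that the first singular time still occurs only near $p$ after the extra perturbation. So ``a single first-time singularity'' needs a separate exclusion argument of the kind the paper supplies. In the paper's route, Theorem \ref{ThmGenericity} is not used at all, because the nondegenerate pinch is implanted directly.
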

	
	Finally, we discuss a possibly existing conjectural example called \emph{firecracker}. Imagine we can perturb the initial data of the mean curvature flow to produce a nondegenerate singularity from an existing degenerate singularity. If we see another degenerate singularity, we repeat this process. This process may never terminate. Motivated by this issue, we introduce the following notion:
	\begin{definition}\label{def:firecracker}
		A mean curvature flow $\bM$ is said to have a \emph{$($firecracker$)$} singular set, if the first-time singular set of $\bM$ contains a degenerate singularity (referred to as a ``firecracker'') that is the limit of a sequence of nondegenerate spacetime singularities.
	\end{definition}
	
	At this moment, there is no example of such a singular set. On the other hand, recently Simon \cite{Si} constructed examples of minimal hypersurfaces with any given prescribed compact singular set. It is conceivable that a parabolic version of Simon's construction would yield a firecracker singular set for mean curvature flow. On the other hand, the firecracker singular set seems to be a highly unstable object. Therefore, we make the following conjecture.
	
	\begin{conjecture}
		The firecracker singular set exists in mean curvature flow, but does not exist generically.
	\end{conjecture}
	
	It would also be interesting to compare the firecracker with another conjectural example in \cite{ChoiHaslhoferHershkovits21_NoteSelfLimitFlow}, where Choi-Haslhofer-Hershkovits conjectured that there can not be a sequence of spherical singularities converging to a cylindrical singularity.

	\subsection{Generic level set flows}
	Let us discuss another application of the genericity of nondegenerate singularities. Level set flow is a weak formulation of mean curvature flow, where the evolution of a hypersurface $\bM_t$ is represented by the level sets of a function $f(x,t)$ satisfying the equation
	\[
	\pr_t f=|\nabla f|\dv\left(\frac{\nabla f}{|\nabla f|}\right).
	\]
	The level set flow PDE was first proposed by Osher-Sethian \cite{OS} in the context of numerical analysis. Later, Chen-Giga-Goto \cite{CGG} and Evans-Spruck \cite{ES} proved the existence of a viscosity solution to the level set flow PDE. If $f(x,t)$ is a solution to the level set flow PDE and $|\nabla f(x,t)|\neq 0$, then $\bM_t:=\{x:f(x,t)=0\}$ is a smooth MCF when $\{\bM_t\}$ is a family of codimension $1$ hypersurfaces. Consequently, one can define a weak solution to MCF as the level sets of a level set flow function. When $\bM_0:=\{x:f(x,0)=0\}$ is mean convex, the MCF evolves monotonically. This allows one to introduce an \emph{arrival time} function $g(x):=f(x,t)+t$, where $g$ satisfies the equation
	$
	-1=|\nabla g|\dv\left(\frac{\nabla g}{|\nabla g|}\right).
	$
	
	The regularity of the LSF is a natural question in the context of PDE. Huisken \cite{Hu2} proved that the arrival time function of a convex hypersurface has at least $C^2$ regularity, and later Sesum \cite{Se} constructed examples with at most $C^3$ regularity. In our previous work \cite{SX3}, we proved that for a generic convex hypersurface, the arrival time function does not have $C^3$ regularity.
	
	When the initial hypersurface is mean convex but not convex, the regularity becomes more subtle due to the possible cylindrical singularities. Colding-Minicozzi \cite{CM4, CM5} studied the relationship between the structure of the singular set of mean curvature flow and the regularity of the arrival time function. Specifically, they showed that the regularity of the arrival time function must be at least $C^{1,1}$, and the regularity is $C^2$ if and only if the singular time is unique and the singular set is a $C^{2,\alpha}$-submanifold. In this paper, we prove a generic regularity result for the arrival time function.
	\begin{theorem}\label{thm:main thm LSF}
		Let $\cV$ be the space of mean-convex closed surfaces in $\R^{3}$ with the $C^r$ topology, $r\geq2$.  Assume that the sets $\cU_1$ and $\cU_2$ defined below are open and that $\cU_1\cup\cU_2$ is {\color{red}dense} in $\cV$.  Then $\cU=\cU_1\cup\cU_2$ has the following properties:
		\begin{itemize}
			\item for every initial surface in $\cU_1$, the arrival time is $C^2$ but not $C^3$, and the flow has a single isolated spherical singularity;
			\item for every initial surface in $\cU_2$, the arrival time is $C^{1,1}$ but not $C^2$, and the flow has an isolated nondegenerate cylindrical singularity.
	\end{itemize}\end{theorem}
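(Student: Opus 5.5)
The plan is to treat the two pieces $\cU_1$ and $\cU_2$ separately. Since openness of $\cU_1,\cU_2$ and density of their union are part of the hypothesis, what remains is to verify the regularity claims for the arrival time. For a mean convex closed surface in $\R^3$, the flow is noncollapsed. By the results of White and Colding--Minicozzi, every singularity is then either spherical or modeled by $\cC_{2,1}$, so for $n=2$ only $k=1$ occurs.

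\emph{The case $\cU_1$.} The flow has a single isolated spherical singularity, so the singular set consists of one point at one time. This is a $C^{2,\alpha}$-submanifold, and by the Colding--Minicozzi criterion \cite{CM4, CM5} the arrival time is $C^2$.

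To rule out $C^3$, I would use \cite{SX3}. Locally near an isolated spherical singularity the flow is convex for times close to the singular time, so the local analysis of \cite{SX3} applies. That analysis shows that $C^3$ regularity forces a vanishing condition on the coefficient of the leading nontrivial mode in the rescaled flow. Since $C^3$ regularity of $g$ is a local property, failure of $C^3$ at the singular point suffices. The vanishing condition fails on an open dense set, so I would either build it into the definition of $\cU_1$ or argue that it is already implied by membership in $\cU_1$.

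\emph{The case $\cU_2$.} The flow has an isolated nondegenerate cylindrical singularity. The arrival time is always $C^{1,1}$ by \cite{CM4}. To show it is not $C^2$, I would use the asymptotics of Theorem \ref{ThmNF-sqrtt}(1) with $\mathcal I=\{1\}$. The profile $\varrho(\sqrt{1+(y^2-2)/(2\tau)}-1)$ shows that the neck pinches at an isolated point rather than along a curve. Indeed, isolatedness of nondegenerate singularities in backward and forward neighborhoods is established in \cite{SunXue25_GenericI, SunWangXue1_Passing}.

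Near this point, the Hessian of $g$ at the singular point is obtained as the limit of $\nabla^2 g$ along the flow. In the cylinder direction the limit equals the $\cC_{2,1}$ value, namely $-\frac{1}{2}$ times the projection onto the sphere directions (up to normalization). Along the $y$-axis, however, the nondegenerate quadratic term makes the level sets cap off. Approaching the singular point from the regular region at different angles, $\nabla^2 g$ then has different limits: one limit is the cylindrical Hessian, and the other is distorted by the $y^2/\tau$ correction in the $y$-direction. Hence $\nabla^2 g$ is discontinuous.

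A cleaner route to the same conclusion is the Colding--Minicozzi characterization: $g\in C^2$ if and only if the singular time is unique and the singular set is a $C^{2,\alpha}$ submanifold whose tangent directions agree with the axes of the cylinders. An isolated point is a $0$-dimensional submanifold, but the axis of a $\cC_{2,1}$ singularity is one-dimensional. This dimension mismatch with the cylindrical axis direction contradicts $C^2$ regularity, since at a cylindrical singularity $C^2$ regularity forces the singular set to contain a curve tangent to the axis.

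The main obstacle is making this last point precise. Specifically, it must be shown that $C^2$ regularity of $g$ at a cylindrical singular point forces a curve of singular points tangent to the axis, or equivalently that the Hessian along the axis cannot be continuous there. I would first try to invoke the precise form of the criterion in \cite{CM5}. If that is insufficient, I would compute directly with the asymptotic profile from Theorem \ref{ThmNF-sqrtt} to exhibit two different limits of $\nabla^2 g$.
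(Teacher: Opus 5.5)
Your proposal is correct. For $\cU_1$ it matches the paper: the paper builds the \cite{SX3} genericity condition into the definition of $\cU_1$ (the spherical singularity is ``the most generic one in the sense of \cite{SX3}''), as you anticipated, and then quotes \cite{SX3}. The paper also sketches openness (via \cite{SX3} and Theorem \ref{ThmStability}) and density (via Lemma \ref{lem_C2_type-I} and Theorem \ref{ThmGenericity}), but the statement lets you assume these.

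For $\cU_2$ you violate a different condition of the same Colding--Minicozzi criterion. The paper (Lemma \ref{lem:2times}) breaks \emph{uniqueness of the singular time}. By the backward isolatedness of \cite{SunXue25_GenericI}, $\bM_0$ is smooth in $B_\delta(0)\setminus\{0\}$, so the flow is not extinct at time $0$. Since a closed flow must eventually become extinct, there is a second singular time, and \cite{CM5} rules out $C^2$. You instead break the \emph{submanifold condition}. Under \cite{CM5}, in the form the paper itself uses in Lemma \ref{lem_C2_type-I} and the remark after it, $C^2$ forces the singular set to be a closed connected $k$-dimensional $C^1$ submanifold with tangent flow $\cC_{n,k}$ at each point. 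With $k=1$ at the origin, that is a curve through an isolated point, which is impossible. So the ``main obstacle'' you flag is simply the ``only if'' direction of \cite{CM5} and can be cited directly.

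As written, your route needs full spatial isolatedness, hence the forward result of \cite{SunWangXue1_Passing}. You can avoid that by using both conditions together: if $g\in C^2$, the singular time is unique, so every singular point occurs at time $0$. Backward isolatedness then already excludes a singular curve through the origin. The paper's route is slightly more economical, needing only backward isolatedness plus non-extinction.

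Your direct Hessian sketch is unnecessary, and the mechanism it proposes is wrong. On the part of the neighborhood swept before time $0$, the flow is locally, in scale-invariant terms, $o(1)$-close to a round shrinking cylinder. So the $y^2/\tau$ correction only perturbs $\nabla^2 g$ by $o(1)$ and cannot produce two different limits. What actually obstructs $C^2$ is the part of the flow that survives past time $0$, which is exactly what the paper captures through the second singular time.
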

	
	The subspace $\cU_1$ is studied in \cite{SX3}. In this paper, we only focus on the space $\cU_2$. We would like to highlight that flows in $\cU_2$ may not have nondegenerate singularities; they may have degenerate singularities, but the singular set is not a smooth submanifold.

	It would also be interesting to know the relation between $\cU_1$ and $\cU_2$. The peanut example by Altschuler-Angenent-Giga \cite{AAG} suggests that there exist hypersurfaces that belong to both closures $\overline{\cU_1}$ and $\overline{\cU_2}$, showing that $\overline{\cU_1}$ and $\overline{\cU_2}$ are not disjoint.

	\subsection{Outline of the proof}
	
	The methodology used in this work is substantially more challenging and distinct from the one-sided perturbation approach presented in \cite{CM1, CCMS, SX1, SX2}. To illustrate this point, let us consider an example. Suppose $M$ is an $m\times m$ matrix with distinct eigenvalues $|\lambda_1| > |\lambda_2| > \ldots > |\lambda_m|$. For a generic vector $v$, we have $\lim_{n\to\infty} \lambda_1^{-n}({M^n v})$ converges to $v_1$, the eigenvector corresponding to $\lambda_1$. However, to obtain $v_2$, the eigenvector corresponding to $\lambda_2$, we need to consider $\lim_{n\to\infty} \lambda_2^{-n}({M^n v})$, where $v$ is generic in $\R^n/(\R v_1)$. Therefore, obtaining the eigenvector corresponding to an eigenvalue other than the first eigenvalue from a generic vector is much harder.
	
	In the context of generic MCF, we analyze the linearized operator $L_\Sigma=\Delta-\frac{1}{2} x\cdot\nabla+(|A|^2+\frac{1}{2})$ restricted to the shrinker.  When dealing with nonspherical and noncylindrical shrinkers, our approach in \cite{SX1, SX2} involves introducing an initial positive perturbation to increase the difference between the perturbed RMCF and the unperturbed one towards the direction of the first eigenfunction, analogous to finding $v_1$ in the previous paragraph.
	
	However, when dealing with spherical and cylindrical shrinkers, the first several eigenfunctions correspond to translations and dilations, which do not provide new geometric evolution. Instead, new geometric information is hidden in some higher Fourier modes. Hence, the situation is analogous to finding $v_2$ in the previous paragraph, and we need to consider the higher eigenfunctions modulo the conformal linear transformations\footnote{By conformal linear transformations, we mean the group of tranformations generated by rigid transformations in $\R^{n+1}$ (including translations and rotations) and dilations. }. In \cite{SX3}, when dealing with spherical singularities, we deciphered the information of the regularity of the LSF lying in the eigen direction of the first negative eigenvalue. We then introduced a centering map to effectively modulo the translations and dilation and ensure that an initial perturbation eventually grows towards the eigen direction of the first negative eigenvalue.
	
	However, dealing with cylindrical singularities presents much greater complexity. In this case, there exist functions $h_2(y_i)=c_2(y_i^2-2)$ that lie in the kernel of $L_\Sigma$ and play a crucial role in determining the isolatedness of the singularity. Our objective is to guide our initial perturbation towards $h_2$ since it represents the essential Fourier mode. One might initially consider that eliminating the translations and dilations would be sufficient. However, there are also eigenfunctions $\{y_i\}_{i=1}^k$ associated with a positive eigenvalue $\frac{1}{2}$ that cannot be eliminated by a conformal linear transformation in general. This observation is a key point in the proof of $F$-instability of cylinders as shown in \cite{CM1}. 
	
	To address this difficulty, we need to understand the geometric significance of the eigenfunctions $y_i$. The key insight is that these eigenfunctions correspond to translations along the axis direction and can be eliminated by translation if there is a nontrivial $h_2$-component in the normal form (see Remark \ref{RmkTranslation}). Thus, we encounter a situation where the elimination of $y_i$ and the revelation of $h_2$ are intertwined. In the proof, we introduce a dynamic procedure that involves infinitely many steps of conformal linear transformations along the RMCF to eliminate all the exponentially growing modes progressively.

	\subsection*{Organization of the paper}
	Section \ref{S:Preliminaries} consists of some preliminaries; Section \ref{S_stability} discusses the stability of nondegenerate cylindrical singularities; Section \ref{SDense} discusses how to perturb a degenerate cylindrical singularity to a nondegenerate one under the technical assumptions; Section \ref{SApplication} discusses some applications of the perturbation results.

	 This is the second part of ver 1 of https://arxiv.org/abs/2210.00419v1, with more details. Much progress has been made on this problem since 2022, and it is hard for us to keep track of it all; to reflect the original ideas in this work, we decided to keep our original references

	\subsection*{Acknowledgment}
	We would like to thank Professor Bill Minicozzi for the stimulating discussions, and Professor Natasa Sesum for her interests and comments. We thank Zhihan Wang for his valuable suggestions and comments on the early version of this draft. J. X. is supported by NSFC grants (No. 12271285) in China, the New Cornerstone investigator program and the Xiaomi endowed professorship of Tsinghua University.
	
	\subsection*{AI declaration}
	The main ideas, frames, and first version of this work were completed in 2022. Generative AI was only used to check computational correctness and improve the presentation.

	\section{Preliminaries}\label{S:Preliminaries}
	
	\subsection{Eigenvalues and Eigenfunctions of the $L$-operator}\label{SPre}
	In this section we summarize some previously known results on cylindrical singularities. We will be working in the weighted Sobolev space. Given two functions $f,g$ defined on a hypersurface $\Sigma$, we define an inner product
	$
	\langle f,g\rangle_{L^2} =\int_{\Sigma}f(x)g(x)e^{-\frac{|x|^2}{4}}d\cH^n(x).
	$
	Then we define the weighted $L^2$-norm by
	$\|f\|_{L^2}=\langle f,f\rangle_{L^2}^{1/2},$
	and the weighted $L^2(\Sigma)$ space consists of function $f$ with $\|f\|_{L^2}<+\infty$. Similarly, we define the weighted higher Sobolev space $H^{k}$ with
	$
	H^{k}(\Sigma):=\left\{f: \sum_{i=0}^k \|\nabla^i f\|^2_{L^2}<+\infty\right\}.
	$
	Throughout the paper, for simplicity of notation, we use $\|\cdot\|$ to denote the $H^1$-norm unless otherwise mentioned. We also use $d\mu:=e^{-\frac{|x|^2}{4}}d\cH^n(x)$

	The first three eigenvalues and their corresponding eigenfunctions of $-L_{\cC_{n,k}}$ are listed in Table \ref{TableEigen}. We will use $\Pi_{\lambda_j}$ to denote the $L^2$-projection of a function to the eigenspace spanned by eigenfunctions with eigenvalue $\lambda_j$. These eigenfunctions have geometric meanings: 
	\begin{itemize}
		\item Constant $1$ is the mean curvature on the generalized cylinder, representing infinitesimal (spacetime) dilation.
		\item $\theta_i$ and $y_j$ are both infinitesimal translations. Specifically, $\theta_i$'s represent the translations in the directions of the spherical components, and $y_i$'s represent the translations in the axis directions (see Remark \ref{RmkTranslation}).
		\item $\theta_i y_j$ represents the infinitesimal rotation.
		\item $h_2(y_i)=c_2(y_i^2-2)$ is known to be the non-integrable Jacobi field. It represents \emph{non-degenerate neckpinching} in the related direction on the axis. 
		\item $y_iy_j$'s show up if we rotate $h_2(y_i)$'s in the $\R^k$ space. e.g. $\left(\frac{y_i+y_j}{\sqrt2}\right)^2-2=\frac{y_i^2-2}{2}+\frac{y_j^2-2}{2}+y_iy_j$.
	\end{itemize}
	
	The spectrum of $-\Delta_{\bS^{n-k}(\varrho)}$ is $0,1/2,\frac{n-k+1}{n-k},\cdots$, and the eigenfunctions are known to be the restriction of homogeneous harmonic polynomials. The first several eigenfunctions are listed as follows: constant functions for eigenvalue $0$; $\theta_i$, the restriction of linear functions in $\R^{n-k+1}$ to $\bS^{n-k}(\varrho)$, for eigenvalue $1/2$; and $\theta_i^2-\theta^2_j,\cdots$ for eigenvalue $\frac{n-k+1}{n-k}$. The spectrum of $-L_{\R^k}$ on $\R^{k}$ is given by half integers $\frac{m}{2}-1,\ m=0,1,2,\ldots$, and the eigenfunctions for eigenvalue $\frac{m}{2}-1$ are given by $h_{m_1}(y_1)\cdots h_{m_k}(y_k)$ with $m_1+\ldots+m_k=m$, where $h_{m_i}(y_i)=c_{m_i}\tilde h_{m_i}(y_i/2)$, and $\tilde h_{m_i}$ are standard Hermite polynomials with $c_{m_i}=2^{-m_i/2}(4\pi)^{-1/4}(m_i!)^{-1/2}$ are normalizing factors such that $\|h_{m_i}\|_{L^2(\R)}=1$. In particular, we have $$\tilde h_0(x)=1,\ \tilde h_1(x)=2x,\ \tilde h_2(x)=4x^2-2 \mathrm{\ and\ } c_0=(4\pi)^{-1/4}, c_1=4^{-1/2}\pi^{-1/4}, c_2=2^{-2}\pi^{-1/4}.$$ We shall use the following fact (c.f. Appendix B of \cite{HV2}).
	\begin{lemma}\label{LmTriple}
		Let $A_{m,n,\ell}=\int_\R h_m(x)h_n(x)h_\ell(x)e^{-\frac{|x|^2}{4}}dx.$ Then $A_{m,n,\ell}=0$ unless we have $m+n+\ell$ is even and $n\leq m+\ell,\ m\leq n+\ell,\ \ell\leq m+n$, in which case we have $$A_{m,n,\ell}=(4\pi)^{-1/4}(m!n!\ell!)^{1/2}\left(\left(\frac{m+n-\ell}{2}\right)!\left(\frac{n+\ell-m}{2}\right)!\left(\frac{m+\ell-n}{2}\right)!\right)^{-1}.$$ In particular, we have $A_{2,2,2}=2\pi^{-1/4}=8c_2.$
	\end{lemma}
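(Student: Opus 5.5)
The plan is to reduce the triple integral to a generating-function computation. Recall that $h_m(x)=c_m\tilde h_m(x/2)$ and $\tilde h_m$ are the physicists' Hermite polynomials. Substituting $x=2s$ turns the weight $e^{-|x|^2/4}$ into $e^{-s^2}$, so that
\[
A_{m,n,\ell}=2c_mc_nc_\ell\int_\R \tilde h_m(s)\tilde h_n(s)\tilde h_\ell(s)e^{-s^2}ds .
\]
The problem is therefore the classical triple product integral of Hermite polynomials against the Gaussian.

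\textbf{Step 1 (generating function).} I will use $\sum_m \tilde h_m(s)\frac{a^m}{m!}=e^{2as-a^2}$. Multiplying three copies with parameters $a,b,c$ and integrating against $e^{-s^2}$, completing the square gives
\[
\int_\R e^{2(a+b+c)s-a^2-b^2-c^2-s^2}ds=\sqrt\pi\, e^{2(ab+bc+ca)} .
\]
\textbf{Step 2 (coefficient extraction).} Expanding $e^{2ab}e^{2bc}e^{2ca}$ as a product of three power series, the coefficient of $a^mb^nc^\ell$ comes from the terms $(ab)^{\nu}(bc)^{\lambda}(ca)^{\mu}$. These must satisfy $\nu+\mu=m$, $\nu+\lambda=n$, $\lambda+\mu=\ell$, whose solution is
\[
\nu=\frac{m+n-\ell}{2},\qquad \lambda=\frac{n+\ell-m}{2},\qquad \mu=\frac{m+\ell-n}{2}.
\]
For these to be nonnegative integers we need $m+n+\ell$ even and the three triangle inequalities; otherwise the integral vanishes. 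When the conditions hold, the coefficient is $2^{\nu+\lambda+\mu}/(\nu!\lambda!\mu!)$, and hence
\[
\int\tilde h_m\tilde h_n\tilde h_\ell e^{-s^2}ds=\sqrt\pi\,\frac{2^{(m+n+\ell)/2}\,m!\,n!\,\ell!}{\nu!\,\lambda!\,\mu!} .
\]

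\textbf{Step 3 (normalizations).} I will insert $c_m=2^{-m/2}(4\pi)^{-1/4}(m!)^{-1/2}$. The powers of $2$ cancel against $2^{(m+n+\ell)/2}$, and the factorials combine to $(m!n!\ell!)^{1/2}$. What remains is the prefactor $2\sqrt\pi\,(4\pi)^{-3/4}=(4\pi)^{-1/4}$, which gives the stated formula.

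\textbf{Step 4 (the special case).} For $m=n=\ell=2$ we have $\nu=\lambda=\mu=1$, so
\[
A_{2,2,2}=(4\pi)^{-1/4}\cdot 8=2^{5/2}\pi^{-1/4}.
\]
This does not literally equal the quoted $2\pi^{-1/4}$. As a cross-check, I will compute directly with $h_2=c_2(x^2-2)$ and $c_2=2^{-2}\pi^{-1/4}$, using the Gaussian moments $\int x^{2j}e^{-x^2/4}=2\sqrt\pi\,2^{j}(2j-1)!!$. This gives
\[
\int(x^2-2)^3e^{-x^2/4}dx=2\sqrt\pi\,(120-72+24-8)=128\sqrt\pi,
\]
so that
\[
A_{2,2,2}=c_2^3\cdot128\sqrt\pi=2\pi^{-1/4}=8c_2 .
\]
The direct computation thus confirms the quoted value $2\pi^{-1/4}=8c_2$ for the paper's explicit $c_2$. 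However, the general formula, evaluated with the stated normalization $c_m=2^{-m/2}(4\pi)^{-1/4}(m!)^{-1/2}$, gives the different value $2^{5/2}\pi^{-1/4}$. The two inputs in the statement, the general $c_m$ and the listed $c_2$, are mutually inconsistent: the stated $c_m$ gives $c_2=2^{-2}\pi^{-1/4}/\sqrt2$, not the listed $c_2$. The lemma's two claims therefore correspond to slightly different normalization conventions and cannot both be verified as written.

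The main obstacle is not the combinatorics but this normalization bookkeeping. I will fix one consistent convention, namely the one the later applications actually use, which is the explicit $c_2$ and the value $A_{2,2,2}=8c_2$. Then I will record the general formula with the matching constant for that convention.
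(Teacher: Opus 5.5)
Your Steps 1--3 are correct and give exactly the stated formula. The generating function, completing the square, coefficient extraction, and the prefactor $2\sqrt\pi\,(4\pi)^{-3/4}=(4\pi)^{-1/4}$ are all right. This is the standard derivation of the Hermite triple-product formula; the paper gives no argument of its own and only cites Appendix B of Herrero--Vel\'azquez.

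The problem is Step 4 and the conclusion you draw from it, which rest on two arithmetic slips. First, at $m=n=\ell=2$ the formula involves $(m!\,n!\,\ell!)^{1/2}=\sqrt8=2\sqrt2$, not $8$, so
\[
A_{2,2,2}=(4\pi)^{-1/4}\cdot 2\sqrt2=2^{-1/2}\pi^{-1/4}\cdot 2^{3/2}=2\pi^{-1/4}.
\]
Second, the general normalization gives
\[
c_2=2^{-1}(4\pi)^{-1/4}(2!)^{-1/2}=2^{-1}\cdot 2^{-1/2}\pi^{-1/4}\cdot 2^{-1/2}=2^{-2}\pi^{-1/4}.
\]
This is exactly the listed value, and it is consistent with $h_2(x)=c_2\tilde h_2(x/2)=c_2(x^2-2)$. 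Hence $8c_2=2\pi^{-1/4}$. Your direct moment computation $c_2^3\cdot 128\sqrt\pi=2\pi^{-1/4}$ therefore confirms the general formula rather than contradicting it.

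So the statement contains no inconsistency. The ``fix'' you propose, re-choosing the constant in the general formula to match $A_{2,2,2}$, would turn a correct lemma into a false one. A quick check shows the prefactor cannot be altered: $A_{m,m,0}=c_0\|h_m\|_{L^2}^2=(4\pi)^{-1/4}$ for every $m$. The stated formula, with $\nu=m$ and $\lambda=\mu=0$, gives exactly $(4\pi)^{-1/4}(m!\,m!)^{1/2}/m!=(4\pi)^{-1/4}$.

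Replace Step 4 by the corrected evaluation and drop the final paragraph of your plan. Your argument then proves both assertions of the lemma as written. For full rigor in Step 2, justify the termwise integration of the generating series by the bound $\sum_m|\tilde h_m(s)|\,|a|^m/m!\leq e^{2|a||s|+|a|^2}$, which makes the triple product integrable against $e^{-s^2}$.
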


	\begin{table}[H]
		\begin{tabular}{|l|l|}
			\hline
			eigenvalues of $-L_{\cC_{n,k}}$ & corresponding eigenfunctions \\ \hline
			$-1$ & $1$ \\ \hline
			$-1/2$ & $\theta_i,y_j,\ i=1,2,\ldots,n-k+1,\ j=1,2,\ldots,k$ \\ \hline
			$0$ & $\theta_iy_j,\ h_2(y_j)=c_2(y_j^2-2),\ y_{j_1}y_{j_2}$ \\ \hline
			$\min\{1/(n-k),1/2\}$ & $\ldots$ 
			\\ \hline
		\end{tabular}
		\caption{Eigenvalues and eigenfunctions of $-L_{\cC_{n,k}}$.}
		\label{TableEigen}
	\end{table}
	
	Note that while $h_{m_i}(y)$ is a normalized eigenfunction in $\|\cdot\|_{L^2(\R)}$, $h_{m_i}(y_i)$ is not a normalized eigenfunction in $\|\cdot\|_{L^2(\cC_{n,k})}$. Thus, we define $H_{m_1,m_2,\cdots,m_k}$ be a multiple of $h_{m_1}h_{m_2}\cdots h_{m_k}$, such that $H_{m_1,m_2,\cdots,m_k}$ is a normalized eigenfunction in $\|\cdot\|_{L^2(\cC_{n,k})}$. In particular, $H_2(y_i)=c_0^{k-1}\cG_{n,k}^{-1/2}h_2(y_i)$, $H_{1,1}(y_i,y_j)=c_0^{k-2}\cG_{n,k}^{-1/2}h_1(y_i)h_1(y_j)$. Here $$\cG_{n,k}=\int_{\mathbb S^{n-k}(\varrho)}e^{-\frac{|x|^2}{4}}d\cH^{n-k}(x)=\varrho^{n-k}e^{-\frac{\varrho^2}{4}}\omega_{n-k},$$ where $\omega_{n-k}$ is the area of the $(n-k)$-dimensional unit sphere.
	
	Among all the eigenfunctions, we would like to draw readers' attention to $y_i$ corresponding to the eigenvalue $1/2$. Perturbing the cylinder in the $y_i$ direction can decrease the value of the $F$-functional, which translations or dilations cannot compensate for. This is the key idea in the proof of $ F$-instability of cylinders in \cite[Section 11]{CM1}. However, one of the key observations in the present paper is that in the presence of a nontrivial $h_2(y_i)=c_2(y_i^2-2)$ component, a perturbation in the $y_i$-direction can be killed. Let us elaborate on this point in the following remark. 
	
	\begin{remark}[Geometric meaning of the eigenfunctions $y_i$]\label{RmkTranslation}
		
		The eigenfunctions $y_j$'s represent the infinitesimal translations in the cylinder axis directions. These translations may not be seen in many cases. For example, if the hypersurface is exactly the generalized cylinder, it is translation invariant in the axis direction, then one can not see the infinitesimal translation represented by $y_i$'s.
		
		However, if the RMCF converges to $\cC_{n,k}$ in a ``pinching" manner, then $y_i$ shows up as an infinitesimal translation. For simplicity, let us suppose $\bM_t$ is an MCF with a spacetime singularity at $(0,0)$, that is modeled by a generalized cylinder $\cC_{n,1}$, where $\R$ has coordinate $y$. We assume the singularity is non-degenerate and in later section, we will prove that the dominant term in the RMCF is $h_2=c_2(y^2-2)$, namely if we write the RMCF $M_\tau$ as the graph of a function $u(\cdot,\tau)$ over $\cC_{n,1}$, in some large ball, then 
		$u(\cdot,\tau)=\frac{a}{\tau}(y^2-2)+o(1/\tau)$
		for some constant $a$. As a consequence, if we take a look at the spacetime blowing up of the MCF at a fixed nearby spacetime point $(\bar{z},0)$ where $\bar{z}=(0,z)\in\R^{n}\times\R$. Then the RMCF for $(\bar{z},0)$ can be express as the graph of 
		\begin{equation}\label{EqTranslation}
			v(\cdot,\tau)=\frac{a}{\tau}((e^{\tau/2}\bar z+y)^2-2)+o(1/\tau)
			=
			\frac{ae^\tau}{\tau}\bar{z}^2+\frac{2a}{\tau}e^{\tau/2}\bar{z} y+\frac{a}{\tau}(y^2-2)+o(1/\tau).
		\end{equation}
		Because $\bar{z}$ is a constant, $\frac{ae^\tau}{\tau}\bar{z}^2$ represent a spacetime dilation term. Then this implies that $y$ represents the translation in the axis direction, after modulo some dilations.
	\end{remark}
	While we will be mainly working with the (weighted) $H^1$-norm, when we project a function to some finite-dimensional eigensubspace, we only need to consider the $L^2$-norm. The following lemma is simply a consequence of integration by parts.
	
	\begin{lemma}
		Suppose $L_{\cC_{n,k}}\phi+\mu\phi=0$. Then for any $u\in H^1(\cC_{n,k})$,
		\[
		\|\Pi_\phi u\|_{H^1}=(2+\mu)^{1/2}\|\Pi_\phi u\|_{L^2}.
		\]
	\end{lemma}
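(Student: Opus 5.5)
The identity follows from one integration by parts in the weighted measure $d\mu=e^{-\frac{|x|^2}{4}}d\cH^n$, applied to $\Pi_\phi u$. Since $\Pi_\phi$ denotes the $L^2$-projection onto the span of the eigenfunction $\phi$, we may write $\Pi_\phi u=c\,\phi$ with $c=\langle u,\phi\rangle_{L^2}/\|\phi\|_{L^2}^2$. Both sides of the claimed identity are then homogeneous of degree one in $c$, so it suffices to prove
\[
\|\phi\|_{H^1}^2=(2+\mu)\|\phi\|_{L^2}^2 .
\]
Here $\|\cdot\|_{H^1}$ is the weighted norm from the preliminaries, $\|f\|_{H^1}^2=\|f\|_{L^2}^2+\|\nabla f\|_{L^2}^2$. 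The claim therefore reduces to showing $\|\nabla\phi\|_{L^2}^2=(1+\mu)\|\phi\|_{L^2}^2$.

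To see this, recall that on $\cC_{n,k}$ the operator splits as $L_{\cC_{n,k}}=\cL+1$, with the drift Laplacian $\cL f=\Delta f-\frac12\langle x,\nabla f\rangle=e^{\frac{|x|^2}{4}}\dv\bigl(e^{-\frac{|x|^2}{4}}\nabla f\bigr)$. The constant $1$ is $|A|^2+\frac12$ on the cylinder, where $|A|^2=\frac12$. The operator $\cL$ is symmetric in $L^2(d\mu)$ and satisfies
\[
\int f\,\cL f\,d\mu=-\int|\nabla f|^2\,d\mu .
\]
The eigen-equation $L_{\cC_{n,k}}\phi+\mu\phi=0$ reads $\cL\phi=-(1+\mu)\phi$. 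Multiplying by $\phi$ and integrating against $d\mu$ gives
\[
\|\nabla\phi\|_{L^2}^2=(1+\mu)\|\phi\|_{L^2}^2,
\]
and adding $\|\phi\|_{L^2}^2$ yields the reduced claim.

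The only point needing care is justifying the integration by parts on the noncompact cylinder. The eigenfunctions in question are products of spherical harmonics in $\theta$ and Hermite polynomials in $y$, so they grow at most polynomially, as do their gradients. I would multiply by a cutoff $\chi_R$, integrate by parts, and bound the boundary error by $\int|\nabla\chi_R|\,|\phi|\,|\nabla\phi|\,d\mu$. This tends to zero as $R\to\infty$ because polynomial growth is beaten by the Gaussian weight.

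As a check, $\mu=-1$ with $\phi=1$ gives $\|1\|_{H^1}=\|1\|_{L^2}$, which is consistent since the constant has zero gradient.
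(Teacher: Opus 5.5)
Your proof is correct and follows the paper's argument: using $L=\cL+1$ and one weighted integration by parts, the paper writes $\|\phi\|_{H^1}^2=\int\phi(\phi-\cL\phi)\,d\mu=(2+\mu)\|\phi\|_{L^2}^2$. You also spell out two points the paper leaves implicit, namely the reduction $\Pi_\phi u=c\,\phi$ and the cutoff justification of the integration by parts on the noncompact cylinder.
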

	\begin{proof}
		Using integration by parts,
		\begin{align*}
			\|\phi\|_{H^1}^2
			=&
			\int \phi^2 +|\nabla \phi|^2 d\mu
			=
			\int  \phi(\phi-\cL \phi) d\mu
			=
			\int \phi(1+\mu+1)\phi d\mu
			=(2+\mu)\|\phi\|_{L^2}^2.
		\end{align*}
	\end{proof}

	\subsection{The evolutionary equation and the difference equation}
	To study the asymptotics of the rescaled mean curvature flow approaching the limiting cylinder, we need to study the evolution of the graphical function. Let us briefly recall the analysis of the graphical function of rescaled mean curvature flow in \cite[Appendix A]{SunXue25_GenericI}. Suppose $u(\cdot, \tau):\ \cC_{n,k}\cap B_{\mathbf r(\tau)}\to \R$ is the rescaled mean curvature flow $M_\tau$ restricted to the ball $B_{\mathbf r(\tau)}$, then
	\[
	\pr_\tau u=Lu+\cQ(J^2u).
	\]
	Here $\cQ$ is at least quadratic in $u$ given in \cite[Appendix A]{SunXue25_GenericI} and $J^2u:=(u,Du,D^2u)$ means the $2$-jet of $u$. We shall study the evolution of $u$ under the differential equation. 
	\subsubsection{The cutoff}
	Since $M_{\tau}$ cannot be written as a global graph over $\cC_{n,k}$, we introduce a smooth cutoff function $\tilde \chi(\tau):\ \R_{\geq 0}\to \R$ that is $1$ over $[0,1]$ and vanishes outside $[0,2]$, so that $|\nabla^\ell \tilde \chi|\leq C_{\ell}$ for any $\ell\in\{0,1,2\}$. Then for any $r>1$ we define $\chi_r:\cC_{n,k}\to\R$ by 
	\[
	\chi_r(\theta,y)=
	\begin{cases}
		1,& |y|\leq r,
		\\
		\tilde \chi(|y|-r+1), & |y|\in[r,r+1],
		\\
		0,& |y|\geq r+1.
	\end{cases}
	\]
	In particular, given a differentiable function $f:\R\to\R_{\geq 1}$, we have $\pr_{\tau} \chi_{f(\tau)}(\theta,y)$ is only nonzero in $[f(\tau),f(\tau)+1]$, and $|\pr_{\tau} \chi_{f(\tau)}(\theta,y)|\leq C_1|f'(\tau)|$. In particular, when $f(\tau)=C(\log \tau)^{\alpha}$ or $\tau^\kappa$ for $\alpha>0$ and $\kappa<1$, we have $|\pr_{\tau} \chi_{f(\tau)}(\theta,y)|\to 0$ as $t\to\infty$.
	
	Given a function $\mathbf{r}:\R\to\R_{\geq 0}$, we define $\mathbb A_{\mathbf r(\tau)}$, or simply $\A_{\tau}$ if $\mathbf{r}$ is the graphical function and is fixed, to be the annulus region $\cC_{n,k}\cap(B_{\br(\tau)}\backslash B_{\br(\tau)-1})$. Then we derive the equation for $\chi u$:
	\begin{equation}\label{EqCutOff}
		\pr_{\tau} (\chi u)
		=
		L(\chi u)+\chi\cQ(J^2u)+(u(-\Delta\chi+\pr_{\tau}\chi)-2\langle \nabla\chi,\nabla u\rangle +\frac{1}{2}\langle x,\nabla \chi\rangle u),
	\end{equation}
	where the last term on the RHS is supported on the annulus $\mathbb A_{\mathbf r(\tau)}$. For simplicity, we write
	\begin{equation}\label{Eq}
		\pr_{\tau} (\chi u)
		=
		L(\chi u)+\cB(J^2u).
	\end{equation}
	
	\subsubsection{Modulo the rotations}\label{SSSBrendle}
	The natural function space to study these equations is $H^1(\cC_{n,k})$, which admits a natural direct sum decomposition into eigenspaces of the $L$-operator. Lying in the kernel of $L$, there are eigenfunctions generating rotations, i.e., Fourier modes in span$\{\theta_iy_j\}$, which are not important in our analysis of isolatedness of singularities, thus we choose to modulo them following Brendle-Choi \cite{BC1}. We consider the rotated rescaled mean curvature flow $\tilde M_{\tau}=G_{\tau} M_{\tau},\ G_{\tau}\in \mathrm{SO}(n+1)$, and write $\tilde M_{\tau}$ over $\mathcal{C}_{n,k}\cap B_{\br(\tau)}$ as the graph of a function $\tilde u$, where the rotation $G_{\tau}$ is chosen to make sure that $\chi_{\br(\tau)} \tilde u$ does not have Fourier modes corresponding to rotations. Explicitly, we require the equation $\int \tilde u\chi_{\br(\tau)} \langle Ax,\mathbf{n}(x)\rangle e^{-\frac{|x|^2}{4}}=0$ holds for all $A\in \mathfrak{so}(n+1)$ at every moment $t,$ where $\mathbf n(x)$ is the unit outer normal of the cylinder $\mathcal C_{n,k}$ at the point $x.$ We have that $\tilde u$ solves the equation $\partial_{\tau} \tilde u=L\tilde u+\mathcal Q(\tilde u)+\mathcal A$, where $\mathcal A=\langle\dot{G}_{\tau} G_{\tau}^{-1}x,\mathbf n(x)\rangle$ (see \cite{BC1}, Lemma 2.4) and the evolution of $\overline{v}=\tilde u\chi_{\br(\tau)}$ with cutoff satisfies the equation
	\begin{equation}\label{eq:bar equation}
		\pr_{\tau} \overline{v}=L\overline{v}+\overline{\cB}(J^2\overline{v})+\chi_{\br(\tau)}\mathcal A,
	\end{equation}
	where $\overline{\cB}(J^2\overline{v})=\cB(J^2\overline{v})+O(|G(\tau)||\bar v|)$ (see \cite[Lemma 2.4, 2.5]{BC1}), and  the RHS is zero when projected to the eigenmodes corresponding to rotations, since $\bar v$ does not have those Fourier modes by the choice of $G_{\tau}$.

	We have the following lemma (Lemma 5.1 of \cite{SunXue25_GenericI}).
	\begin{lemma}\label{LmA} 
		$	\|\chi_{\br(\tau)}\cA\|_{L^2}\leq 2\|\overline{\mathcal B}\|_{L^2}$.
	\end{lemma}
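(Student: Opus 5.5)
The plan is to project the equation \eqref{eq:bar equation} onto the finite-dimensional space of rotation modes. There $\chi_{\br(\tau)}\cA$ is pinned to minus the projection of $\overline{\cB}$, and it then suffices to show that the cutoff does not destroy more than half of the $L^2$-mass of $\cA$.

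\emph{Step 1: identify $\cA$.} Write $\cR:=\mathrm{span}\{\langle Ax,\bn(x)\rangle : A\in\mathfrak{so}(n+1)\}$ and let $\Pi_\cR$ be the $L^2$-projection onto $\cR$. Rotations inside the $\R^{n-k+1}$-factor or inside the $\R^k$-factor preserve $\cC_{n,k}$, so for those $A$ the function $\langle Ax,\bn\rangle$ vanishes on the cylinder. Hence $\cR=\mathrm{span}\{\theta_iy_j\}$, which is finite dimensional and lies in $\ker L$. Since $\dot G_\tau G_\tau^{-1}\in\mathfrak{so}(n+1)$, we get $\cA=\langle \dot G_\tau G_\tau^{-1}x,\bn\rangle\in\cR$, say $\cA=\sum a_{ij}\theta_iy_j$.

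\emph{Step 2: project the equation.} By the choice of $G_\tau$, $\langle \overline v(\tau),\phi\rangle_{L^2}=0$ for every $\phi\in\cR$ and every $\tau$. The functions $\phi$ and the weight are independent of $\tau$, so differentiating gives $\langle\pr_\tau\overline v,\phi\rangle=0$. Because $L$ is self-adjoint in the weighted space and $L\phi=0$, we also have $\langle L\overline v,\phi\rangle=\langle\overline v,L\phi\rangle=0$. Pairing \eqref{eq:bar equation} with $\phi\in\cR$ therefore yields
\[
\Pi_\cR(\chi_{\br(\tau)}\cA)=-\Pi_\cR\overline{\cB},\qquad\text{so}\qquad \|\Pi_\cR(\chi_{\br(\tau)}\cA)\|_{L^2}\le\|\overline{\cB}\|_{L^2}.
\]
The integration by parts behind self-adjointness is justified because $\overline v$ is compactly supported.

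\emph{Step 3: lower bound for the projection.} I will show that $\langle\chi_{\br}\cA,\cA\rangle_{L^2}\ge\frac12\|\cA\|_{L^2}^2$ once $\br(\tau)$ exceeds a fixed constant $R_0$; this holds for $\tau$ large, which is the only regime used. Since $\cR$ is finite dimensional and spanned by polynomials, there is $R_0$ such that $\int_{|y|\ge R_0}\phi^2\,d\mu\le\frac12\|\phi\|_{L^2}^2$ for all $\phi\in\cR$, by compactness of the unit sphere of $\cR$ and Gaussian decay. Using $0\le\chi\le1$ and $\chi=1$ on $\{|y|\le\br\}$, we get $\langle\chi\cA,\cA\rangle\ge\int_{|y|\le\br}\cA^2\,d\mu\ge\frac12\|\cA\|^2$. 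Then, because $\cA\in\cR$,
\[
\|\Pi_\cR(\chi\cA)\|\ge\frac{\langle\chi\cA,\cA\rangle}{\|\cA\|}\ge\tfrac12\|\cA\|\ge\tfrac12\|\chi\cA\|.
\]
Combining this with Step 2 gives $\|\chi_{\br(\tau)}\cA\|_{L^2}\le2\|\overline{\cB}\|_{L^2}$.

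\emph{Main obstacle.} The delicate point is Step 2: one must check that the time-dependent cutoff and the definition of $\overline{\cB}$, including the $O(|G||\bar v|)$ term from \cite{BC1}, put every term other than $\chi\cA$ into $\overline{\cB}$, so that no extra term survives the projection. Step 3 requires only the mild condition that $\br(\tau)$ is large.
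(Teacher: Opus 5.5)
Your argument is correct, and it matches the proof behind the cited Lemma 5.1 of \cite{SunXue25_GenericI} (itself modeled on Brendle--Choi): since $\cA\in\mathrm{span}\{\theta_iy_j\}\subset\ker L$ and the gauge keeps $\overline{v}$ orthogonal to this span for all $\tau$, pairing \eqref{eq:bar equation} with the rotation modes gives $\Pi(\chi_{\br(\tau)}\cA)=-\Pi\overline{\cB}$. The factor $2$ then comes, as you say, from $\chi_{\br(\tau)}\equiv1$ on a region carrying at least half the Gaussian mass of every rotation mode; this requires $\br(\tau)$ to exceed a fixed constant, which is the regime in which the lemma is used.
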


	\subsubsection{The difference equation}
	\label{SSDifferenceEquation}
	
	Let $u_1$ and $u_2$ be graphical rescaled mean curvature flows over $\cC_{n,k}\cap B_{\br(\tau)}$ and assume $\|u_i(\cdot,\tau)\|_{C^{2,\alpha}(B_{\br(\tau)})}
	\leq
	\eps_0.$ Define
	\[
	w=u_1-u_2,
	\qquad
	v=\chi_{\br(\tau)}w.
	\]
	Subtracting the two cutoff graph equations gives
	\begin{equation}
		\label{EqDifferenceCutoff}
		\partial_\tau v
		=
		Lv+\delta\cB.
	\end{equation}
	After replacing $u_i$ by $\chi_{\br(\tau)}u_i$ in the nonlinear terms, we may write
	\begin{equation}
		\label{EqDifferenceLinearized}
		\delta\cB
		=
		P(\tau)v+\cE,
	\end{equation}
	where
	\[
	P(\tau)v
	=
	\int_0^1
	D\cB
	\left(
	J^2(\chi_{\br(\tau)}u_2)+sJ^2v
	\right)
	[J^2v]
	\,ds,
	\]
	and $\cE$ is supported in the annulus
	\[
	\mathbb A_{\br(\tau)}
	=
	\cC_{n,k}
	\cap
	\left(
	B_{\br(\tau)}
	\setminus
	B_{\br(\tau)-1}
	\right).
	\]
	
	The coefficients of $P(\tau)$ satisfy
	\[
	\|P(\tau)v\|_{L^2}
	\leq
	C\eps_0\|v\|_{H^2},
	\]
	and
	\[
	\|\cE\|_{L^2}
	\leq
	C
	e^{-\br(\tau)^2/8}
	\left(
	\|u_1\|_{C^2(\mathbb A_{\br(\tau)})}
	+
	\|u_2\|_{C^2(\mathbb A_{\br(\tau)})}
	\right).
	\]

	\subsection{The Hilbert space and cones}

	To fix a notation, throughout the paper we use $\eps_0$ to be a small number giving upper bounds for the $\|u\|_{C^2}$ for the graphical function of the RMCF $\cM_{\tau}$ when restricted in the ball $B_{\mathbf r(\tau)}$.
	
	The following result was proved in \cite{SunXue25_GenericI} showing that the nonlinear equation can be well approximated by the linear one.
	\begin{proposition}[Additive one-step approximation; Proposition 5.2 of
		\cite{SunXue25_GenericI}] \label{PropApproximation}
		Let
		$v(\tau)=\chi_{\br(\tau)}(u_1(\tau)-u_2(\tau))$ satisfy
		\eqref{EqDifferenceCutoff}--\eqref{EqDifferenceLinearized}, put
		\[
		\omega(\tau)=\exp\!\left(-\frac{\br(\tau)^2}{5}\right),
		\qquad
		\eps_m=
		\sup_{\tau\in[m,m+1]}
		\max_{i=1,2}\|u_i(\tau)\|_{C^2(B_{\br(\tau)})}.
		\]
		For every sufficiently large integer $m$,
		\begin{equation}\label{EqOneStepAdditive}
			\|v(m+1)-e^Lv(m)\|
			\leq C\eps_m\bigl(\|v(m)\|+\omega(m)^{1/2}\bigr).
		\end{equation}
		Consequently the multiplicative estimate
		$\|v(m+1)-e^Lv(m)\|\leq2C\eps_m\|v(m)\|$
		is available only on the active scale
		$\|v(m)\|\geq\omega(m)^{1/2}$.
	\end{proposition}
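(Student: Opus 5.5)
The plan is to run a Duhamel/variation-of-constants argument on the unit interval $[m,m+1]$ for the cutoff difference equation \eqref{EqDifferenceCutoff}--\eqref{EqDifferenceLinearized}. Writing $v(\tau)$ for $\tau\in[m,m+1]$, we have
\[
v(m+1)-e^{L}v(m)=\int_m^{m+1}e^{(m+1-s)L}\bigl(P(s)v(s)+\cE(s)\bigr)\,ds ,
\]
so it suffices to bound the $H^1$-norm of the right side by $C\eps_m(\|v(m)\|+\omega(m)^{1/2})$. Since $e^{\sigma L}$ is a self-adjoint semigroup on weighted $L^2$ with spectrum bounded above by $1$, the standard smoothing estimate gives $\|e^{\sigma L}f\|_{H^1}\le C\sigma^{-1/2}\|f\|_{L^2}$ for $\sigma\in(0,1]$; the factor $\sigma^{-1/2}$ is integrable. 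Hence the integral is bounded by $C\sup_{s\in[m,m+1]}\bigl(\|P(s)v(s)\|_{L^2}+\|\cE(s)\|_{L^2}\bigr)$.

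For the error term, the stated bound $\|\cE\|_{L^2}\le Ce^{-\br^2/8}(\|u_1\|_{C^2}+\|u_2\|_{C^2})\le C\eps_m e^{-\br(s)^2/8}$, together with the slow variation of $\br$ on unit intervals (e.g.\ $\br\sim\tau^\kappa$ or $(\log\tau)^\alpha$), gives $e^{-\br(s)^2/8}\le e^{-\br(m)^2/10}=\omega(m)^{1/2}$ for large $m$. For the linear term, $\|P(s)v(s)\|_{L^2}\le C\eps_m\|v(s)\|_{H^2}$. This costs a derivative relative to the $H^1$ we control, so I would instead treat $P$ in divergence/quasilinear form. Its top-order part is $a^{ij}\nabla_{ij}v$ with $|a|\le C\eps_m$, and I would either integrate by parts once inside the Duhamel integral, using $\|e^{\sigma L}\nabla f\|_{H^1}\lesssim \sigma^{-1}\|f\|$, or run a direct energy estimate. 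I prefer the energy estimate: test the equation with $v$ and with $-Lv$ to get
\[
\frac{d}{ds}\|v\|_{H^1}^2\le C\|v\|_{H^1}^2-c\|v\|_{H^2}^2+C\eps_m\|v\|_{H^2}^2+C\|\cE\|_{L^2}^2 .
\]
For $\eps_m$ small this closes, giving $\sup_{[m,m+1]}\|v\|_{H^1}^2+\int_m^{m+1}\|v\|_{H^2}^2\le C(\|v(m)\|^2+\eps_m^2\omega(m))$.

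The same energy identity applied to $z=v-e^{(s-m)L}v(m)$, which solves $\partial_s z=Lz+Pv+\cE$ with $z(m)=0$, yields
\[
\sup\|z\|_{H^1}^2\le C\int_m^{m+1}\bigl(\eps_m^2\|v\|_{H^2}^2+\|\cE\|^2\bigr),
\]
using $\langle Pv,-Lz\rangle\le \tfrac{c}{2}\|z\|_{H^2}^2+C\|Pv\|^2$. Combining this with the previous bound gives $\|z(m+1)\|\le C\eps_m(\|v(m)\|+\omega(m)^{1/2})$, which is \eqref{EqOneStepAdditive}.

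The multiplicative consequence is immediate: if $\|v(m)\|\ge\omega(m)^{1/2}$, the right side is at most $2C\eps_m\|v(m)\|$.

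The main obstacle is the derivative loss in $P$, that is, controlling $\|v\|_{H^2}$ in the weighted space uniformly near the cutoff. Two points need checking there. First, the Gaussian-weighted integration by parts for $\langle Pv,Lz\rangle$ must produce no uncontrolled drift terms $\langle x,\nabla\rangle$ in the annulus. Second, the coefficients of $P$, which involve $J^2(\chi u_2)$ and derivatives of $\chi$, must stay $O(\eps_m)$ in $C^1$. This in turn requires the $C^{2,\alpha}$ (not just $C^2$) smallness assumed in the setup, or interior parabolic estimates to upgrade $C^2$ to $C^3$ on $B_{\br}$.
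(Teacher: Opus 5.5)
The paper contains no proof of this proposition; it imports it from Part I. The only ingredients it records are the two structural bounds of Section 2.2.3: $\|P(\tau)v\|_{L^2}\le C\eps\|v\|_{H^2}$ and the annular Gaussian bound on $\cE$. Your energy argument is a correct, self-contained derivation from exactly these two facts. Testing with $v$ and $-Lv$ gives the dissipation $\int_m^{m+1}\|v\|_{H^2}^2$, which pays for the derivative lost in $P$. Repeating the identity for $z=v-e^{(s-m)L}v(m)$ with $z(m)=0$ then gives the one-step bound.

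In this bound $\omega(m)^{1/2}$ enters only through $\cE$, which is controlled by the background graphs rather than by $\|v\|$; that is exactly why the estimate is additive. The bound on $P$ holds with $\eps_m$ in place of $\eps_0$ because $\cQ$ is at least quadratic, so its derivative at a jet of size $\eps_m$ is $O(\eps_m)$. You were also right to abandon the Duhamel-plus-smoothing route: a kernel bound of order $\sigma^{-1}$ for $e^{\sigma L}\nabla$ is not integrable at $\sigma=0$, so that variant does not close by itself.

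The two items in your final paragraph are not real obstacles.

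First, you never need to integrate by parts on $P$ or control its coefficients in $C^1$, since only $\|Pv\|_{L^2}$ enters. Absorb the cross term into the exact dissipation, $|\langle Pv,Lz\rangle|\le\frac12\|Lz\|_{L^2}^2+\frac12\|Pv\|_{L^2}^2$. Your inequality with $\|z\|_{H^2}$ is in fact valid as written. In the Gaussian space $\||y|f\|_{L^2}^2\le C(\|f\|_{L^2}^2+\|\nabla f\|_{L^2}^2)$, which follows by integrating $f^2$ against $\operatorname{div}(y\,e^{-|y|^2/4})$. Hence the drift obeys $\|\langle x,\nabla z\rangle\|_{L^2}\le C\|z\|_{H^2}$, and so $\|Lz\|_{L^2}\le C\|z\|_{H^2}$.

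Second, converting the $\|Lv\|^2$ dissipation into $H^2$ control uses the weighted Bochner identity
$\|\Delta f-\frac12\langle x,\nabla f\rangle\|_{L^2}^2=\|\nabla^2 f\|_{L^2}^2+\int\mathrm{Ric}_f(\nabla f,\nabla f)\,d\mu$.
On $\cC_{n,k}$ one has $\mathrm{Ric}_f=\mathrm{Ric}+\frac12 g_{\R^k}\ge0$, which gives $\|Lf\|_{L^2}^2\ge\|\nabla^2f\|_{L^2}^2-2\|\nabla f\|_{L^2}^2$.

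With these two facts inserted, and $\eps_m\le\eps_0$ small enough to absorb the $C\eps_m^2\|v\|_{H^2}^2$ term, your proof is complete.
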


	To continue, it is convenient to introduce the splitting $H^1=E^+\oplus E^0\oplus E^-$, where $E^+$ is spanned by Fourier modes with positive eigenvalues of $L$. Similarly for $E^0$ and $E^-$. As the linear equation is contracting in $E^-$ and expanding in $E^+$, it is also convenient to introduce the following cones. Let $\alpha>0$ be a positive number. We introduce two cones $\cK_{\geq 0}$ and $\cK_{0}$ as follows
	$$\cK_{\geq 0}(\alpha):=\left\{u=(u_+,u_0,u_-)\in E^+\oplus E^0\oplus E^-\ |\ \|u_++u_0\|\geq \alpha\|u_-\|\right\}$$
	is a $\alpha$-cone around $E^+\oplus E^0$ and 
	$$\cK_{0}(\alpha):=\left\{u=(u_+,u_0,u_-)\in E^+\oplus E^0\oplus E^-\ |\ \|u_0\|\geq 
	\alpha\|u_++u_-\|\right\}$$
	is a $\alpha$-cone around $ E^0$. Both are narrow when $\alpha$ is large. We also set
	\[
	\mathcal K_{>0}(\alpha)
	:=\{u=u_++u_0+u_-:\ \|u_+\|\geq
	\alpha\|u_0+u_-\|\},
	\]
	the cone around the strictly positive spectral subspace.
	
	The following theorem was proved in \cite{SunXue25_GenericI}.
	\begin{theorem}[Cone theorem; Proposition 5.3 of
		\cite{SunXue25_GenericI}]
		\label{ThmCone}
		Suppose $v$ satisfies Proposition \ref{PropApproximation}.  There are $R_*>1$, $\alpha_*>0$, $\nu>0$, and $q\in(0,1)$ such that, whenever
		\begin{equation}\label{EqActiveBoundaryScale}
			\|v(m)\|\geq R_*\omega(m)^{1/2},
		\end{equation}
		the following statements hold for all sufficiently large $m$:
		\begin{enumerate}
			\item if $v(m)\in\cK_{\geq0}(\alpha)$ and $C\eps_m\leq\alpha\leq c\eps_m^{-1}$, then $v(m+1)\in\cK_{\geq0}((1+\nu)\alpha)$; 
			\item if $v(m)\in\cK_{>0}(\beta)$ and $C\eps_m\leq\beta\leq c\eps_m^{-1}$, then $v(m+1)\in\cK_{>0}((1+\nu)\beta)$ and
			\[
			\|\Pi_+v(m+1)\|\geq(1+\nu)\|\Pi_+v(m)\|;
			\]
			\item if $v(m)\notin\cK_{\geq0}(\alpha_*)$, then
			\begin{equation}\label{EqNormalizedStableContraction}
				\frac{\|v(m+1)\|}{\omega(m+1)^{1/2}}
				\leq q\frac{\|v(m)\|}{\omega(m)^{1/2}}+C\eps_m.
			\end{equation}
		\end{enumerate}
	\end{theorem}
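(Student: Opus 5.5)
The plan is to treat $v(m+1)$ as a small perturbation of the linear step $e^Lv(m)$, using Proposition \ref{PropApproximation}, and then compare the growth rates of $e^L$ on $E^+$, $E^0$ and $E^-$.

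\textbf{Linear step and error bound.} Since $H^1=E^+\oplus E^0\oplus E^-$ is an orthogonal eigen-decomposition, the lemma $\|\Pi_\phi u\|_{H^1}=(2+\mu)^{1/2}\|\Pi_\phi u\|_{L^2}$ shows the eigenspaces are also $H^1$-orthogonal. So $e^L$ acts diagonally with respect to $\|\cdot\|$. Writing $v(m)=u_++u_0+u_-$, this gives
\[
\|e^Lu_+\|\geq e^{1/2}\|u_+\|,\qquad e^Lu_0=u_0,\qquad \|e^Lu_-\|\leq e^{-\lambda}\|u_-\|,
\]
with spectral gap $\lambda=\min\{1/(n-k),1/2\}>0$ from Table \ref{TableEigen}. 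Because $E^+$ consists only of the eigenvalues $1$ and $1/2$, we also have $\|e^Lu_+\|\leq e\|u_+\|$. Put $r:=v(m+1)-e^Lv(m)$. On the active scale \eqref{EqActiveBoundaryScale}, \eqref{EqOneStepAdditive} gives
\[
\|r\|\leq C\eps_m(1+R_*^{-1})\|v(m)\|\leq 2C\eps_m\|v(m)\| .
\]

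\textbf{Items (1) and (2).} For (1), $v(m)\in\cK_{\geq0}(\alpha)$ gives $\|v(m)\|\leq(1+\alpha^{-1})\|u_++u_0\|$. Projecting onto $E^+\oplus E^0$ and onto $E^-$, I would obtain
\[
\|\Pi_{+0}v(m+1)\|\geq\bigl(1-C\eps_m(1+\alpha^{-1})\bigr)\|u_++u_0\|,
\qquad
\|\Pi_-v(m+1)\|\leq\Bigl(\frac{e^{-\lambda}}{\alpha}+C\eps_m(1+\alpha^{-1})\Bigr)\|u_++u_0\|.
\]
Hence the new cone parameter is at least
\[
\alpha\,\frac{1-C\eps_m-C\eps_m\alpha^{-1}}{e^{-\lambda}+C\eps_m(1+\alpha)} .
\]
The two-sided restriction on $\alpha$ makes this work. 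The lower bound $\alpha\geq C\eps_m$, with the constant taken large, makes $C\eps_m\alpha^{-1}$ small. The upper bound $\alpha\leq c\eps_m^{-1}$, with $c$ small, makes $C\eps_m\alpha$ small. The ratio is then at least $(1+\nu)\alpha$ for some $\nu$ with $1+\nu<e^{\lambda/2}$.

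Item (2) follows the same computation, now with expansion rate $e^{1/2}$ on $E^+$ against rate at most $1$ on $E^0\oplus E^-$. It also gives
\[
\|\Pi_+v(m+1)\|\geq\bigl(e^{1/2}-C\eps_m(1+\beta^{-1})\bigr)\|u_+\|\geq(1+\nu)\|u_+\|.
\]

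\textbf{Item (3).} Here $\|u_++u_0\|<\alpha_*\|u_-\|$, so $\|u_-\|\leq\|v(m)\|$ and
\[
\|e^Lv(m)\|\leq e\|u_++u_0\|+e^{-\lambda}\|u_-\|\leq(e\alpha_*+e^{-\lambda})\|v(m)\| .
\]
I would choose $\alpha_*$ so small that $q_0:=e\alpha_*+e^{-\lambda}<1$. Then \eqref{EqOneStepAdditive}, without the active-scale assumption, gives
\[
\|v(m+1)\|\leq(q_0+C\eps_m)\|v(m)\|+C\eps_m\omega(m)^{1/2}.
\]
Dividing by $\omega(m+1)^{1/2}$ introduces the factor
\[
\bigl(\omega(m)/\omega(m+1)\bigr)^{1/2}=\exp\Bigl(\tfrac{\br(m+1)^2-\br(m)^2}{10}\Bigr).
\]

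\textbf{Main obstacle.} The delicate point is this last factor. One needs $\br(m+1)^2-\br(m)^2\to0$, or at least bounded by a small constant, so that $q:=(q_0+C\eps_m)\bigl(\omega(m)/\omega(m+1)\bigr)^{1/2}<1$. This holds for admissible radii such as $\br(\tau)=C(\log\tau)^\alpha$ or $\tau^\kappa$ with $\kappa<1/2$, and I would impose this growth condition on $\br$ explicitly. With it, \eqref{EqNormalizedStableContraction} follows. The remaining work is keeping the constants $C$, $c$, $R_*$ mutually consistent, so that the one-step error is absorbed into the spectral gap rather than into the cone parameter.
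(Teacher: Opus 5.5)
Your argument is correct and is the standard cone-field argument behind the cited Proposition 5.3 of Part I: you write $v(m+1)$ as the diagonal step $e^Lv(m)$ plus the one-step error of Proposition \ref{PropApproximation}, use the rates $\geq e^{1/2}$, $=1$, $\leq e^{-\lambda}$ on $E^+,E^0,E^-$, and let the two-sided bounds on $\alpha,\beta$ absorb the error. Your growth restriction on $\br$ in item (3) is genuinely needed and not an artifact: for $\br(\tau)=K\sqrt\tau$ the factor $(\omega(m)/\omega(m+1))^{1/2}=e^{K^2/10}$ destroys the contraction once $K$ is large, whereas the radius $R(\tau)=\tau^\kappa$ with $\kappa<1/4$ used in Section \ref{S_stability} satisfies it.
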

	
	\subsection{Higher estimate and improvement of error estimate}
	A central technique for the regularity theory in \cite{SunXue25_GenericI} is a generalized version of the Ornstein-Uhlenbeck regularization, originally from \cite{V1}. It allows one to obtain a large range $C^1$-estimate based on the $L^2$-estimate of the solution to the linearized equation with a nicely controlled error.

	\begin{proposition}[Ornstein-Uhlenbeck regularization, Proposition 3.1 in \cite{SunXue25_GenericI}]\label{PropVelazquez} 
		
		Let $0<\xi<2$, $\eta>0$, and $C_0,C_F>0$.  There are $\eps_*>0$, $c_*=c_*(\kappa)>0$, and $T_0>0$ such that the following holds.  Suppose $0\leq\eps_0\leq \min\{\eps_*,c_*\eta\}$, and $Z\geq0$ satisfies the following  on the interval   $[s_\tau,\tau]\subset[T_0,\infty)$: 
		
		\begin{equation}\label{EqVelazInequality}
			\partial_rZ-LZ
			\leq\eps_0Z+C_F\left(
			\frac{1+|y|^2}{r^2}
			+|y|\check\chi(y,r)\right),
		\end{equation}
		where \begin{equation}\label{eq:OU-cutoff-buffer}
			0\leq\check\chi\leq1,\qquad
			\check\chi(y,r)=0\quad\hbox{when }|y|\leq2R(r).
		\end{equation}
		Assume
		\begin{equation}\label{eq:OU-L2-assumption}
			\|Z(\cdot,r)\|_{L^2}
			\leq C_0r^{-\xi},
			\qquad s_\tau\leq r\leq\tau.
		\end{equation}
		Then, for every sufficiently large $\tau$,
		\begin{equation}\label{eq:OU-loss-estimate}
			\begin{split}
				\|Z(\cdot,\tau)\|_{C^0(B_{R(\tau)})}
				\leq C_\eta C_0\tau^{2\kappa-\xi+\eta}.
			\end{split}
		\end{equation}
	\end{proposition}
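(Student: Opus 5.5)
The plan is to follow Vel\'azquez's argument. We compare $Z$ with an explicit supersolution built from the Mehler kernel of the Ornstein--Uhlenbeck semigroup, and run the comparison over a time window of logarithmic length $T(\tau)\approx 2\kappa\log\tau$. The polynomial loss $\tau^{2\kappa}$ in \eqref{eq:OU-loss-estimate} is exactly $e^{T}$, the growth of the top mode of $e^{TL}$. I assume $R(r)=r^{\kappa}$ with $\kappa<1$ (the proposition's constant depends on $\kappa$), and that $[s_\tau,\tau]$ contains $[\tau-T,\tau]$ (take $\tau$ large).

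\textbf{Step 1 (comparison).} On $\cC_{n,k}$ we have $L=\Delta_{\bS^{n-k}(\varrho)}+\Delta_y-\frac12\langle y,\nabla_y\rangle+1$. Set $s=\tau-T$ and let $W$ solve
\[
\partial_r W=(L+\eps_0)W+C_F\Big(\frac{1+|y|^2}{r^2}+|y|\check\chi\Big),\qquad W(s)=Z(s).
\]
Since $Z\ge0$ satisfies \eqref{EqVelazInequality}, the parabolic maximum principle (justified with the Gaussian growth allowed by the weight) gives $0\le Z\le W$ on $[s,\tau]$. By Duhamel,
\[
W(\tau)=e^{T(L+\eps_0)}Z(s)+\int_s^\tau e^{(\tau-r)(L+\eps_0)}F(r)\,dr .
\]
The sphere factor only contributes the heat semigroup on $\bS^{n-k}$, which preserves sup bounds and, after a unit time, converts $L^2$ into $L^\infty$. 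So everything reduces to the Mehler kernel
\[
K_t(y,y')=\big(4\pi(1-e^{-t})\big)^{-k/2}e^{t}\exp\!\Big(-\frac{|e^{-t/2}y-y'|^2}{4(1-e^{-t})}\Big).
\]

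\textbf{Step 2 (homogeneous term).} Apply Cauchy--Schwarz against the weight $e^{-|y'|^2/4}$. For $t\ge1$ this should give
\[
\big|e^{tL}f(y)\big|\le Ce^{t}\exp\!\big(c\,e^{-t}|y|^2\big)\|f\|_{L^2}.
\]
Choose $T=2\kappa\log\tau+C$, so that $e^{-T}|y|^2\le1$ on $B_{R(\tau)}$. The homogeneous term is then bounded by $C\tau^{2\kappa}e^{\eps_0T}\|Z(s)\|_{L^2}\le C\tau^{2\kappa+2\kappa\eps_0}C_0\tau^{-\xi}$. Since $\eps_0\le c_*\eta$ with $c_*\sim 1/(2\kappa)$, this is at most $CC_0\tau^{2\kappa-\xi+\eta}$.

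\textbf{Step 3 (forcing terms).} For the polynomial forcing, use that $e^{tL}(1+|y|^2)$ is explicit: $|y|^2-2k$ lies in the kernel of $L$, so $e^{tL}(1+|y|^2)\le C(e^{t}+|y|^2)$. Integrating over a window of length $T$ gives a contribution $\lesssim T\tau^{-2}(\tau^{2\kappa}+\tau^{2\kappa})\lesssim\tau^{2\kappa-2}\log\tau\le\tau^{2\kappa-\xi+\eta}$, using $\xi<2$. For the $|y|\check\chi$ term, $K_t(y,\cdot)$ is concentrated near $e^{-t/2}y$, which has norm $\le R(\tau)$. The support of $\check\chi$ is at distance $\gtrsim R(r)\approx R(\tau)$ from there, because $r\in[\tau-T,\tau]$ changes $R$ only by a factor $1+o(1)$. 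The kernel therefore gives a factor $\exp(-cR(\tau)^2)=\exp(-c\tau^{2\kappa})$, which kills the prefactors $e^{t}\le\tau^{2\kappa}$ and the linear growth of $|y|$.

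\textbf{Main obstacle.} I expect the delicate step to be the forcing estimates in Step 3, for two reasons. First, the cutoff buffer \eqref{eq:OU-cutoff-buffer} must separate $\operatorname{supp}\check\chi$ from the region reached by the kernel uniformly over the window, including the drift of $R(r)$. Second, $T$ must be tuned so that $e^{-T}R(\tau)^2=O(1)$ while $e^{\eps_0 T}$ stays within $\tau^\eta$. The maximum-principle step also needs care with the growth at infinity, but this is standard.
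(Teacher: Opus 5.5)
Your argument is correct and is essentially the Vel\'azquez kernel-comparison argument that the paper attributes to \cite{V1} and imports from Part I without reproducing a proof: you compare with the Duhamel/Mehler supersolution on a window of length $T=2\kappa\log\tau+C$, so that $e^{-T}R(\tau)^2=O(1)$ and the homogeneous part costs $e^{(1+\eps_0)T}\lesssim\tau^{2\kappa+\eta}$ with $c_*=1/(2\kappa)$; the $(1+|y|^2)r^{-2}$ forcing is controlled through $L(|y|^2-2k)=0$, and the buffer $|y|\geq 2R(r)$ makes the $|y|\check\chi$ contribution $O(e^{-c\tau^{2\kappa}})$. Two points you should state explicitly rather than obtain by ``taking $\tau$ large'': first, the condition $\tau-s_\tau\geq 2\kappa\log\tau+C$ is an actual hypothesis the statement leaves implicit, and the conclusion fails on a window of bounded length (take $Z(s)$ to be a unit bump of height about $C_0\tau^{-\xi}e^{R(\tau)^2/8}$ near $|y|=R(\tau)$); second, the $C_F$-terms are absorbed into $C_\eta C_0\tau^{2\kappa-\xi+\eta}$ only for $\tau$ large depending on $C_F/C_0$, using $\xi<2$.
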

	In the following applications, we consider $Z$ to be $|\chi w|+|\nabla(\chi w)|$, where $w$ is the solution to the RMCF equation over $\cC_{n,k}$ in a large ball of radius $R$, and $\chi$ is a smooth cut-off function.

	\section{Stability of nondegenerate singularities}\label{S_stability}
	In this section, we prove the stability of nondegenerate singularities. In other words, suppose the MCF $\bM_t,\ t\in[-1,0)$ has a nondegenerate cylindrical singularity at the spacetime $(0,0)$, we will show that any sufficiently small perturbation of $\bM_{-1}$ also admits a nondegenerate cylindrical singularity.

	Let $M_\tau=e^{\tau/2}\bM_{-e^{-\tau}}$, $\tau\in[0,\infty),$ be the corresponding rescaled mean curvature flow.  Fix $\vartheta\in(4/5,1)$ and choose
	\[
	1-\vartheta<\kappa<\min\{\vartheta/4,1/4\}.
	\]
	By the normal form theorems in \cite{SunXue25_GenericI}, after choosing the axial coordinates, using the gauge of Section \ref{SSSBrendle} and increasing $T$ if necessary, $M_\tau$ is a graph of a function $u(\theta,y,\tau)$ over
	$\cC_{n,k}\cap B_{K\sqrt{\tau}}$ and
	\begin{equation}\label{EqNF}
		\left\|
		u(\cdot,\tau)
		-
		\frac{\varrho}{4\tau}
		\sum_{i=1}^k(y_i^2-2)
		\right\|_{H^1(\cC_{n,k}\cap B_{K\sqrt{\tau}})}
		\leq
		C\tau^{-1-\vartheta}.
	\end{equation}
	
	We shall apply the centering argument on the smaller graphical scale
	\[
	R(\tau)=\tau^\kappa.
	\]
	It follows from \eqref{EqNF} that
	\begin{equation}\label{EqNF-small-scale}
		\left\|
		\chi_{R(\tau)}u(\cdot,\tau)
		-
		\frac{\varrho}{4\tau}
		\sum_{i=1}^k(y_i^2-2)
		\right\|
		\leq
		C\tau^{-1-\vartheta}.
	\end{equation}
	In particular, for all sufficiently large $\tau$,
	\begin{equation}\label{Eq-size-neutral}
		\frac{c}{\tau}
		\leq
		\|\chi_{R(\tau)}u(\cdot,\tau)\|
		\leq
		\frac{C}{\tau},
		\qquad
		\|\PiNZ(\chi_{R(\tau)}u)\|
		\leq
		C\tau^{-\vartheta}
		\|\PiZ(\chi_{R(\tau)}u)\|.
	\end{equation}
	Thus,
	\[
	\chi_{R(\tau)}u(\cdot,\tau)
	\in
	\cK_{\geq 0}(c\tau^\vartheta).
	\]
	
	Let $\rmM(\tau)$ be the symmetric matrix of the quadratic Hermite coefficients, with the same normalization as in Section 5.5 of \cite{SunXue25_GenericI}. As long as the graphical function remains in the above neutral cone, the calculation in
	\cite{SunXue25_GenericI} gives
	\begin{align}\label{EqM}
		|\rmM'+\gamma\rmM^2|
		&\leq C(1+R(\tau)^2)
		\left(\alpha(\tau)^{-1}+\bar\eps(\tau)\right)|\rmM|^2
		+C\bar\eps(\tau)e^{-R(\tau)^2/5} \notag\\
		&\leq C\tau^{-\sigma}|\rmM|^2
	\end{align}
	
	for some $\sigma\in(0,1)$.  Here $\gamma>0$ is the dimensional constant from \cite{SunXue25_GenericI}, $\alpha(\tau)=c\tau^{\vartheta}$, and $\bar\eps(\tau)$ is the graphical $C^2$-norm on $B_{R(\tau)}$. Indeed, the small-radius normal form and interior parabolic estimates give $\bar\eps(\tau)\leq C\tau^{2\kappa-1}$; hence the two polynomial losses are $O(\tau^{2\kappa-\vartheta})$ and $O(\tau^{4\kappa-1})$.  Both decay by the choice of $\kappa$, while the Gaussian cutoff term is absorbed using
	\eqref{Eq-size-neutral}.

	We shall use the following consequence of \eqref{EqM}. Recall that by the variational theory of matrix-valued ODE, we can list the eigenvalues of  $\rmM$ as $\lambda_1,\cdots,\lambda_k$, which may not be ordered and can cross each other.

	\begin{lemma}[Matrix Riccati asymptotics; Lemma 5.8 of
		\cite{SunXue25_GenericI}]
		\label{LmNeutralPersistence}
		Let $\rmM:[T,\infty)\to\operatorname{Sym}_k$ satisfy
		$\rmM(\tau)\to0$ and, for some $\sigma\in(0,1)$,
		\[
		|\rmM'+\gamma\rmM^2|
		\leq C\tau^{-\sigma}|\rmM|^2.
		\]
		If $|\rmM(\tau)|\geq c/\tau$ for all sufficiently large $\tau$, then there is a nonzero orthogonal projection $P_\infty$ such that
		\begin{equation}\label{EqMatrixRiccatiAsymptotic}
			\rmM(\tau)=\frac{P_\infty}{\gamma\tau}
			+O(\tau^{-1-\sigma}).
		\end{equation}
		If, in addition, every eigenvalue of $\rmM(\tau)$ is bounded below by $c/\tau$ eventually, then $P_\infty=I_k$ and the normal form is nondegenerate.
	\end{lemma}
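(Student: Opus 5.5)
Write $\rmM=\gamma^{-1}N$, so that the hypothesis becomes $|N'+N^2|\leq C\tau^{-\sigma}|N|^2$. Since $N$ is symmetric, I will track its eigenvalues $\mu_1(\tau),\dots,\mu_k(\tau)$ through the Lipschitz min-max labeling (or Weyl's inequality). The perturbation $E:=N'+N^2$ has operator norm at most $C\tau^{-\sigma}|N|^2$. Each eigenvalue then satisfies, in the sense of Dini derivatives,
\[
|\mu_i'+\mu_i^2|\leq C\tau^{-\sigma}|N|^2 .
\]
The difficulty is that the error is measured by $|N|^2$, the largest eigenvalue squared, and not by $\mu_i^2$.

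\textbf{Step 1: the top eigenvalue behaves like $1/\tau$.} First I claim $|N|\leq C/\tau$. Consider the eigenvalue of largest modulus. If that eigenvalue is positive, it satisfies $\mu'\leq-(1-C\tau^{-\sigma})\mu^2$, and comparison with the Riccati equation gives $\mu\leq(1+o(1))/\tau$. A large negative eigenvalue would satisfy $\mu'\leq-\tfrac12\mu^2$ and blow up to $-\infty$ in finite time, contradicting $\rmM\to0$. Hence every $\mu_i\geq -C/\tau$, and more precisely $\mu_i\geq -o(1/\tau)$, by the same blow-up comparison applied from any time at which $\mu_i<-\delta/\tau$. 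Therefore $|N|\leq C/\tau$, and the eigenvalue error becomes $O(\tau^{-2-\sigma})$.

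\textbf{Step 2: the dichotomy $1/\tau$ or $0$.} Next, for each $i$ the scalar inequality $|\mu_i'+\mu_i^2|\leq C\tau^{-2-\sigma}$ holds. Set $f=\tau\mu_i$. Then
\[
\tau f'=f-f^2+O(\tau^{-\sigma}),
\]
that is, $\frac{d f}{d\log\tau}=f(1-f)+O(e^{-\sigma s})$ with $s=\log\tau$. This is an integrable perturbation of a one-dimensional flow on a bounded region, with hyperbolic fixed points $0$ (repelling) and $1$ (attracting). Since $f$ stays bounded, $f\to 1$ or $f\to 0$. In the case $f\to1$, linearizing at the attracting fixed point gives exponential convergence in $s$ at rate $\min(1,\sigma)$, so $f=1+O(\tau^{-\sigma})$. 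In the case $f\to0$, since $0$ is repelling, $f$ must follow the unstable manifold, which forces $f=O(\tau^{-\sigma})$. Indeed, integrating backwards, $f(s)=-\int_s^\infty e^{s-s'}(\dots)ds'$, which yields $|f|\leq Ce^{-\sigma s}$. The eigenvalues therefore split into those with $\mu_i=1/\tau+O(\tau^{-1-\sigma})$ and those with $\mu_i=O(\tau^{-1-\sigma})$. For large $\tau$ these two groups are separated by a gap of order $1/(2\tau)$, so no crossings occur between them and the spectral projection $P(\tau)$ onto the first group is well defined and has constant rank.

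\textbf{Step 3: the eigenprojection converges.} This step is the main obstacle. I need $P(\tau)$ to converge to some $P_\infty$ with error $O(\tau^{-\sigma})$. By Riesz-projection perturbation theory, $|P'|\leq C|N'_{\mathrm{off}}|/\mathrm{gap}$, where $N'_{\mathrm{off}}$ is the part of $N'$ off-diagonal between the two spectral blocks. Now $N'=-N^2+E$, and $N^2$ commutes with $P$, so it contributes nothing off-diagonal. Only $E$ contributes, and $|E|=O(\tau^{-2-\sigma})$. With the gap of order $1/\tau$, this gives
\[
|P'|\leq C\tau^{-1-\sigma},
\]
which is integrable. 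Hence $P(\tau)\to P_\infty$ with $|P-P_\infty|=O(\tau^{-\sigma})$. Combining this with Step 2,
\[
N=\frac{P}{\tau}+O(\tau^{-1-\sigma})=\frac{P_\infty}{\tau}+O(\tau^{-1-\sigma}),
\]
which is \eqref{EqMatrixRiccatiAsymptotic}.

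\textbf{Conclusions.} The hypothesis $|\rmM|\geq c/\tau$ excludes $P_\infty=0$, so $P_\infty$ is a nonzero orthogonal projection. If every eigenvalue is eventually at least $c/\tau$, then none of them can lie in the $O(\tau^{-1-\sigma})$ group, so $P_\infty=I_k$. This is exactly the nondegenerate normal form of Definition~\ref{DefDeg}.
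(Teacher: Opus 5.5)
Your proof is correct. Note that the paper gives no proof of this lemma here: it is imported verbatim as Lemma 5.8 of \cite{SunXue25_GenericI}. The only hints of method are the remark that the eigenvalues of $\rmM$ are followed via the variational theory of matrix ODEs, and the later appeal to a ``standard eigenvalue comparison''.

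Your argument is a self-contained proof along those lines:
\begin{itemize}
\item a scalar Riccati comparison for the ordered, Lipschitz eigenvalues;
\item the reduction $f=\tau\mu$ to $\frac{df}{ds}=f(1-f)+O(e^{-\sigma s})$, which gives the dichotomy $f=1+O(\tau^{-\sigma})$ or $f=O(\tau^{-\sigma})$;
\item convergence of the spectral projection.
\end{itemize}
Step 3 is the part the paper's remark does not address. Your observation that $N^2$ commutes with $P$ means only $E=O(\tau^{-2-\sigma})$ moves the eigenspace, while the gap is of order $1/\tau$. This is exactly what makes $|P'|=O(\tau^{-1-\sigma})$ integrable. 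Using the total projection of each cluster also avoids needing continuous eigenvector branches, which can fail to exist at crossings inside the top cluster even for smooth families.

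One point in Step 1 needs tightening. The inequalities $\mu'\le-(1-C\tau^{-\sigma})\mu^2$ and $\mu'\le-\tfrac12\mu^2$ hold only at times when that eigenvalue is the one of largest modulus. To integrate them you need this on whole intervals. A clean fix is as follows.
\begin{itemize}
\item A.e.\ you have $|\mu_i'+\mu_i^2|\le|E|\le C\tau^{-\sigma}(\mu_{\max}^2+\mu_{\min}^2)$.
\item Hence $g=-\mu_{\min}-\mu_{\max}$ satisfies $g'\ge(1-2C\tau^{-\sigma})(\mu_{\max}^2+\mu_{\min}^2)\ge0$ a.e.\ for large $\tau$.
\item Since $g\to0$, this forces $g\le0$, i.e.\ $|N|=\mu_{\max}$ for all large $\tau$.
\item The comparison for $\mu_{\max}$ alone then gives $|N|\le C/\tau$, with no blow-up argument needed.
\end{itemize}

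A minor wording point in Step 2: solutions tending to the repelling point $0$ form its stable set, not an unstable manifold. Your backward integral formula is still the right tool. Just absorb the $f^2$ term using $\sup_{s'\ge s}|f(s')|\to0$.
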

	We next consider a perturbation of the initial hypersurface. Fix a sufficiently large time $T$. Since the original flow is smooth on the compact time interval $[-1,-e^{-T}]$, smooth dependence on the initial hypersurface shows that, if the perturbation of $\bM_{-1}$ is sufficiently small in $C^2$, then the perturbed flow exists up to time $-e^{-T}$ and its rescaled time-$T$ slice is arbitrarily close to $M_T$ on every fixed compact set.
	
	Denote the perturbed rescaled flow by $\widetilde M_\tau$ and its graphical function in the gauge of Section \ref{SSSBrendle} by $\widetilde u(\cdot,\tau)$. By first choosing $T$ sufficiently large and then choosing the initial perturbation sufficiently small, we may assume that
	\begin{equation}\label{Eq-perturbed-initial-profile}
		\left\|
		\chi_{R(T)}\widetilde u(\cdot,T)
		-
		\frac{\varrho}{4T}
		\sum_{i=1}^k(y_i^2-2)
		\right\|
		\leq
		CT^{-1-\vartheta},
	\end{equation}
	and that all the graphical, cutoff, and entropy bounds needed below hold.
	\subsection{Centering the flow}\label{SS_centering}
	
	The positive modes $1$, $\theta_\alpha$, and $y_j$ may grow under the rescaled flow. The constant mode and the $\theta_\alpha$-modes can be removed by changing the spacetime scale and center. The $y_j$-modes can also be removed in the presence of a nontrivial quadratic component, as explained in Remark \ref{RmkTranslation}. We therefore introduce a centering map at each sufficiently large integer time.
	
	No additional projection or parameter is needed for the modes already eliminated by the gauge of Section \ref{SSSBrendle}. We use $\Pi_{\psi_\bx}$ and $\Pi_{\psi_\by}$ to denote the projections onto
	\[
	\operatorname{span}\{\theta_\alpha\}
	\quad\text{and}\quad
	\operatorname{span}\{y_j\},
	\]
	respectively. Finally, $\Pi_{-1}$ denotes the projection onto the constant functions. We set
	\[
	\Pi_*
	=
	\Pi_{-1}
	+
	\Pi_{\psi_\bx}
	+
	\Pi_{\psi_\by}.
	\]
	Let $\rmW_*$ be the finite-dimensional parameter space of triples
	\[
	\rmT=(\lambda,\bx,\by),
	\]
	where $\lambda>0$, $(\bx,\by)\in\R^{n-k+1}\times\R^k$. On a fixed rescaled slice, we write
	\[
	e^{\rmT}(M)
	=\lambda M-(\bx,\by).
	\]
	The resulting graph is always understood to be written again in the gauge of Section \ref{SSSBrendle}; no new geometric parameter is introduced by this convention.
	
	\begin{proposition}[Centering map]\label{PropCentering}
		For every $\Lambda>0$, there exist $\eps_0\in(0,1)$, $\xi_0\in(0,1)$, $R_0>0$, $c>0$, and $C>0$ with the following significance.
		
		Suppose $\xi\in(0,\xi_0)$ and $M$ is a hypersurface in $\R^{n+1}$ with entropy bounded by $\Lambda$. Suppose that, for some $R>R_0$, $M\cap B_R$ is the graph of a function $u$ over $\cC_{n,k}\cap B_R$ satisfying
		\begin{enumerate}
			\item $\|u\|_{C^2(B_R)}\leq\eps_0,
			\qquad
			\|\chi_Ru\|
			\geq
			Ce^{-R^2/4}\|u\|_{C^2(B_R)},
			\qquad
			\|\chi_Ru\|\leq\eps_0R^{-2}$;
			
			\item there exists $a>0$ such that
			\[
			\left\|
			\PiZ(\chi_Ru)
			-
			a\sum_{i=1}^k(y_i^2-2)
			\right\|
			\leq
			\frac{\xi}{10}\|\chi_Ru\|;
			\]
			
			\item $\|\PiZ(\chi_Ru)\|
			\geq
			(1-10\xi)\|\chi_Ru\|$;
			
			\item
			\[R\|\Pi_{-1}(\chi_Ru)\|
			+\|\Pi_{\psi_\bx}(\chi_Ru)\|
			+\|\Pi_{\psi_\by}(\chi_Ru)\|
			\leq c\xi\|\chi_Ru\|.
			\]
		\end{enumerate}
		
		Then there exists $\rmT=(\lambda,\bx,\by)\in\rmW_*$ such that
		\begin{equation}\label{Eq-centering-parameter}
			R|\lambda-1|
			+
			|\bx|
			+
			a|\by|
			\leq
			C\xi\|\chi_Ru\|.
		\end{equation}
		Moreover, $|\by|\leq C\xi$, and $e^{\rmT}(M)$ is the graph of a function $v$ over $\cC_{n,k}\cap B_{R-C\xi}$ satisfying
		\begin{enumerate}
			\item
			$
			\Pi_*\bigl(\chi_{R-C\xi}v\bigr)=0;
			$
			
			\item
			$
			\|\PiZ(\chi_{R-C\xi}v)\|
			\geq
			(1-\xi)\|\chi_{R-C\xi}v\|;
			$
			
			\item
			\[
			\left\|
			\PiZ(\chi_{R-C\xi}v)
			-
			a\sum_{i=1}^k(y_i^2-2)
			\right\|
			\leq
			\frac{\xi}{3}\|\chi_{R-C\xi}v\|.
			\]
		\end{enumerate}
	\end{proposition}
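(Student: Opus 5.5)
The plan is to obtain $\rmT$ from the implicit function theorem (or, more robustly, a Brouwer fixed-point argument) applied to the finite-dimensional map
\[
\Phi(\lambda,\bx,\by):=\Pi_*\bigl(\chi_{R-C\xi}v_{\rmT}\bigr)\in \operatorname{span}\{1,\theta_\alpha,y_j\},
\]
where $v_{\rmT}$ is the graphical function of $e^{\rmT}(M)$ over $\cC_{n,k}\cap B_{R-C\xi}$, rewritten in the gauge of Section \ref{SSSBrendle}. Graphicality of $e^{\rmT}(M)$ on the slightly smaller ball is immediate from $\|u\|_{C^2(B_R)}\le\eps_0$ once $|\lambda-1|$, $|\bx|$, $|\by|$ are small. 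The gauge rotation introduced here is small and only perturbs things at quadratic order, since the rotation modes $\theta_iy_j$ are orthogonal to $\Pi_*$. The first step is to linearize $v_{\rmT}$ in $\rmT$ to first order:
\[
v_{\rmT}\approx u+(\lambda-1)\varrho+\tfrac{\lambda-1}{2}\langle x,\nabla u\rangle\text{-type terms}-\textstyle\sum_\alpha \bx_\alpha\theta_\alpha/\varrho-\sum_j \by_j\,\pr_{y_j}u .
\]
The key point, following Remark \ref{RmkTranslation}, is that a pure axial translation of the exact cylinder changes nothing. The $y_j$-direction therefore acts only through $-\by_j\pr_{y_j}u$. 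Using hypothesis (2), $\pr_{y_j}u\approx 2a y_j$ up to errors of size $\xi\|\chi_Ru\|$ in the relevant projected norm.

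The second step is to compute $D\Phi$ at $\rmT=(1,0,0)$. It is block-triangular with three diagonal pieces:
\begin{itemize}
\item the dilation block acts on the constant mode with coefficient $\varrho$ (of order one);
\item the $\bx$ block acts on $\theta_\alpha$ with coefficient $\varrho^{-1}$ times a normalization;
\item the $\by$ block acts on $y_j$ with coefficient $-2a\,\|y_j\|^2$.
\end{itemize}
The off-diagonal contributions are of size $\eps_0$ or $\xi\|\chi_Ru\|$. Hypotheses (2)–(3) guarantee that $a\simeq\|\chi_Ru\|$, so the $\by$-block is invertible but only with inverse of size $a^{-1}$. This is exactly why the estimate \eqref{Eq-centering-parameter} has weight $a$ on $|\by|$.

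The third step is to solve $\Phi=0$. Rescale the unknowns as $(\lambda-1,\bx,a\by)$, and write $\Phi(\rmT)=D\Phi\cdot\rmT+\text{quadratic}$. Hypothesis (4) says $\Phi(1,0,0)$ is of size $c\xi\|\chi_Ru\|$. The factor $R$ in front of $\Pi_{-1}$ compensates for the $\langle x,\nabla u\rangle$ term, which can be as large as $R$ times $\lambda-1$ on the support of the cutoff. The cutoff errors of size $e^{-R^2/4}\|u\|_{C^2}$ are absorbed via the second condition in (1). A contraction-mapping or degree argument on a box of size $C\xi\|\chi_Ru\|$ in these rescaled variables then produces a zero, and \eqref{Eq-centering-parameter} follows. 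Since $|\by|\le C\xi\|\chi_Ru\|/a\le C\xi$, the ball shrinks only by $C\xi$, which justifies the radius $R-C\xi$.

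Properties (2) and (3) of $v$ come from comparing $\PiZ(\chi_{R-C\xi}v)$ with $\PiZ(\chi_Ru)$. The difference consists of three parts:
\begin{itemize}
\item the modification of the $y_j$ translation, which is $-\by_j\pr_{y_j}u$ and contributes neutral-mode mass $O(|\by|\cdot\|\chi_Ru\|)=O(\xi\|\chi_Ru\|)$;
\item the dilation term, of size $R|\lambda-1|$, which is also $O(\xi\|\chi_Ru\|)$;
\item the cutoff change, which is exponentially small.
\end{itemize}
Combining these with hypotheses (2)–(3) gives neutral dominance $(1-\xi)$ and closeness $\xi/3$, after choosing $c$ small and $\xi_0$ small.

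The main obstacle is the degeneracy of the $\by$-block: its size is $a\sim\|\chi_Ru\|$, which is tiny. The quadratic remainder in $\by$ (terms like $|\by|^2\pr^2_yu$) and the cross-terms must therefore be shown to be $o(a|\by|)$ uniformly. I would handle this by expanding the translation exactly rather than to first order. The map $u(\cdot)\mapsto u(\cdot+\by)$ has Taylor remainder $O(|\by|^2\|\nabla^2 u\|)$, and with $|\by|\le C\xi$ this is $\le C\xi a|\by|$, because $\|\nabla^2(\chi_R u)\|$ is controlled by $a$ up to $\xi$-errors in the neutral-dominated regime. Making this last control rigorous, including in the weighted $H^1$ norm near the cutoff region, is where I expect the real work to lie.
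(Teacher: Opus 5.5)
Your route is the paper's: a degree (equivalently, fixed-point) argument for the finite-dimensional map $\rmT\mapsto\Pi_*(\chi_{R-C\xi}v_{\rmT})$ in rescaled parameters. The axial block is invertible only at scale $a$, because translating $a\sum_i(y_i^2-2)$ produces $2a\,\by\cdot y$. The paper writes this as $P(\rmT)=P(0)+D(R(\lambda-1),\bx,a\by)+\cE(\rmT)$ with $D$ positive definite, and checks positivity of the pairing with $(R(\lambda-1),\bx,a\by)$ on the sphere $\|\rmT\|_*=C_1\xi\|\chi_Ru\|$.

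Two of your steps need repair.

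First, you rescale by $\lambda-1$ rather than $R(\lambda-1)$. A box of size $C\xi\|\chi_Ru\|$ in your variables therefore only yields $|\lambda-1|\le C\xi\|\chi_Ru\|$, a factor $R$ short of \eqref{Eq-centering-parameter}. The paper weights both the dilation parameter and the constant coefficient by $R$, which is what the factor $R$ in hypothesis (4) is for. Even with this weighting, the constant $a\lambda^{-2}|\by|^2$ created by the axial shift (Corollary \ref{cor_change_TDR}) must be cancelled by the dilation. So $R|\lambda-1|$ can be of order $Ra|\by|^2\sim R\xi^2\|\chi_Ru\|$, and the $R$-weighted bound needs $R\xi\lesssim1$. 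This holds where the proposition is used ($R=\delta T^\kappa$, $\xi=T^{-\vartheta}$, $\kappa<\vartheta$) and is tacit in \eqref{Eq-centering-error}.

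Second, your control of the translation remainder via $\|\nabla^2(\chi_Ru)\|\lesssim a$ is not available. Neutral dominance is an $H^1$ statement and says nothing about second derivatives of the non-neutral part, and (1) only gives $\|u\|_{C^2}\le\eps_0$, possibly far above $a$. Instead, put the shift on the test function:
$\int(\chi_Ru)(\theta,y+\by)\,\psi\,d\mu=\int(\chi_Ru)\,\psi(\theta,y-\by)\,e^{(2y\cdot\by-|\by|^2)/4}\,d\mu$.
Taylor expanding the smooth weighted eigenfunction gives a second-order remainder $O(|\by|^2\|\chi_Ru\|_{L^2})\le C\xi\,a|\by|$, up to annular cutoff terms.

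Your final step cannot be proved as stated: neutral dominance $(1-\xi)$ does not follow from hypothesis (3), which only provides $(1-10\xi)$. Any $\rmT$ obeying \eqref{Eq-centering-parameter} removes at most $c\xi\|\chi_Ru\|$ of $\Pi_*$-mass and changes the remaining modes by only a small multiple of $\xi\|\chi_Ru\|$. Hence the negative-mode mass permitted by (3), up to about $\sqrt{20\xi}\,\|\chi_Ru\|$, survives. For example, take $\chi_Ru=a\sum_i(y_i^2-2)+z$, where $z$ is a $y$-independent degree-two spherical harmonic (an $E^-$ mode) with $\|z\|=\sqrt{15\xi}\,\|\chi_Ru\|$. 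For small $a,\xi$ and large $R$ this satisfies (1)--(4) with $\Pi_*(\chi_Ru)\approx0$. Every admissible $\rmT$ leaves $z$ essentially unchanged, so $\|\PiZ(\chi v)\|\approx\sqrt{1-15\xi}\,\|\chi v\|<(1-\xi)\|\chi v\|$. The paper's proof asserts this step in the same way ("assumptions (2)--(3) \dots\ show that"), so the defect is in the statement. What the argument actually gives is dominance $1-C\xi$ with $C$ slightly above $10$. That is harmless in the application, where $\|\PiNZ(\chi_Ru)\|\le C\xi\|\PiZ(\chi_Ru)\|$ already gives dominance $1-C\xi^2$.
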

	
	\begin{proof}
		By assumptions (2) and (3), after decreasing $\xi_0$ if necessary,
		\begin{equation}\label{Eq-a-comparable-N}
			c\|\chi_Ru\|\leq a\leq C\|\chi_Ru\|.
		\end{equation}
		
		For $\rmT=(\lambda,\bx,\by)$, introduce the weighted parameter norm
		\[
		\|\rmT\|_*^2
		=
		R^2|\lambda-1|^2
		+
		|\bx|^2
		+
		a^2|\by|^2.
		\]
		Consider parameters satisfying $\|\rmT\|_* \leq C_1\xi \|\chi_Ru\|,$ where $C_1$ will be fixed below. By Proposition \ref{prop:AppGraph_TransDila} specialized to translations and dilations and \eqref{Eq-a-comparable-N}, $e^{\rmT}(M)$ is a graph over $\cC_{n,k}\cap B_{R-C\xi}$.  Let $u_{\rmT}$ denote its representative in the gauge of Section \ref{SSSBrendle}.  The gauge correction has no component in the target space at first order, and its remaining contribution is included in $\cE(\rmT)$ below.
		
		We identify each of the three geometric eigenspaces with its coefficient vector in a fixed Gaussian orthonormal basis. In the constant component, we multiply the coefficient vector by $R$, and in the $\theta_\alpha$-component we reverse the sign. With this identification, define
		\[
		P(\rmT)
		=
		\Pi_*\bigl(\chi_{R-C\xi}u_{\rmT}\bigr).
		\]
		The transformation formulas in Corollary \ref{cor_change_TDR} imply  
		\begin{equation}\label{Eq-centering-linearization}
			P(\rmT)
			=
			P(0)
			+
			D\bigl(R(\lambda-1),\bx,a\by\bigr)
			+
			\cE(\rmT),
		\end{equation}
		where $D$ is a fixed positive definite linear map. The positivity in the axial translation component follows from the term $2a\,\by\cdot y$ generated by translating $a\sum_i(y_i^2-2)$.
		
		The same transformation formulas, the cutoff estimate, and
		assumption (1) give
		\begin{equation}\label{Eq-centering-error}
			|\cE(\rmT)|
			\leq
			C\left(\eps_0+\xi+R^{-1}\right)\|\rmT\|_*
			+
			Ce^{-cR^2}\|u\|_{C^2(B_R)}.
		\end{equation}
		Assumption (4) gives
		\[
		|P(0)|\leq c\xi \|\chi_Ru\|.
		\]
		Thus, if $C_1$ is sufficiently large and consequently $c$, $\eps_0$, and $\xi_0$ are sufficiently small and $R_0$ is sufficiently large, we have
		\[
		\left\langle
		P(\rmT),
		\bigl(R(\lambda-1),\bx,a\by\bigr)
		\right\rangle
		>0
		\]
		whenever $\|\rmT\|_*=C_1\xi \|\chi_Ru\|.$
		
		The degree of $P$ on this ball is therefore equal to the degree of the positive definite linear map $D$, which is one. Hence there exists $\rmT$ in the ball such that $P(\rmT)=0.$ This proves the first conclusion and \eqref{Eq-centering-parameter}. In view of \eqref{Eq-a-comparable-N}, the axial estimate also gives $|\by|\leq C\xi.$
		
		Finally, Corollary \ref{cor_change_TDR}, assumptions (2)--(3), and
		\eqref{Eq-centering-error} show that
		\[
		\|\PiZ(\chi_{R-C\xi}u_{\rmT})\|
		\geq
		(1-\xi)\|\chi_{R-C\xi}u_{\rmT}\|
		\]
		and
		\[
		\left\|
		\PiZ(\chi_{R-C\xi}u_{\rmT})
		-
		a\sum_{i=1}^k(y_i^2-2)
		\right\|
		\leq
		\frac{\xi}{3}\|\chi_{R-C\xi}u_{\rmT}\|.
		\]
		Taking $v=u_{\rmT}$ completes the proof.
	\end{proof}

	\subsection{The stability theorem}
	
	Now we are ready to prove the stability of nondegenerate singularities. We will prove a quantitative version of the stability theorem, and the stability of nondegenerate singularities is a direct corollary of this theorem.
	
	\begin{theorem}\label{thm_existence_nondeg_with_ansatz}
		For $\Lambda>0$, there exist $T_0>0$, $C>0$, $\delta>0$ with the following significance: suppose $M$ is a hypersurface with $|A|$ uniformly bounded by $\Lambda$ in $\R^{n+1}$, with entropy bounded by $\Lambda$, and there exist $T>T_0$ such that $M\cap B_{\delta T^{1/2}}$ is a graph of function $u(\theta,y)$ over $\cC_{n,k}$ with
		\begin{equation}
			\left\|u(\theta,y)-\left(\varrho\sqrt{1+\frac{\sum_{i=1}^k(y_i^2-2)}{2T}}-\varrho\right)\right\|_{C^1(B_{\delta T^{1/2}})}\leq CT^{-\vartheta}.
		\end{equation}
		
		If, moreover, $u$ satisfies the assumptions in Proposition \ref{PropCentering}, then after a translation and dilation, the RMCF starting from $M$ converges to $\cC_{n,k}$ in $C_{\loc}^\infty$, and the corresponding singularity is nondegenerate.
	\end{theorem}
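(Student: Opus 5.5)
We run an inductive centering scheme along integer times $m\geq T$, keeping the unstable modes $1,\theta_\alpha,y_j$ killed at every step while the neutral quadratic matrix $\rmM$ evolves by the Riccati inequality \eqref{EqM}.

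\emph{Step 0 (initial centering).} The $C^1$ ansatz, together with the uniform curvature bound $|A|\leq\Lambda$ and interior estimates, gives the $H^1$ normal form \eqref{EqNF-small-scale} at scale $R(T)=T^\kappa$. Hence $\chi_{R(T)}u$ lies in the neutral cone $\cK_{\geq0}(cT^\vartheta)$, and $\rmM(T)$ is $\frac{\varrho}{4T}I_k$ up to $O(T^{-1-\vartheta})$, so every eigenvalue is comparable to $1/T$. Since the hypotheses of Proposition \ref{PropCentering} hold, we obtain $\rmT_T$ with $\Pi_*(\chi v)=0$, and by \eqref{Eq-centering-parameter} its size is $O(\xi/T)$.

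\emph{Inductive step.} Suppose that at time $m$ the (already recentered) graph $v_m$ satisfies $\Pi_*(\chi_{R(m)}v_m)=0$, lies in $\cK_{\geq0}(\alpha(m))$ with $\alpha(m)=cm^\vartheta$, and that $\rmM(m)$ has all eigenvalues in $[c/m,C/m]$. Flow for unit time.

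First, Proposition \ref{PropApproximation} and the cone Theorem \ref{ThmCone}(1) keep the solution in the neutral cone. The active-scale condition \eqref{EqActiveBoundaryScale} holds because $\|v\|\sim 1/m\gg e^{-m^{2\kappa}/10}$.

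Second, the modes in $E^+$ were zero at time $m$, so they can only be fed by the nonlinearity and the cutoff error. They therefore grow at most to size $Ce^{1}\eps_m\|v(m)\|\leq C m^{2\kappa-1}\|v(m)\|$, which is still $\leq c\xi\|v\|$. This is exactly hypothesis (4) of Proposition \ref{PropCentering}. Hypotheses (2)–(3) at time $m+1$ follow because the neutral part evolves by $\rmM'=-\gamma\rmM^2+O(m^{-\sigma}|\rmM|^2)$, which keeps $\rmM$ a positive multiple of $I_k$ up to relative error $o(1)$. Applying Proposition \ref{PropCentering} again recenters, with parameter $\rmT_{m+1}$ of size $|\lambda-1|\lesssim \xi m^{-1-\kappa}$, $|\bx|\lesssim\xi/m$, and $|\by|\lesssim\xi$ in the weighted sense $a|\by|\lesssim\xi\|v\|$.

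The key point is that after recentering, the previous steps' coefficient bounds must be improved, not merely reproduced. Otherwise $\sum_m|\by_m|$ diverges. We use the weighted norm: the true axial shift is $|\by_m|\lesssim \xi_m$, where $\xi_m$ can be taken $\sim m^{-\sigma'}$ by the refined bound $\|\Pi_{\psi_\by}\|\lesssim m^{2\kappa-1}\|v\|$. This gives $|\by_m|\lesssim m^{2\kappa-1+\dots}$ in rescaled units. In original spacetime the shifts are multiplied by $e^{-m/2}$, so they are summable. The same holds for $\bx_m$ and for $\log\lambda_m$, which carry factors $e^{-m/2}$ and $e^{-m}$ respectively after unscaling. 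Hence the composition of all centering maps converges to a single limiting spacetime translation and dilation. This is the ``after a translation and dilation'' in the statement.

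\emph{Conclusion.} With respect to this limiting center, the rescaled flow stays graphical with $\|\chi_{R(\tau)}u\|\sim1/\tau\to0$. Convergence to $\cC_{n,k}$ on compact sets follows, and higher regularity comes from the entropy bound and interior estimates. Along the whole sequence $\rmM$ obeys the hypothesis of Lemma \ref{LmNeutralPersistence}, with all eigenvalues $\geq c/\tau$: the Riccati flow with initial data near $\frac{1}{\gamma T}I_k$ preserves this. So $P_\infty=I_k$, and by Definition \ref{DefDeg} the singularity is nondegenerate.

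\emph{Main obstacle.} The hardest part is controlling the discrepancy between the recentered flows at successive steps. Each recentering perturbs $\rmM$ through the $2a\by\cdot y$ and dilation terms, and we must show these perturbations are $O(m^{-1-\sigma})$ so the Riccati lower bound on eigenvalues survives the infinite sequence. I would first attempt this by tracking $\rmM$ through Corollary \ref{cor_change_TDR}: a translation by $\by$ changes the quadratic coefficients only at second order, $a|\by|^2\lesssim \xi^2/m\cdot|\by|$. I would then check summability against the $\tau^{-\sigma}$ Riccati error.
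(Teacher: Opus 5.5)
Your outline follows the paper's proof. It recenters with Proposition \ref{PropCentering} at every integer time, propagates over unit time with Proposition \ref{PropApproximation} and Theorem \ref{ThmCone}, gets convergence of the composed unrescaled centering maps from the factor $e^{-m/2}$, and concludes nondegeneracy from \eqref{EqM} and Lemma \ref{LmNeutralPersistence}.

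You diverge at the step you call the main obstacle, and the paper avoids that obstacle by organizing the last step differently. You apply Lemma \ref{LmNeutralPersistence} ``along the sequence'' of recentered flows, which forces you to control the jump of $\rmM$ at infinitely many recenterings. But the lemma concerns one matrix function obeying \eqref{EqM} on a half-line, i.e.\ one RMCF. The paper instead does the following:
\begin{itemize}
\item It first forms the limit $\rmT'_\infty$ and notes that the tail of the unrescaled maps after time $N$ is $O(e^{-N/2}N^{-\vartheta})$, hence $O(N^{-\vartheta})$ in the rescaled picture at time $N$.
\item It uses Corollary \ref{cor_change_TDR} to show this tail changes the projections at time $N$ only at the order the cone permits. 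So the single flow $e^{\rmT'_\infty}(\bM)$ lies in $\cK_0(cN^\vartheta)$ for all large $N$.
\item For that one flow, \eqref{EqM} holds on a whole half-line, the spectral bounds at the starting time propagate by eigenvalue comparison, and Lemma \ref{LmNeutralPersistence} gives $P_\infty=I_k$.
\end{itemize}
No bookkeeping of $\rmM$ across recenterings is needed. The same tail argument is what justifies your unproved sentence that the flow about the limiting center stays graphical. Inside the iteration a crude form of your obstacle remains, because the hypotheses of Proposition \ref{PropCentering} must be re-verified at each step. What the degree argument actually uses, however, is only that the axial block stays positive definite, as the paper notes at the end of Section \ref{SDense}.

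There are also some missing or misattributed ingredients.
\begin{itemize}
\item The $C^1$ ansatz cannot produce the $H^1$ normal form at time $T$. Its error $CT^{-\vartheta}$ dominates the $O(T^{-1})$ profile near the origin. That information has to come from the hypotheses of Proposition \ref{PropCentering}, which the paper uses with $\xi=T^{-\vartheta}$ and $a=\varrho/(4T)$.
\item You use $\eps_m\lesssim m^{2\kappa-1}$ for the recentered flows without saying where it comes from. In the paper, the graphical and $C^2$ control on each unit interval comes from Theorem \ref{thm:pseudo-locality} on the annulus and from Proposition \ref{PropVelazquez} on $B_{\tau^\kappa}$.
\item Each centering costs $C\xi$ of graphical radius. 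This, not convergence of the center, is where the sum of the $\xi_m$ matters. The paper fixes $\xi_j=j^{-\vartheta}$ and uses $1-\vartheta<\kappa$ to keep $\delta j^\kappa-C\sum_{m\leq j}m^{-\vartheta}\geq\frac{\delta}{2}j^\kappa$. Your choice is $\xi_m\sim m^{2\kappa-1}$, and really $m^{3\kappa-1}$, since hypothesis (4) weights the constant mode by $R$. With that choice the same accounting gives a cumulative loss of order at least $j^{2\kappa}\gg j^\kappa$.
\end{itemize}
To fix the last point, you need one of two things. Either find a sharper bound on the positive modes fed in one step, compatible with $\xi_j=j^{-\vartheta}$. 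Or show that the radius is recovered from the geometry of the recentered flow at each step rather than inherited.
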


	\begin{proof}
		If the assumptions in Proposition \ref{PropCentering} are satisfied, and by Proposition \ref{PropCentering} with 
		\begin{itemize}
			\item $\|\chi u\|=O(T^{-1})$,
			\item $R=\delta T^{\kappa}$,
			\item $a=\frac{\varrho}{4T}$,
			\item $\xi=T^{-\vartheta}$,
		\end{itemize}
		we can find a dilation and translation $\rmT_T=(\lambda_T,\bx_T,\by_T)$ so that $\rmT_T(M)$ is the graph
		of a function $v$ over
		$\cC_{n,k}\cap B_{\delta T^{\kappa}-CT^{-\vartheta}}$, where
		\[
		T^\kappa|\lambda_T-1|+|\bx_T|+T^{-1}|\by_T|
		\leq CT^{-1-\vartheta}.
		\]
		
		Set $R_T=\delta T^{\kappa}-CT^{-\vartheta}$ and let $\chi_{R_T}$ be the corresponding cutoff. Let $\alpha_j=cj^{\vartheta}$ at each integer time $j\geq T$. Then $\chi_{R_T}v(\cdot,T)\in\cK_0(\alpha_T)$.
		
		Moreover, we apply the Proposition \ref{PropApproximation} (with $u_2=0,v=\chi u_1$). The assumptions are verified as follows. For $\tau\in [T,T+1]$, the $C^2$ norm $\|v\|_{C^2(\mathbb A_{R(\tau)})}\leq \eps_0$ is given by a pseudolocality argument (see the following Theorem \ref{thm:pseudo-locality}). The $C^2$ norm $\|v\|_{C^2(B_{R(\tau)})}\leq \eps_0$ is given by Proposition \ref{PropVelazquez}. Finally, the inequality $\eps_0\|\chi_{R(\tau)} v(\cdot, \tau)\|_{H^1}\geq \|v\|_{C^2(\mathbb A_{R(\tau)})} e^{-R(\tau)^2/4}$ is automatic considering $\|\chi v\|_{H^1}$ is of order $1/\tau$ and the choice $R(\tau)=\tau^\kappa.$
		
		Proposition \ref{PropApproximation} implies that the ``$-$" Fourier modes contract and the ``$+$" Fourier modes expand. At time $j+1$, if the graph does not already lie in $\cK_0(\alpha_{j+1})$, Proposition \ref{PropCentering}, with $\xi=(j+1)^{-\vartheta}$, removes the positive modes and restores this cone condition.  Pseudolocality and Proposition \ref{PropVelazquez} provide the same graphical and $C^2$ estimates on the next unit interval, so the construction iterates for every $j\geq T$.  At the $j$-th step the loss of graphical radius is at most $Cj^{-\vartheta}$; hence
		\[
		\delta j^\kappa-C\sum_{m=T}^{j}m^{-\vartheta}
		\geq \frac{\delta}{2}j^\kappa
		\]
		after increasing $T$, because $1-\vartheta<\kappa$.
		
		Note that if we rescale back to the mean curvature flow, $\rmT_j$ corresponds to a conformal transformation $\rmT_j'=(\lambda_j,\bx_j,\by_j)$, where $$|\lambda_j-1|\leq e^{-j/2}j^{-1-\vartheta/2},\ |\bx_j|\leq e^{-j/2}j^{-1-\vartheta},\ \mathrm{and}\ |\by_j|\leq e^{-j/2}j^{-\vartheta}.$$ Therefore, all the compositions of $\rmT_j'$ converge to a limit $\rmT_\infty'$. 
		
		Moreover, the tail of the unrescaled centering maps after time $N$ is $O(e^{-N/2}N^{-\vartheta})$.  Corollary \ref{cor_change_TDR} therefore shows that applying the limiting tail changes the projections at time $N$ by $o(N^{-1-\vartheta})$.  Consequently, the limiting centered flow still lies in $\cK_0(cN^{\vartheta})$ for all sufficiently large $N$.
		
		Then we have the  mean curvature flow of $e^{\rmT_\infty'}(\bM)$  converges to a cylindrical singularity, and  the preceding summation shows that $M'_\tau$ has graphical radius at least $c_0\delta\tau^\kappa$.
		
		Moreover, the neutral modes domination condition enables us to derive the ODE \eqref{EqM} of the neutral modes of the RMCF equation corresponding to $e^{\rmT_\infty'}(\bM)$. The solutions of \eqref{EqM} was analyzed in Section 5 of \cite{SunXue25_GenericI}. In particular, the assumption of Lemma \ref{LmNeutralPersistence} is satisfied (see Lemma 5.5 and 5.6 of \cite{SunXue25_GenericI}) and Lemma \ref{LmNeutralPersistence} implies that the corresponding singularity of $e^{\rmT_\infty'}(\bM)$ is nondegenerate. 
		
	\end{proof}
	
	The following pseudolocality result was proved in \cite{SunXue25_GenericI}.
	
	\begin{theorem}\label{thm:pseudo-locality}
		For any $\eps\in(0,1)$, $\rho\in(0,1)$ and $\sigma>0$, there exist $R_0>0$ and $\eta_0>0$ with the following significance: suppose $M_\tau$ is a rescaled mean curvature flow and $M_{\tau_0}$ is the graph of a function $u(\cdot,\tau_0)$ over $\cC_{n,k}\cap B_R(p)$ for some $R>R_0$, $p\in\{0\}\times\R^k$, and $\|u(\cdot,\tau_0)\|_{C^1(B_R(p))}\leq\eta\leq \eta_0$. Then there exists $\bar\delta >0$ such that when $\tau\in[\tau_0+\sigma,\tau_0+\bar\delta]$, $M_\tau$ is the graph of a function $u(\cdot,\tau)$ over $\cC_{n,k}\cap B_{e^{(\tau-\tau_0)/2}\rho R}(e^{(\tau-\tau_0)/2}p)$ with $$\|\nabla^ku(\cdot,\tau)\|_{L^\infty(\cC_{n,k}\cap B_{e^{(\tau-\tau_0)/2}\rho R}(e^{(\tau-\tau_0)/2}p))}\leq e^{-(k-1)(\tau-\tau_0)/2}\eps, \quad k=0,1,2,3.$$ Moreover, $\bar\delta\to\infty$ as $\eta\to 0$.
	\end{theorem}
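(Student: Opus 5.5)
The statement is the pseudolocality theorem (Theorem \ref{thm:pseudo-locality}). I would reduce it to the Ecker--Huisken / Chen--Yin type pseudolocality for the unrescaled flow, applied around each point of the cylinder piece near $p$, and then undo the rescaling.

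\textbf{Step 1: unrescale.} Write $M_\tau=e^{\tau/2}\bM_{-e^{-\tau}}$, so that $\bM_{t_0}$ with $t_0=-e^{-\tau_0}$ is a graph over $e^{-\tau_0/2}(\cC_{n,k}\cap B_R(p))$. The graphs are small in $C^1$, with bound $\eta$, after scaling.

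The cylinder $\cC_{n,k}$ has radius $\varrho$. After rescaling, the scale of the sphere factor is comparable to the remaining time. So the relevant comparison flow is the exact shrinking cylinder $\sqrt{-t}\,\cC_{n,k}$, and not a static plane. I would therefore work locally. Cover $\cC_{n,k}\cap B_{\rho R}(p)$ by balls $B_r(q)$ with $r$ a large fixed multiple of $\varrho$ but much smaller than $(1-\rho)R$. On each enlarged ball $B_{2r}(q)\subset B_R(p)$, the initial slice is $\eta$-close in $C^1$ to the cylinder.

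\textbf{Step 2: local stability.} Use a local stability statement for the cylinder under the rescaled flow. If $M_{\tau_0}\cap B_{2r}(q)$ is $\eta$-close in $C^1$ to $\cC_{n,k}$, then on $[\tau_0,\tau_0+\bar\delta]$ the slice $M_\tau\cap B_{e^{(\tau-\tau_0)/2}r}(e^{(\tau-\tau_0)/2}q)$ stays $\eps$-close. I would prove this by contradiction and compactness, using Brakke / White local regularity together with the entropy bound. Take $\eta_i\to 0$ with closeness failing at times in $[\tau_0+\sigma,\tau_0+\bar\delta]$. The limit flow agrees with the static cylinder initially on larger and larger balls. By uniqueness of smooth flows with bounded curvature on such regions, e.g.\ via Ecker--Huisken interior estimates applied to the unrescaled graph over the shrinking cylinder, the limit is the exact cylinder, which is a contradiction.

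The point of the factor $e^{(\tau-\tau_0)/2}$ is that, in unrescaled terms, a fixed spatial ball $B_{\rho R e^{-\tau_0/2}}$ appears in rescaled variables as a growing ball. So the estimate is really an unrescaled local statement over a fixed physical region, for a physical time $e^{-\tau_0}(1-e^{-(\tau-\tau_0)})$. Interior estimates give $|\nabla^k u|\lesssim\eps$ in unrescaled variables at physical scale $\sqrt{-t}$. Translating back gives the weights $e^{-(k-1)(\tau-\tau_0)/2}$: the $k$-th derivative of the graph picks up a factor $\lambda^{1-k}$ under dilation by $\lambda=e^{(\tau-\tau_0)/2}$ relative to the physical scale at $\tau_0$.

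\textbf{Step 3: choose the time and radius.} The lower time $\sigma$ is needed to gain the $C^2$ and $C^3$ smoothing from $C^1$ data, via Ecker--Huisken interior estimates after a fixed positive time. The claim $\bar\delta\to\infty$ as $\eta\to0$ follows from the compactness argument: for each fixed $\bar\delta$ there is some $\eta_0(\bar\delta)$. Inverting this gives $\bar\delta(\eta)\to\infty$. The loss from $R$ to $\rho R$ handles the boundary effects, requiring $R_0$ large compared with $r$.

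\textbf{Main obstacle.} The hardest step is the compactness and uniqueness argument. It must hold uniformly in $\tau_0$ and in $p$ along the axis. It also needs nonlinear control up to the growing ball, while the graph is only $C^1$-close initially. I would first try to make the whole argument local in unrescaled variables using pseudolocality for graphs with small Lipschitz constant, in the style of Ecker--Huisken. This gives curvature bounds $|A|\le C/\sqrt{t-t_0}$ near each point, which avoids global compactness altogether.
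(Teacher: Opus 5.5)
The paper gives no proof of this statement; it is imported from Part I. Your proposal therefore has to stand on its own, and it has a genuine gap at the uniqueness step of Step 2.

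\textbf{The gap.} A smooth mean curvature flow in a ball is not determined by its initial data in that ball. So a compactness limit that agrees with $\cC_{n,k}$ initially only on a fixed ball $B_{2r}(q)$ need not be the shrinking cylinder there at any later time. This is not a removable technicality, because the rescaled dynamics amplify the defect rather than damp it. The influence from outside the ball is small in the interior, but the unstable modes of $L$ blow it up. The constant mode grows like $e^{\tau-\tau_0}$; geometrically it shifts the singular time.

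A concrete example: let $M_{\tau_0}$ be a smooth closed convex hypersurface that coincides with $\cC_{n,k}$ in $B_R$, obtained by capping off the solid cylinder outside $B_R$.
\begin{itemize}
\item It satisfies the hypotheses with $u\equiv 0$, hence for every $\eta>0$.
\item It lies inside the solid shrinking cylinder and, by the strong maximum principle, separates from it immediately. So near $q$ it differs from $\sqrt{-t}\,\cC_{n,k}$ at every later time, which is exactly the local uniqueness you invoke.
\item It therefore becomes extinct at a physical time strictly before $0$, i.e.\ at a finite rescaled time $\tau_{\mathrm{ext}}(R)$, after which there is no graph.
\end{itemize}
Hence $\bar\delta\le \tau_{\mathrm{ext}}(R)-\tau_0$ for every $\eta$. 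With $R$, or your $r$, held fixed, ``$\bar\delta\to\infty$ as $\eta\to0$'' cannot be proved by any argument. Your inversion step (``for each fixed $\bar\delta$ there is $\eta_0(\bar\delta)$'') silently drops the necessary dependence $R_0=R_0(\bar\delta)$.

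\textbf{What your approach does give once repaired.} For each fixed $D\ge\sigma$ there are $R_0(D)$ and $\eta_0(D)$ such that the conclusion holds on $[\tau_0+\sigma,\tau_0+D]$. So $\bar\delta(\eta,R)\to\infty$ only as $\eta\to0$ and $R\to\infty$ jointly. That is all the paper actually uses: unit time steps at growing radii. The repair goes as follows.
\begin{itemize}
\item Abandon the fixed-$r$ covering. Argue by contradiction with $R_i\to\infty$ and $\eta_i\to0$.
\item Recenter at the axial position of the failure point. Physical axial translations are exactly the moving centers $e^{(\tau-\tau_0)/2}p$. After recentering, the initial closeness holds on balls of radius $(1-\rho)R_i\to\infty$.
\item The limit then starts from the whole cylinder. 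You must identify it with $\sqrt{-t}\,\cC_{n,k}$ by a global argument: limits of smooth flows are unit-regular, White's local regularity applies near the smooth initial data, and then you need a uniqueness statement for complete solutions.
\item Mass bounds should come from the graphical hypothesis and Brakke's local area estimate, since no entropy bound is assumed.
\end{itemize}

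\textbf{Two smaller points.} For fixed $D$ the weights $e^{-(k-1)(\tau-\tau_0)/2}$ are harmless: absorb $e^{D}$ into $\eps$. Your explanation of them is off, though. Current-scale interior estimates only give rescaled bounds $\lesssim\eps$. For $k=2,3$ the stated bounds are stronger and correspond to estimates at the initial physical scale $e^{-\tau_0/2}$.

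Likewise, the fallback in your last paragraph does not work on its own. Lipschitz-graph pseudolocality (Ecker--Huisken, Ilmanen--Neves--Schulze, Chen--Yin) at the scale of the neck controls only a rescaled time interval whose length is a small fixed constant. Without the compactness argument above or a linear stability argument, it cannot reach $\tau_0+\sigma$ for general $\sigma$, let alone long times.
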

	
	We next prove the stability theorems stated in the introduction. 
	
	\begin{proof}[Proof of stability of nondegenerate singularities]
		By \cite{SunXue25_GenericI}, we first fix a sufficiently large time $T$, so that $M_T$ is a graph of a function $u(\cdot,T)$ in $\cC_{n,k}\cap B_{\delta T^{1/2}}$ with 
		\[
		\left\|u(\cdot,T)-\left(\varrho\sqrt{1+\frac{\sum_{i=1}^k(y_i^2-2)}{2T}}-\varrho\right)\right\|_{C^1(B_{\delta T^{1/2}})}\leq CT^{-\vartheta}.
		\]
		Now we consider the perturbed flow to be $\widetilde M_{\tau}$. For any $T\gg 1$, if the perturbation is sufficiently small, the assumptions in Proposition \ref{PropCentering} are satisfied. Then the previous Theorem proves the desired stability.
	\end{proof}

	Finally, we have the following quantitative stability result. 
	\begin{proof}[Proof of Theorem \ref{ThmQuantativeStability}]
		We choose $T$ so large that the normal form holds for $u_{\tau},$ $\tau\geq T.$
		With the assumption of the corollary, we can verify the assumption of Proposition \ref{PropCentering}. Then the argument follows from that of the stability theorem. 
	\end{proof}
	
	As an application, we prove the existence of the rotationally symmetric model for the nondegenerate cylindrical singularities. Given $1\leq k< n$, we prove the existence of an $O(n-k+1)\times O(k)$-invariant mean curvature flow with a nondegenerate cylindrical singularity modeled by $\cC_{n,k}$. When $k=1$, the existence of such a model flow was sketched in the last section of \cite{AV}.

	\begin{proof}[Proof of Theorem \ref{ThmModel}]
		We consider a hypersurface that is a global graph over $\cC_{n,k}$, with the graphical function $u(\theta,y)$ as follows: $u(\theta,y)=f(|y|)$, where $f$ is a single variable smooth function, and when $|z|\leq \delta T^{1/2}$
		\begin{equation}
			f(z)=\left(\varrho\sqrt{1+\frac{z^2-2k}{2T}}-\varrho\right),
		\end{equation}
		and when $|z|\geq 2\delta T^{1/2}$, $f$ is the constant function
		\begin{equation}
			f(z)=\varrho\left(\sqrt{1+\delta^2}-1\right).
		\end{equation}
		When $\delta T^{1/2}\leq |z|\leq 2\delta T^{1/2}$, $f$ is smooth and monotone increasing. Then we apply Theorem \ref{thm_existence_nondeg_with_ansatz} when $T$ is sufficiently large to get a desired RMCF. Note that by the $O(n-k+1)\times O(k)$-invariance of $M$, the Fourier modes $\Pi_{\psi_\bx}\chi_R u$, $\Pi_{\psi_\by}\chi_R u$ all vanish. Thus, during the iteration process, there is no translation or dilation, and the fixed gauge of Section \ref{SSSBrendle} preserves the symmetry. Thus, the RMCF we constructed must also be $O(n-k+1)\times O(k)$-invariant.
	\end{proof}

	\section{Denseness of nondegenerate singularities}\label{SDense}
	In this section, we prove Theorem \ref{ThmGenericity}. The proof is divided into several steps: in Section \ref{SS_difference}, we study the evolution equation of the difference between two RMCFs; particularly, in Section \ref{SS:evolution of neutral}, we study the evolution of Neutral modes. Then we describe how to choose an initial perturbation in Section \ref{SS:ConstructInitialPerturbation}, and how this initial perturbation would imply nondegeneracy after evolving for a definite amount of time in Section \ref{SS_definite time perturbed}. Finally, because of the technical issue brought by the translation and dilation modes, we need to perturb the flow into not a single nearby flow, but a family of flows, and this is discussed in Section \ref{SS:PerturbCondormal}.

	As we have explained in the introduction, the proof differs quite a lot from the perturbation argument as in \cite{SX1,SX2,SX3}, due to the presence of the $y_i$-eigenmodes. The difficulty is overcome by combining the observation in Remark \ref{RmkTranslation} as well as an argument applying a centering map for infinitely many steps along the flow. Moreover, the neutral eigenspace is in general not a one-dimensional space, but essentially $k$-dimensional, spanned by $h_2(y_i)$ for $i=1,2,\dots,k$. Thus, the original RMCF may be degenerate in some directions of $y_i$, and nondegenerate in some other directions $y_j$. 
	
	Let us assume that $M_{\tau}$ is the RMCF, corresponding to an MCF $\bM_t,\ t\in[-1,0)$. Moreover, we assume that
	\begin{itemize}
		\item the normal form of $M_{\tau}$ has leading term $\sum_{i=\ell+1}^k\frac{\varrho}{4\tau}(y_i^2-2)$. In other words, the $y_1,\ldots,y_\ell$-direction are degenerate and the remaining $y_{\ell+1},\ldots, y_k$-direction are nondegenerate. 
	\end{itemize}
	When $\ell=0$, we have that all directions are nondegenerate, and when $\ell=k$, we have that all directions are degenerate. We will show that under the type-I curvature assumption, we can perturb the initial data so that the perturbed flow generates a nondegenerate singularity.
	
	\subsection{Analysis of the difference}\label{SS_difference}
	Let $(M_{\tau})_{\tau\geq 0}$ be an RMCF converging to a cylinder $\mathcal C_{n,k}$ in the $C^\infty_{loc}$ sense as $\tau\to\infty.$ Let $\widetilde{M}_{\tau}$ be another RMCF obtained by slightly perturbing the initial condition. For each moment $\tau$, we write $\widetilde{M}_{\tau}$ as the graph of a function $\tilde w$ over $M_{\tau}$, with graphical radius $\approx O(\tau^{1/2})$. The difference function $\tilde w$ satisfies the following expression.
	
	Let the background manifold $M(\tau)$ in local coordinate have metric $g_{ij}$, second fundamental form $h_{ij}$, and shape operator $h_i^j = g^{jk}h_{ik}$. We further define
	\begin{equation}
		\begin{split}
			\Phi_i^j = \delta_i^j - \tilde w h_i^j,
			&\qquad
			P_i^j = (\Phi^{-1})_i^j,
			\\
			V^i = P_k^i \nabla^k \tilde w,
			&\qquad
			\nu = \sqrt{1 + g_{ij}V^i V^j}.
		\end{split}
	\end{equation}
	Then $\tilde w$ satisfies the equation
	\begin{equation}
		\partial_{\tau} \tilde w = a^{ij} \nabla_i \nabla_j \tilde w + b^k \nabla_k \tilde w + c \tilde w,
	\end{equation}
	where
	\begin{equation}
		a^{ij} = P_k^i P_l^j g^{kl} - \frac{V^i V^j}{\nu^2},
	\end{equation}
	\begin{equation}
		b^m = 2 a^{mi} h_{ij} V^j - \frac{1}{\nu^2} h_{ij} V^i P^{jm} + P_k^m \left[ \tilde w a^{ij} \nabla_i h_j^k - \frac{1}{2} g^{kl} X_l + \tilde w \nabla^k \left( H - \frac{1}{2}\langle X, \nu \rangle \right) \right]    ,
	\end{equation}
	\begin{equation}
		c = P_k^i g^{kj} (h^2)_{ij} + \frac{1}{\nu^2} V^i V^j (h^2)_{ij} + \frac{1}{2}.
	\end{equation}
	Note that when $|\nabla \tilde w|\leq \delta_0$ for some $\delta_0\ll 1$, and $|A|^2\leq C$, we have 
	\begin{equation}
		|b^k \nabla_k \tilde w + c \tilde w|\leq C(1+|X|)(|\tilde w|+|\nabla\tilde w|).
	\end{equation}
	
	The next lemma proves the exponential growth of $\|\tilde w(\cdot,\tau)\|_{C^0}$, provided that it is bounded by a small constant.  
	\begin{lemma}\label{LmCM313} There exist $\dt_1>0$ and $C_1>0$ such that the following holds. Suppose $\tilde w:\ \cup_{\tau\in [\tau_0,\tau_1]} M_{\tau}\to \R$ is a solution to the last equation satisfying $\sup_{\tau\in[\tau_0,\tau_1]}\|\tilde w(\cdot,\tau)\|_{C^2}\leq\dt_1$. Then
		\[
		\|\tilde w(\cdot,\tau)\|_{C^0(M_\tau)}
		\leq e^{C_1(\tau-\tau_0)}
		\|\tilde w(\cdot,\tau_0)\|_{C^0(M_{\tau_0})},
		\qquad \tau\in[\tau_0,\tau_1].
		\]
	\end{lemma}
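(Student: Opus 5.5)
The estimate is a maximum-principle bound for the linear parabolic equation satisfied by $\tilde w$. The natural comparison function is $e^{C_1(\tau-\tau_0)}\|\tilde w(\cdot,\tau_0)\|_{C^0}$. The coefficients $a^{ij}$ are uniformly elliptic once $\|\tilde w\|_{C^2}\le\dt_1$: in that regime $P_i^j$ is close to $\delta_i^j$ because $|\tilde w h|$ is small (the type-I assumption bounds $|A|$ on $M_\tau$, as used above), and $V^iV^j/\nu^2$ is small. The zeroth-order coefficient $c$ is bounded by $C(|A|^2+1)\le C$. The difficulty is the drift term. It contains $-\frac12 P^m_k g^{kl}X_l$, which is unbounded at spatial infinity. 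However, it is a first-order term, so it does not enter the maximum principle at an interior maximum, where $\nabla\tilde w=0$. So I will not use the crude bound $|b^k\nabla_k\tilde w+c\tilde w|\le C(1+|X|)(\ldots)$. Instead I keep $b^k\nabla_k\tilde w$ separate and discard it at a spatial maximum.

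The steps are as follows. First, bound $|c|\le C_0$ using $|A|^2\le C$ and $\nu\ge1$, and check the ellipticity of $a^{ij}$. Second, set $\phi=e^{-C_1(\tau-\tau_0)}\tilde w$ with $C_1>C_0$. It satisfies $\pr_\tau\phi=a^{ij}\nabla_i\nabla_j\phi+b^k\nabla_k\phi+(c-C_1)\phi$. Third, at a positive interior maximum of $\phi$ we have $\nabla\phi=0$ and $a^{ij}\nabla_i\nabla_j\phi\le0$, so $\pr_\tau\phi<0$. Hence $\sup\phi$ is nonincreasing, and the same argument applies to $-\phi$.

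The main obstacle is that $M_\tau$ is noncompact, or that its graphical region grows like $\tau^{1/2}$, so the supremum may not be attained. I would handle this in one of two ways. In the first, since the original flow consists of closed hypersurfaces, each $M_\tau$ is compact. The domain of $\tilde w$ is then all of $M_\tau$ for $\tau$ in the compact interval $[\tau_0,\tau_1]$, and the maximum is attained; this is the case relevant to Theorem \ref{ThmGenericity}. In the second, for the general case, I would use a barrier of the form $\phi-\eta(1+|X|^2)e^{C'\tau}$ with $\eta\to0$. For this one needs $(\pr_\tau-L_a)|X|^2\ge -C(1+|X|^2)$; the drift contributes $-\frac12\langle X,\nabla|X|^2\rangle\sim -|X|^2$, which is of the admissible size. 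Letting $\eta\to0$ then gives the stated inequality.

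The role of the interval $[\tau_0,\tau_1]$ is only that the $C^2$ smallness keeps the coefficients in the controlled regime throughout.
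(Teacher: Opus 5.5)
Your proposal is correct and is essentially the paper's argument. The $C^2$ smallness together with the type-I bound on $|A|$ gives uniform ellipticity of $a^{ij}$ and a bound on $c$; the drift $b^k\nabla_k\tilde w$, including its unbounded $-\frac12 P^m_k g^{kl}X_l$ part, vanishes at a spatial extremum; and the maximum principle applied to $\pm\tilde w$ yields the exponential bound. Your normalization $\phi=e^{-C_1(\tau-\tau_0)}\tilde w$ and your observation that the extremum is attained because each $M_\tau$ is compact only make explicit what the paper's brief proof leaves implicit.
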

	\begin{proof}For $\dt_1$ small, $(a^{ij})$ is uniformly elliptic and $c$ is uniformly bounded. At a spatial maximum of $\tilde w$, the gradient term vanishes and $a^{ij}\nabla_i\nabla_j\tilde w\leq0$. Hence
		$\partial_\tau\max\tilde w\leq C_1\max\tilde w$. Applying the same argument to $-\tilde w$ and using the parabolic maximum principle proves the stated $C^0$ estimate.
	\end{proof}
	
	\begin{remark}
		This is the only place where the type-I assumption is used. 
	\end{remark}
	
	We will switch back to the graphs over $\cC_{n,k}$. Now suppose $M(\tau)$ and $\widetilde M(\tau)$ are both graphs over $\cC_{n,k}\cap B_{K\sqrt{\tau}}$ for some $K>K'>0$, on the time interval $[\tau_0,\tau_1]$, whose graphical functions are $u$ and $v$ respectively, with 
	\begin{enumerate}
		\item $\|u\|_{C^0(B_{K'\sqrt{\tau}})}<\eps_0$ and $\|v\|_{C^0(B_{K'\sqrt{\tau}})}<\eps_0$;
		\item on $B_{K\sqrt{\tau}}\backslash B_{K'\sqrt{\tau}}$, $c_0<u<c_1$ and $c_0<v<c_1$; $|\nabla u|\leq \eps_0$, $|\nabla v|\leq\eps_0$,
	\end{enumerate}
	and $w=v-u$. From the proof of Theorem C.2 in \cite{SX1}, we have $\|w(\cdot,\tau)\|_{C^0(B_{K\sqrt{\tau}})}\leq C\|\tilde w(\cdot,\tau)\|_{C^0(M(\tau))}$.
	
	Next, we prove that the $C^2$-norm of $w$ is bounded by the $C^0$-norm in a slightly smaller time interval.
	\begin{lemma}\label{lem_C2 of diff bound by C0}
		In the above setting, for every $K'<K$ with $K'\sqrt{\tau_0}+4n<K\sqrt{\tau_0-1}$, there exists $C_2>0$ only depending on $n$ and $\eps_0$, such that for $\tau\in[\tau_0+1,\tau_1]$,
		\begin{equation}
			\|w(\cdot,\tau)\|_{C^2(B_{K'\sqrt{\tau}})}\leq C_2e^{C_1(\tau-\tau_0)}\|w(\cdot,\tau_0)\|_{C^0(B_{K\sqrt{\tau_0}})}.
		\end{equation}
	\end{lemma}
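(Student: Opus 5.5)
The estimate should come from interior parabolic regularity for the linear equation satisfied by $w=v-u$, combined with the $C^0$ growth bound of Lemma \ref{LmCM313}. The order is: derive the equation for $w$, get a sup bound on a slightly larger region via Lemma \ref{LmCM313}, and then upgrade to $C^2$ by Schauder (or $L^p$) estimates on unit parabolic cylinders.

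\textbf{Step 1: the equation for $w$.} Both $u$ and $v$ solve the graphical RMCF equation $\pr_\tau f = Lf+\cQ(J^2f)$ over $\cC_{n,k}\cap B_{K\sqrt\tau}$. Subtracting the two equations and writing the difference of the nonlinear terms as $\int_0^1 D\cQ(J^2u+sJ^2w)\,ds\,[J^2w]$ gives a linear equation
\[
\pr_\tau w=\tilde a^{ij}\nabla_i\nabla_jw+\tilde b^i\nabla_iw-\tfrac12\langle x,\nabla w\rangle+\tilde c\,w .
\]
Its coefficients behave as follows.
\begin{itemize}
\item $\tilde a^{ij}$ is uniformly elliptic. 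This uses the smallness of $|\nabla u|,|\nabla v|$ from hypotheses (1)--(2), together with $C^2$ bounds on $u,v$.
\item $\tilde b^i,\tilde c$ are bounded.
\item The only unbounded coefficient is the drift $-\frac12\langle x,\nabla w\rangle$.
\end{itemize}
The $C^2$ (indeed $C^{2,\alpha}$) bounds on $u,v$ needed for these coefficient estimates come from the type-I curvature bound and the graphical bounds on the region.

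\textbf{Step 2: removing the drift.} I would undo the rescaling locally. Around any point $x_0$ with $|x_0|\leq K'\sqrt\tau$ and time $\tau\in[\tau_0+1,\tau_1]$, I pass to the unrescaled MCF picture, or equivalently use the change of variables $x\mapsto e^{(s-\tau)/2}x$ on a unit time interval $s\in[\tau-1,\tau]$. In these coordinates the drift term is absorbed and the operator becomes a uniformly parabolic operator with bounded, Hölder-continuous coefficients on a parabolic cylinder of unit size.

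During the time interval $[\tau-1,\tau]$, the backward flow lines of the drift move $x_0$ outward by a factor of at most $e^{1/2}$. They therefore stay inside $B_{K\sqrt{s}}$, and this is where the hypothesis $K'\sqrt{\tau_0}+4n<K\sqrt{\tau_0-1}$ is used, with the margin $4n$ absorbing the unit spatial radius of the cylinder. I am least sure about this step: I have to check that the geometric constants work out uniformly in $\tau$ and that the $\langle x,\cdot\rangle$ term does not produce a $\sqrt\tau$ loss.

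\textbf{Step 3: Schauder on unit cylinders.} Interior Schauder estimates give
\[
\|w(\cdot,\tau)\|_{C^2(B_1(x_0))}\leq C\sup_{[\tau-1,\tau]\times B_{2}(x_0)}|w| .
\]
The constant depends only on $n$ and $\eps_0$ through the ellipticity and Hölder bounds of the coefficients.

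\textbf{Step 4: the sup bound.} The right-hand side of Step 3 is controlled as follows.
\begin{itemize}
\item By the comparison stated before the lemma, $|w|$ is bounded by $C\|\tilde w\|_{C^0(M_s)}$.
\item By Lemma \ref{LmCM313}, $\|\tilde w\|_{C^0(M_s)}\leq e^{C_1(s-\tau_0)}\|\tilde w(\cdot,\tau_0)\|_{C^0}$.
\item Converting back, $\|\tilde w(\cdot,\tau_0)\|_{C^0}\leq C\|w(\cdot,\tau_0)\|_{C^0(B_{K\sqrt{\tau_0}})}$.
\end{itemize}
Combining these with Step 3 and taking the supremum over $x_0$ yields the claim. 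The smallness hypothesis $\|\tilde w\|_{C^2}\leq\dt_1$ of Lemma \ref{LmCM313} is assumed to hold along the flow, as in the intended application.

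The main obstacle is making the constants of the Schauder step uniform in $\tau$ despite the linearly growing drift. The natural fix is the local unrescaling of Step 2; the alternative is to work directly with the MCF, where the type-I bound makes the curvature scale $\sqrt{-t}$ uniform.
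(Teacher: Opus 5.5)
Your Steps 1--3 are the paper's proof. The paper fixes an axial point $p$ with $|p|<K'\sqrt\tau$ and works in coordinates centered at $p(s)=e^{-(\tau-s)/2}p$, $s\in[\tau-1,\tau]$. This moving center is exactly the characteristic of the drift $-\frac12\langle y,\nabla_y\,\cdot\,\rangle$ that your substitution $x\mapsto e^{(s-\tau)/2}x$ follows. Interior Schauder on $B_{4n}(p(s))$ then bounds $\|w(\cdot,\tau)\|_{C^{2,\alpha}(B_{2n}(p))}$ by $\sup_{s\in[\tau-1,\tau]}\|w(\cdot,s)\|_{C^0(B_{4n}(p(s)))}$.

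One correction to Step 2: backward in time the characteristics move \emph{inward}, $|p(s)|=e^{-(\tau-s)/2}|p|\leq|p|$, not outward by $e^{1/2}$. Your own substitution has factor $e^{(s-\tau)/2}\leq 1$. This is why the hypothesis suffices. You only need $K'\sqrt\tau+4n<K\sqrt{\tau-1}$, and this follows from the assumption at $\tau_0$ because $K\sqrt{\tau-1}-K'\sqrt\tau$ is increasing in $\tau$ when $K>K'$. With an outward factor $e^{1/2}$ the stated hypothesis would not be enough.

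The genuine gap is the third bullet of Step 4. The comparison quoted before the lemma goes only one way, $\|w(\cdot,s)\|_{C^0(B_{K\sqrt s})}\leq C\|\tilde w(\cdot,s)\|_{C^0(M_s)}$. The reverse bound $\|\tilde w(\cdot,\tau_0)\|_{C^0(M_{\tau_0})}\leq C\|w(\cdot,\tau_0)\|_{C^0(B_{K\sqrt{\tau_0}})}$ is false in general, because $\tilde w(\cdot,\tau_0)$ is defined on all of $M_{\tau_0}$ and may be nonzero only far outside the graphical ball. For instance, take $\tilde w(\cdot,\tau_0)\geq0$, $\not\equiv0$, vanishing on the part of $M_{\tau_0}$ lying over a neighborhood of $B_{K\sqrt{\tau_0}}$. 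Then $w(\cdot,\tau_0)=0$ on $B_{K\sqrt{\tau_0}}$. Yet the strong maximum principle for the linear equation satisfied by $\tilde w$ gives $\tilde w>0$ on $M_\tau$ for $\tau>\tau_0$, so $w(\cdot,\tau)\neq0$ on $B_{K'\sqrt\tau}$. Hence no argument can give the lemma with the local norm on the right.

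The paper's closing sentence (``Lemma \ref{LmCM313} implies the desired bound'') silently uses the same reverse comparison. What Steps 1--3 plus the first two bullets of Step 4 actually prove is the estimate with $\|\tilde w(\cdot,\tau_0)\|_{C^0(M_{\tau_0})}$ on the right-hand side, i.e.\ with the global size of the perturbation. That is also the form used later in Lemma \ref{LmE}, where the right-hand side is $C\eta$. You should state and prove it in that form rather than converting back.
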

	
	\begin{proof}
		Fix $\tau\in[\tau_0+1,\tau_1]$ and
		$p\in(\{0\}\times\R^k)\cap B_{K'\sqrt\tau}$.  For
		$s\in[\tau-1,\tau]$ use the moving axial center
		\[
		p(s)=e^{-(\tau-s)/2}p.
		\]
		In coordinates centered at $p(s)$, the velocity of the center cancels
		the constant part of the Ornstein--Uhlenbeck drift.  The coefficients of
		the difference equation are therefore uniformly parabolic with uniformly
		bounded local norms on $B_{4n}(p(s))$.  The choice of $K'$ ensures that
		these balls lie in $B_{K\sqrt s}$.  The interior parabolic Schauder
		estimate gives
		\begin{equation}
			\|w(\cdot,\tau)\|_{C^{2,{\alpha}}(B_{2n}(p))}\leq C_2 \sup_{s\in[\tau-1,\tau]}\|w(\cdot,s)\|_{C^0(B_{4n}(p(s)))}.
		\end{equation}
		Then Lemma \ref{LmCM313} implies the desired bound for $C^2$-norm of $w$.
	\end{proof}
	
	For simplicity, in the following, we shall choose $K$ so large that $K^2\gg C_1$, and $C_1$ slightly larger so that $\|w(\cdot,\tau)\|_{C^2(B_{K\sqrt{\tau}})}\leq e^{C_1(\tau-\tau_0)}\|w(\cdot,\tau_0)\|_{C^0(B_{2K\sqrt{\tau_0}})}$.  We use $K\sqrt\tau$ as the outer graphical and cutoff scale, write $\chi=\chi_{K\sqrt\tau}$, and use the smaller radius $r(\tau)=\tau^\kappa$ in the nonlinear projection estimates. Here $\vartheta>4/5$ and $1-\vartheta<\kappa<\vartheta/4$ are the same proof parameters chosen in Section \ref{S_stability}; the intervening region is controlled by the Gaussian weight.
	
	\subsection{Evolution of the neutral modes}\label{SS:evolution of neutral}
	
	Let $w=v-u$. The normal form, the type-I bound, and interior
	parabolic estimates give the following bounds (so they are consequences
	of the standing hypotheses):
	\begin{enumerate}
		\item $\|u\|_{C^2(B_{2r(\tau)})}+
		\|v\|_{C^2(B_{2r(\tau)})}\leq C\tau^{2\kappa-1}$, while their
		$C^2$ norms on $B_{K\sqrt\tau}$ are bounded by a constant depending
		on $K$; moreover
		$\|u\|_{H^1(B_{K\sqrt{\tau}})}+
		\|v\|_{H^1(B_{K\sqrt{\tau}})}\leq C\tau^{-1}$.
		\item $\vartheta$ is a fixed constant in $(4/5,1)$.
		\item ($w$ has dominance of zero modes) $\|\Pi_{\neq 0}\chi w\|_{H^1}\leq \xi\|\Pi_{=0}\chi w\|_{H^1}$.
	\end{enumerate}
	From Proposition A.1 of the Appendix of \cite{SunXue25_GenericI}, $w$ satisfies the equation
	\begin{equation}
		\begin{split}
			\pr_{\tau} w
			&=
			L_{\Sigma} w
			+\cP_0(v,\nabla v,\nabla^2 v,u,\nabla u,\nabla^2 u)w
			\\&
			\ \ +\cP_1(v,\nabla v,\nabla^2 v,u,\nabla u,\nabla^2 u)\cdot\nabla w+\cP_2(v,\nabla v,\nabla^2 v,u,\nabla u,\nabla^2 u)\cdot\nabla^2 w.
		\end{split}
	\end{equation}
	Note that the quadratic difference is given by
	\begin{equation}\label{eq:quadratic error}
		\begin{split}
			&\left(v^2+4v\Delta_\theta v+2|\nabla_\theta v|^2\right)
			-
			\left(u^2+4u\Delta_\theta u+2|\nabla_\theta u|^2\right) 
			\\
			&=(2u+w+4\Delta_\theta u+4\Delta_\theta w)w+\langle (4\nabla_\theta u+2\nabla_\theta w),\nabla_\theta w\rangle+ 4u\Delta_\theta w
			\\
			&=: \bar \cP_0 w+\bar \cP_1 \cdot\nabla_\theta w+\bar \cP_2 \cdot\nabla_\theta^2 w.   \end{split}
	\end{equation}
	\begin{proposition}[Projection estimate from the Part-I nonlinearity]
		\label{PropNeutralProjection}
		The quadratic term computed in Proposition A.1 of
		\cite{SunXue25_GenericI} is
		\begin{equation}\label{EqQ-PartII}
			\cQ_2(f)=-\frac1{2\varrho}
			\left(f^2+4f\Delta_\theta f+2|\nabla_\theta f|^2\right).
		\end{equation}
		For functions $a,b$, define its polarization by
		\[
		\cB_2(a,b)=-\frac1{2\varrho}
		\left(ab+2a\Delta_\theta b+2b\Delta_\theta a
		+2\langle\nabla_\theta a,\nabla_\theta b\rangle\right).
		\]
		Let $\phi$ be a fixed zero eigenfunction of $L$, let $w_0=\PiZ(\chi w)$, $z=\PiNZ(\chi w)$, and put
		\[
		\epsilon=\max\{\|u\|_{C^2(B_{2r})},\|v\|_{C^2(B_{2r})}\}\leq\eps_0,
		\qquad r=\tau^\kappa,\quad R=K\sqrt\tau.
		\]
		Then
		\begin{align}
			\label{EqCorrectProjection}
			&\left|\frac d{d\tau}\langle\chi w,\phi\rangle
			-\left\langle
			\cB_2\bigl(\chi(u+v),w_0\bigr),\phi
			\right\rangle\right| \notag\\
			&\quad\leq
			C_\phi(1+r^2)\epsilon\|z\|_{H^1}
			+C_\phi\epsilon^2\|\chi w\|_{H^1}
			+C_\phi e^{-cr^2}\|w\|_{C^2(B_R)}
			+|\langle\widetilde\cP,\phi\rangle|.
		\end{align}
		For the cutoff $R=K\sqrt\tau$, the last term is bounded by
		$C(1+K^2\tau)^2e^{-K^2\tau/4}\|w\|_{C^2}$ under the annular graphical
		bound.
	\end{proposition}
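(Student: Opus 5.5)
We test the cutoff difference equation against $\phi$ and isolate the only term of order $\epsilon\|w_0\|$, the polarized quadratic term; everything else goes into the error. Since $\chi=\chi_R$ with $R=K\sqrt\tau$, the difference $\chi w$ satisfies the analogue of \eqref{EqCutOff}:
\[
\partial_\tau(\chi w)=L(\chi w)+\chi\bigl(\cQ(J^2v)-\cQ(J^2u)\bigr)+\widetilde\cP,
\]
where $\widetilde\cP=w(-\Delta\chi+\partial_\tau\chi)-2\langle\nabla\chi,\nabla w\rangle+\frac12\langle x,\nabla\chi\rangle w$ is supported on $\mathbb A_R$. We pair with $\phi$ in $L^2$. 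Because $L$ is self-adjoint in the Gaussian space and $L\phi=0$, the term $\langle L(\chi w),\phi\rangle$ vanishes. The rotation gauge term $\chi\cA$ from Section \ref{SSSBrendle} has no component along rotation modes. For $\phi$ among the other zero modes, it is bounded via Lemma \ref{LmA} by $\|\overline\cB\|$, which is controlled as in the remaining terms. The annular term is simply kept as $|\langle\widetilde\cP,\phi\rangle|$. Since $\phi$ is a polynomial of degree $2$, on $\mathbb A_R$ we have $|\widetilde\cP\,\phi|\lesssim (1+R^2)^2\|w\|_{C^2}$. Integrating against $e^{-|x|^2/4}$ over $|y|\approx R$ gives the stated bound $C(1+K^2\tau)^2e^{-K^2\tau/4}\|w\|_{C^2}$.

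Next, the nonlinearity. We write $\cQ=\cQ_2+\cQ_3$, where $\cQ_2$ is the quadratic part \eqref{EqQ-PartII} and $|\cQ_3(J^2f)|\lesssim|J^2f|^3$. Then $\cQ_2(v)-\cQ_2(u)=\cB_2(u+v,w)$ exactly, by polarization; this is the identity \eqref{eq:quadratic error}. The cubic difference is bounded by $C\epsilon^2(|w|+|\nabla w|+|\nabla^2w|)$ on $B_{2r}$. We split space at $|y|\le r$ versus $|y|\ge r$.

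On $B_r$ we have $\chi=1$ and $|u|,|v|\le\epsilon$ in $C^2$. Also $\cB_2$ involves only $\theta$-derivatives, and $\phi$ is smooth in $\theta$. So we integrate by parts on the sphere factor to move $\Delta_\theta$ and $\nabla_\theta$ off $w$ and onto $(u+v)\phi$. After this, $\langle\cB_2(\chi(u+v),\chi w),\phi\rangle$ and the cubic part need only $H^1$ (in fact $L^2$) control of $\chi w$. We then decompose $\chi w=w_0+z$. The $w_0$ part gives the main term. The $z$ part gives $|\langle\cB_2(\chi(u+v),z),\phi\rangle|\le C_\phi\sup_{B_r}\bigl(|J^2(u+v)|\,|\phi|\bigr)\|z\|_{H^1}\le C_\phi(1+r^2)\epsilon\|z\|_{H^1}$, the factor $1+r^2$ coming from the growth of $\phi$. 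The cubic part gives $C_\phi\epsilon^2\|\chi w\|_{H^1}$. Here we replace $\chi$ by $\chi_r$ inside the nonlinear coefficients so that the integration by parts stays inside $B_{2r}$. The region $|y|\ge r$ contributes at most $C_\phi\int_{|y|\ge r}(1+|y|^2)\,|J^2w|\,e^{-|x|^2/4}\le C_\phi e^{-cr^2}\|w\|_{C^2(B_R)}$, using that $u,v$ have bounded $C^2$ norm on $B_{K\sqrt\tau}$.

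The main obstacle is the second-order term $u\Delta_\theta w$. Bounding it directly would require $\|\chi w\|_{H^2}$, which is not available in \eqref{EqCorrectProjection}. The plan is to integrate by parts in $\theta$ only, which is legitimate because the cutoff depends on $y$ alone and $\phi$ is smooth. This transfers the second derivatives to $u\phi$, and the $C^2$ bound on $u$ on $B_{2r}$ pays for them. A secondary issue is that $\cB_2$ must be taken with $\chi(u+v)$ rather than $u+v$. The discrepancy is supported where $\chi\neq1$, that is at $|y|\ge R\gg r$, and is therefore absorbed into the $e^{-cr^2}$ term.
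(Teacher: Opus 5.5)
Your argument is essentially the paper's proof: exact polarization of $\cQ_2$, self-adjointness of $L$ with $L\phi=0$, the splitting $\chi w=w_0+z$ with the spherical derivatives moved off $z$ by integrating by parts in $\theta$ (where the Gaussian weight is constant), the same treatment of the cubic remainder (the paper cites Proposition A.1(3) of \cite{SunXue25_GenericI} for this), the Gaussian tail outside $B_r$, and the annular commutator terms kept as $\widetilde\cP$.

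The one step that fails as written is your handling of the rotation gauge term, which the paper's proof does not mention at all.

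First, $\chi\cA$ is not free of rotation components; it consists of them. Write $B=\dot G_\tau G_\tau^{-1}\in\mathfrak{so}(n+1)$ in blocks with respect to $\R^{n-k+1}\times\R^k$, and take $x=(\theta,y)\in\cC_{n,k}$. Antisymmetry then gives $\langle Bx,\mathbf n(x)\rangle=\varrho^{-1}\langle B_{12}y,\theta\rangle\in\operatorname{span}\{\theta_iy_j\}$. What has no rotation component is the full right-hand side of \eqref{eq:bar equation}.

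Second, you bound $\langle\chi\cA,\phi\rangle$ via Lemma \ref{LmA} by $2\|\overline\cB\|_{L^2}\|\phi\|_{L^2}$. This produces an error as large as the whole nonlinearity. It is in particular not smaller than the main term $\langle\cB_2(\chi(u+v),w_0),\phi\rangle$, and it involves $\|\chi w\|_{H^2}$. So it cannot be absorbed into the right-hand side of \eqref{EqCorrectProjection}.

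The correct observation is exact orthogonality. The cutoff $\chi=\chi_R(|y|)$ does not depend on $\theta$, and $\cA$ is odd in $\theta$. Hence $\langle\chi\cA,\phi\rangle=0$ for the $\theta$-independent zero modes $h_2(y_i)$ and $y_iy_j$, which are the only ones entering Lemma \ref{lem:neutral-block-ode}.

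A smaller point concerns the cubic remainder. It contains $\nabla_y^2 w$, so there you must also integrate by parts in $y$; your $\chi_r$ localization takes care of the boundary. This closes with only $C^2$ control of $u,v$ because the remainder is linear in second derivatives, which allows a single integration by parts there. Your parenthetical ``in fact $L^2$'' therefore applies to the quadratic term only.
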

	\begin{proof}
		Write $\cQ=\cQ_2+\cC$ as in Proposition A.1 of
		\cite{SunXue25_GenericI} and \eqref{EqQ-PartII}. Exact polarization gives
		\[
		\cQ_2(J^2(\chi v))-\cQ_2(J^2(\chi u))
		=\cB_2\bigl(\chi(u+v),\chi w\bigr).
		\]
		Self-adjointness and $L\phi=0$ remove the linear term. Replacing $\chi w=w_0+z$ by $w_0$ in the last display compensate the first term on the right of \eqref{EqCorrectProjection}: integrate each spherical Hessian falling on $z$ once against the Gaussian measure, so only $\|z\|_{H^1}$ is used. Proposition A.1(3) of \cite{SunXue25_GenericI} applies the same integration by parts to the cubic remainder and gives
		\[
		\left|\langle
		\cC(J^2(\chi v))-\cC(J^2(\chi u)),\phi\rangle\right|
		\leq C_\phi\epsilon^2\|\chi w\|_{H^1}
		+C_\phi e^{-cr^2}\|w\|_{C^2(B_R)}.
		\]
		The last terms in these two estimates bound the region outside $B_r$ using its Gaussian weight. All terms created by commuting the cutoff are supported in the outer annulus and are of the order of $\widetilde\cP$, which proves the assertion.
		
	\end{proof}
	
	For a function $f$, let $\mathsf M(f)$ denote the symmetric matrix of its quadratic Hermite coefficients with exactly the normalization used in Section 5.5 of \cite{SunXue25_GenericI}; explicitly,
	\[
	\mathsf M(f)_{ii}=\sqrt2c_0\langle f,h_2(y_i)\rangle,
	\qquad
	\mathsf M(f)_{ij}=\langle f,h_1(y_i)h_1(y_j)\rangle\quad(i\ne j).
	\]
	The gauge fixed in Section \ref{SSSBrendle} removes the other neutral modes. Let
	\begin{equation}\label{eq_expression of errors in ODE}
		A=\mathsf M(\chi w)=
		\begin{pmatrix}A_1&A_{12}\\ A_{12}^{T}&A_2\end{pmatrix},
		\qquad
		A_1\in\operatorname{Sym}_{\ell},\quad
		A_2\in\operatorname{Sym}_{k-\ell}.
	\end{equation}
	
	Using \eqref{eq_expression of errors in ODE}, we obtain the following block form of the matrix equation.
	\begin{lemma}[Evolution of the neutral coefficients]\label{lem:neutral-block-ode}
		Suppose the hypotheses of Proposition \ref{PropNeutralProjection} hold and the normal form of the unperturbed graph is degenerate in the first $\ell$ directions and nondegenerate in the remaining directions. In the normalization above,
		\[
		\mathsf M(\chi u)=\frac1{\gamma\tau}
		\begin{pmatrix}0&0\\0&I_{k-\ell}\end{pmatrix}+R_u,
		\qquad |R_u|\leq C\tau^{-1-\vartheta},
		\]
		where $\gamma>0$ is the dimensional constant from \cite{SunXue25_GenericI}. If
		\[
		\|\PiNZ(\chi w)\|_{H^1}
		\leq\tau^{-\vartheta}\|\PiZ(\chi w)\|_{H^1},
		\]
		then the coefficient blocks satisfy the equations
		\begin{equation}\label{EqODEDense}
			\begin{aligned}
				\dot A_1
				&=-\gamma\bigl(A_1^2+A_{12}A_{12}^{T}\bigr)
				+\Rem_1+\Err_1,\\
				\dot A_2
				&=-\frac{2}{\tau}A_2
				-\gamma\bigl(A_2^2+A_{12}^{T}A_{12}\bigr)
				+\Rem_2+\Err_2,\\
				\dot A_{12}
				&=-\frac1\tau A_{12}
				-\gamma\bigl(A_1A_{12}+A_{12}A_2\bigr)
				+\Rem_{12}+\Err_{12}.
			\end{aligned}
		\end{equation}
		Here $\Err_\bullet$ denotes the corresponding projected cutoff error. With $|A|=|A_1|+|A_2|+|A_{12}|$ and $\sigma=\min\{\vartheta-4\kappa,1-4\kappa\}>0$,
		\begin{equation}\label{eq:Qbound}
			|\Rem_1|+|\Rem_2|+|\Rem_{12}|
			\leq C\left(\tau^{-\sigma}|A|^2
			+\tau^{-1-\sigma}|A|\right).
		\end{equation}
	\end{lemma}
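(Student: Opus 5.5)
The plan is to project the difference equation onto the quadratic Hermite modes, using Proposition \ref{PropNeutralProjection}, and then compute the leading bilinear term explicitly in block form. Fix $\phi=h_2(y_i)$ or $h_1(y_i)h_1(y_j)$, normalized as in the definition of $\mathsf M$. Proposition \ref{PropNeutralProjection} gives
\[
\frac{d}{d\tau}\langle\chi w,\phi\rangle=\bigl\langle\cB_2(\chi(u+v),w_0),\phi\bigr\rangle+(\text{errors}).
\]
Write $\chi(u+v)=2\chi u+\chi w$ and split each function into its neutral part and the rest. Since every neutral mode is independent of $\theta$ after the gauge of Section \ref{SSSBrendle}, the spherical derivatives in $\cB_2$ vanish on neutral-neutral products. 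Hence $\cB_2(a_0,b_0)=-\frac1{2\varrho}a_0b_0$, and projecting onto $\phi$ reduces to the triple Hermite integrals of Lemma \ref{LmTriple}. This is exactly the computation behind \eqref{EqM} in \cite{SunXue25_GenericI}. Writing $\mathsf M$ for the coefficient matrix, it should yield
\[
\Pi_{\phi}\,\cB_2(a_0,b_0)\;\longleftrightarrow\;-\tfrac{\gamma}{2}\bigl(\mathsf M(a)\mathsf M(b)+\mathsf M(b)\mathsf M(a)\bigr),
\]
with the same constant $\gamma$ as in \eqref{EqM}. Consistency with the Riccati equation $\rmM'=-\gamma\rmM^2$ for a single flow fixes this normalization. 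I would take it as given from Section 5.5 of \cite{SunXue25_GenericI} rather than recompute $A_{2,2,2}$ and $A_{1,1,2}$.

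Next substitute the two pieces. The contribution of $\chi w$ gives $-\gamma A^2$. The contribution of $2\chi u$ gives $-\gamma(\mathsf M(\chi u)A+A\mathsf M(\chi u))$. Insert $\mathsf M(\chi u)=\frac1{\gamma\tau}\mathrm{diag}(0,I_{k-\ell})+R_u$ and read off the blocks:
\begin{itemize}
\item In the $A_1$ block the projection $\mathrm{diag}(0,I)$ kills the linear term, leaving $-\gamma(A_1^2+A_{12}A_{12}^T)$.
\item In the $A_2$ block the linear term is $-\frac{2}{\tau}A_2$.
\item In the off-diagonal block it is $-\frac1\tau A_{12}$.
\end{itemize}
The quadratic terms are the block entries of $A^2$. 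The correction from $R_u$ is bounded by $C\tau^{-1-\vartheta}|A|\le C\tau^{-1-\sigma}|A|$, since $\sigma\le\vartheta$; this goes into $\Rem$. The non-quadratic pieces of $\chi u$ and $\chi w$ are controlled by \eqref{EqNF-small-scale} and the hypothesis $\|\PiNZ(\chi w)\|\le\tau^{-\vartheta}\|\PiZ(\chi w)\|$. They contribute $O(\tau^{-\vartheta})$ relative errors, again of the form \eqref{eq:Qbound}.

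Finally, bound the error terms of \eqref{EqCorrectProjection} using standing hypothesis (1), $\epsilon\le C\tau^{2\kappa-1}$, and $\|\chi w\|\approx|A|$, which holds by zero-mode dominance.
\begin{itemize}
\item $(1+r^2)\epsilon\|z\|\le C\tau^{2\kappa}\tau^{2\kappa-1}\tau^{-\vartheta}|A|$, which is $O(\tau^{-1-\sigma}|A|)$ because $4\kappa-\vartheta\le-\sigma$.
\item $\epsilon^2\|\chi w\|\le C\tau^{4\kappa-2}|A|$, which is also fine.
\end{itemize}
The Gaussian tail $e^{-cr^2}\|w\|_{C^2}$ and the term $\langle\widetilde\cP,\phi\rangle$ are not proportional to $|A|$. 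They are collected into $\Err_\bullet$, the projected cutoff error, which the statement deliberately leaves separate.

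The main obstacle is the cross term $\cB_2(\chi u,z)$ and the non-neutral part of $\chi u$ acting on $w_0$. Here spherical derivatives do not vanish, and $\chi u$ is only small in $C^2$ on $B_{2r}$ rather than on $B_R$. I would handle it as in Proposition \ref{PropNeutralProjection}: integrate the spherical Hessian by parts against the Gaussian measure, and use the weight to dispose of $B_R\setminus B_r$. The delicate point is that a term like $\tau^{-1}|A|$ without the extra $\tau^{-\sigma}$ would destroy the block structure. So I must check that every linear-in-$A$ term other than the explicit $-\frac{2}{\tau}A_2$ and $-\frac{1}{\tau}A_{12}$ carries either $R_u$ or a non-neutral factor. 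Each such term is therefore an extra $\tau^{-\vartheta}$ or $\tau^{4\kappa-1}$ smaller.
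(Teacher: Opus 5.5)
Your route is the paper's: Proposition \ref{PropNeutralProjection} and the Hermite product identities give $A'=-\gamma\bigl((\mathsf M(\chi u)+A)^2-\mathsf M(\chi u)^2\bigr)+\Rem+\Err$, and substituting $\mathsf M(\chi u)=(\gamma\tau)^{-1}\operatorname{diag}(0,I_{k-\ell})+R_u$ and multiplying blocks gives \eqref{EqODEDense}. Your block algebra and the exponent checks $4\kappa-1-\vartheta\le-1-\sigma$ and $4\kappa-2\le-1-\sigma$ are correct.

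The gap is how you dispose of the term $C_\phi e^{-cr^2}\|w\|_{C^2(B_R)}$ in \eqref{EqCorrectProjection}. You call it not proportional to $|A|$ and file it under $\Err_\bullet$. But $r=\tau^\kappa$ is the inner radius, not the cutoff radius $R=K\sqrt\tau$. This term comes from the nonlinearity on $B_R\setminus B_r$, where $u,v$ are only $C^2$-bounded, so it is not a projected cutoff error. In the paper $\Err_\bullet$ is only $\langle\widetilde{\cP},\phi\rangle=O\bigl((1+K^2\tau)^2e^{-K^2\tau/4}\|w\|_{C^2}\bigr)$. That is the bound Lemma \ref{LmE} integrates against the crude growth $\|w(T+s)\|_{C^2}\le C\eta e^{C_1 s}$, and later $Ce^{(T+s)/2}$. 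A factor $e^{-c\tau^{2\kappa}}$ with $2\kappa<1/2$ cannot beat that growth. So with your $\Err_\bullet$, the integrated bound \eqref{EqIntegratedCutoff} cannot be recovered, and with it the $o(\eta)$ error control that Lemma \ref{LmSolODE} needs.

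The paper absorbs the region outside $B_{r(\tau)}$ into $\Rem$ using its Gaussian weight. What is missing in your proposal is that this region must be estimated through $\|\chi w\|_{H^1}$, not $\|w\|_{C^2}$. Localize with a smooth cutoff at scale $r$. On $B_R\setminus B_r$ the coefficients $\cP_0,\cP_1,\cP_2$ of the nonlinear difference are bounded in terms of $K$, and so is $\nabla\cP_2$ by interior estimates. Integrate the $\nabla^2 w$ term by parts once and pair with $\phi$ by Cauchy--Schwarz. Since $\phi$ is a polynomial, $\|\phi\|_{H^1(\{|y|\ge r\})}\le Cr^{C}e^{-r^2/8}$. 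The contribution is therefore at most $Ce^{-c\tau^{2\kappa}}\|\chi w\|_{H^1}\le Ce^{-c\tau^{2\kappa}}|A|\le C\tau^{-1-\sigma}|A|$ by zero-mode dominance, which fits \eqref{eq:Qbound}. Only the terms from differentiating $\chi_{K\sqrt\tau}$ then remain in $\Err_\bullet$.

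Two smaller remarks. The cross term $\cB_2(\chi u,z)$ you single out as the main obstacle is already the first error term in \eqref{EqCorrectProjection}. For $\PiNZ(\chi u)$ paired against $w_0$, the $\Delta_\theta$ contribution vanishes identically, because $w_0\phi$ is $\theta$-independent and the weight is constant on the sphere factor.
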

	
	\begin{proof}
		Proposition \ref{PropNeutralProjection}, the normal-form estimate, and the Hermite product identities in Lemma \ref{LmTriple} give the matrix identity
		\[
		A'=-\gamma\bigl((\mathsf M(\chi u)+A)^2
		-\mathsf M(\chi u)^2\bigr)
		+\Rem+\Err.
		\]
		Substituting $\mathsf M(\chi u)=(\gamma\tau)^{-1} \operatorname{diag}(0,I_{k-\ell})+R_u$ and multiplying the block matrices gives \eqref{EqODEDense}. Terms containing $R_u$ are bounded by $C\tau^{-1-\vartheta}|A|$. On $B_{2r(\tau)}$, the normal form gives $\epsilon=O(\tau^{2\kappa-1})$; the neutral-cone estimate and \eqref{EqCorrectProjection} therefore bound the remaining non-neutral and higher-order contributions by the right-hand side of \eqref{eq:Qbound}. The portion outside $B_{r(\tau)}$ is absorbed by its Gaussian weight. This proves the lemma.
	\end{proof}
	
	\subsection{Constructing the initial perturbation}\label{SS:ConstructInitialPerturbation}
	
	Now we suppose $M(\tau)$ is a RMCF that converges to $\cC_{n,2}$ locally smoothly as $t\to\infty$. Let $\eps_0$ be a small number as in Proposition \ref{PropCentering}, $K\gg0$ is a number to be determined, and suppose $T$ is a large time such that $M_T\cap B_{2K\sqrt T}$ can be written as the graph of $u:\ \cC_{n,k}\cap B_{2K\sqrt T}\to \R$ with that the normal form theorem \ref{ThmNF-sqrtt} applies to $M(\tau)$. Later, we will be working on $B_{K\sqrt T}\subset B_{2K\sqrt T}$, so that we can apply Lemma \ref{lem_C2 of diff bound by C0}.
	
	Let us first recall Lemma 5.2 of \cite{HV1}, and its Sobolev version proved in \cite{SX3}. 
	\begin{lemma}\label{LmHV}Let $M_{\tau}$ be a RMCF and $\mathcal S(t,s)$ be the fundamental solution to the linearized equation $\partial_{\tau} v=L_{M_{\tau}}v$ such that the function $\mathcal S(t,s) v(s)$ solves the equation with initial condition $v(s):\ M_s\to \R$. Then the operator $\mathcal S(\tau,0):\ L^2(M_0)\to H^p(M_{\tau}),\ p\in \mathbb N,$ has dense image. 
	\end{lemma}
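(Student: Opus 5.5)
This is the classical backward-uniqueness/duality argument behind Lemma 5.2 of \cite{HV1}. Dense image of $\mathcal S(\tau,0)$ in $H^p(M_\tau)$ is equivalent to injectivity of the adjoint. The plan therefore has three steps: identify the adjoint with a backward parabolic problem, show that its solutions determine their data uniquely, and then pass from the $L^2$ pairing to the $H^p$ pairing.

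\textbf{Step 1: the adjoint.} Suppose $\psi\in H^p(M_\tau)$ is $H^p$-orthogonal to the image of $\mathcal S(\tau,0)$. By Riesz representation we rewrite this as $\langle \mathcal S(\tau,0)f,\Psi\rangle_{L^2(M_\tau)}=0$ for all $f\in L^2(M_0)$, with $\Psi:=\sum_{j\le p}(\nabla^*)^j\nabla^j\psi$ (Gaussian-weighted adjoints), understood as a distribution of order $2p$. Let $\Phi(s)$ solve the adjoint backward equation $\partial_s\Phi=-L^*_{M_s}\Phi$ on $s\in[0,\tau]$, with $\Phi(\tau)=\Psi$. Here $L^*$ is $L$ plus lower-order terms coming from the time variation of the weighted measure on $M_s$. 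Differentiating $\langle \mathcal S(s,0)f,\Phi(s)\rangle$ in $s$ shows the pairing is constant, so $\langle f,\Phi(0)\rangle_{L^2(M_0)}=0$ for every $f$. Hence $\Phi(0)=0$.

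\textbf{Step 2: backward uniqueness.} We need to conclude $\Phi\equiv0$ from $\Phi(0)=0$, which is a forward-time uniqueness statement for the backward equation. Reversing time, $\Phi$ solves a forward parabolic equation run backwards, that is, a backward parabolic problem on the compact interval $[0,\tau]$ with smooth coefficients; on the initial portion of the flow the surfaces $M_s$ are smooth. We would invoke backward uniqueness via the Agmon–Nirenberg log-convexity of $\|\Phi(s)\|_{L^2}$ (frequency function), or the Lions–Malgrange / Escauriaza–Seregin–Šverák Carleman estimates. The principal part is the self-adjoint weighted Laplacian, and the non-self-adjoint perturbation is first order with bounded coefficients, which is exactly the setting of the log-convexity method. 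This gives $\Phi\equiv0$, hence $\Psi=0$. Since $\sum_j(\nabla^*)^j\nabla^j$ is injective and coercive on $H^p$, it follows that $\psi=0$, so the image is dense.

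\textbf{Main obstacle.} The hardest step is Step 2 in the weighted, noncompact and time-dependent setting, together with the low regularity of $\Psi$. I would handle $\Psi\in H^{-p}$ by first smoothing: run the backward flow a short time, so that parabolic regularization places $\Phi(\tau-\delta)$ in $L^2$, and then apply log-convexity on $[0,\tau-\delta]$. Unique continuation in time then propagates $\Phi\equiv0$ back up to $s=\tau$. Noncompactness is controlled by the Gaussian weight, as in \cite{HV1,SX3}; in the present applications the $M_s$ are closed rescaled surfaces, so this issue disappears.
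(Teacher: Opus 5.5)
Your proposal is correct and matches the argument the paper relies on. The paper gives no self-contained proof: it cites Lemma 5.2 of \cite{HV1} and \cite[Appendix B]{SX3}, where density of the image is reduced by duality to injectivity of the adjoint, that is, to backward uniqueness for the adjoint linearized equation, exactly as in your Steps 1--2. The paper says the case $p>1$ is similar but does not carry it out; your pairing against $H^{-p}$ final data, followed by a short smoothing step, is a sound way to do so.
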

	We refer the readers to \cite[Appendix B]{SX3} for a proof of Lemma \ref{LmHV} for $p=1$, and the $p>1$ case is similar.

	Now we construct the small initial perturbation. Choose
	\[
	q_\ell(y)=c_{\mathrm H}\sum_{i=1}^{\ell}h_2(y_i),
	\]
	where $c_{\mathrm H}>0$ is chosen so that $\mathsf M(q_\ell)=\operatorname{diag}(I_\ell,0)$. Apply Lemma \ref{LmHV} to find initial data $\zeta_0$ for the variational equation such that, for a fixed small $\xi>0$, the pullback $\bar\zeta(T)$ satisfies
	\[
	\left\|\PiZ(\chi\bar\zeta(T))-q_\ell\right\|_{H^1}
	\leq \frac{\xi}{3}\|q_\ell\|_{H^1},
	\qquad
	\|\PiNZ(\chi\bar\zeta(T))\|_{H^1}
	\leq \frac{\xi}{3}\|q_\ell\|_{H^1}.
	\]
	After rescaling $\zeta_0$, this normalization entails no loss of generality.
	
	Next, we use the variational equation to approximate the RMCF. Suppose $\widetilde{M}^\eta_{\tau}$ is the RMCF starting from $\{x+\eta \zeta_0\bn(x)\ |\ x\in M_0\}$. Then $\widetilde{M}^\eta_T$ can be written as the graph of a function $\eta\tilde{\zeta}(T)$ over $M_T$, and $\|\eta\tilde{\zeta}(T)-\eta \bar\zeta(T)\|_{C^2}\leq C\eta^{1+\sigma_0}$ for some $\sigma_0>0$ (Proposition 3.3 of \cite{SX1}). We study the behavior of the solutions to the equation \eqref{EqODEDense} with this perturbed setting. Note that with the choice of the initial perturbation, the difference function $w$ satisfies 
	\[
	\|w(\cdot,T)\|_{H^1}=C\eta+O(\eta^{1+\sigma_0}).
	\]
	This also implies that there exists $C>0$ such that
	\[
	(1-C\xi-C\eta^{\sigma_0})\eta I_\ell
	\leq A_1(T)\leq
	(1+C\xi+C\eta^{\sigma_0})\eta I_\ell,
	\qquad
	|A_2(T)|+|A_{12}(T)|\leq C(\xi+\eta^{\sigma_0})\eta.
	\]

	\subsection{Definite time behavior of the perturbed flow}\label{SS_definite time perturbed}
	
	In this section, we suppose $K>0$ is a sufficiently large, but fixed constant; $T$ is a sufficiently large, fixed time. We choose the initial perturbation $\{x+\eta w_0\bn(x)\ |\ x\in M_0\}$ given by the previous section.
	
	\begin{lemma}\label{LmE}
		For every fixed $c_*>0$, if $K$ is sufficiently large, the projected cutoff error in \eqref{EqODEDense} satisfies
		\begin{equation}\label{EqIntegratedCutoff}
			\int_T^{T+c_*\eta^{-1}}|\Err(\tau)|\,d\tau
			\leq C e^{-K^2T/8}\eta+C\eta^{10}
		\end{equation}
		on the interval on which the graphical bootstrap holds.
	\end{lemma}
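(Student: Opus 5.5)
The plan is to bound $\Err(\tau)$ pointwise in time and then integrate over the window $[T,T+c_*\eta^{-1}]$. By Proposition \ref{PropNeutralProjection}, the projected cutoff error consists of terms supported in the outer annulus $\mathbb A_{K\sqrt\tau}$, together with the Gaussian tail outside $B_{r(\tau)}$. Testing these against the fixed quadratic Hermite functions $h_2(y_i)$ and $h_1(y_i)h_1(y_j)$ gives
\[
|\Err(\tau)|\leq C(1+K^2\tau)^2e^{-K^2\tau/4}\|w(\cdot,\tau)\|_{C^2(B_{2K\sqrt\tau})}.
\]
The polynomial factors come from $|x|\,|\nabla\chi|$ and from the polynomial growth of the test functions on the annulus. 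The part coming from the region $B_{K\sqrt\tau}\setminus B_{r(\tau)}$ is already absorbed into $\Rem$ in Lemma \ref{lem:neutral-block-ode}, so only the genuine annular cutoff terms need to be controlled here.

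Next I control $\|w\|_{C^2}$ along the whole window. I will use Lemma \ref{lem_C2 of diff bound by C0} together with Lemma \ref{LmCM313}; this is where the type-I bound enters. These give
\[
\|w(\cdot,\tau)\|_{C^2(B_{K\sqrt\tau})}\leq C_2e^{C_1(\tau-T)}\|w(\cdot,T)\|_{C^0(B_{2K\sqrt T})}\leq C e^{C_1(\tau-T)}\eta,
\]
valid as long as the graphical bootstrap $\|\tilde w\|_{C^2}\leq\dt_1$ holds. The last inequality uses the construction of Section \ref{SS:ConstructInitialPerturbation}, namely $\|w(T)\|\leq C\eta$. Inserting this bound and using $K^2\gg C_1$ yields
\[
|\Err(\tau)|\leq C\eta\,(1+K^2\tau)^2e^{-K^2\tau/4+C_1(\tau-T)}\leq C\eta\, e^{-K^2T/8}e^{-K^2(\tau-T)/8}.
\]
The last step absorbs the polynomial factor into the gap between the exponents $K^2/4$ and $K^2/8$. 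Integrating over $[T,\infty)$ then gives the term $Ce^{-K^2T/8}\eta$.

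The main obstacle is that the exponential growth $e^{C_1(\tau-T)}$ cannot be used over a window of length $c_*\eta^{-1}$ without the bootstrap failing. For large $\tau-T$, the crude bound $\eta e^{C_1(\tau-T)}$ eventually exceeds $\dt_1$. I would therefore split the window at $\tau_1=T+\frac{10}{K^2}\log(1/\eta)$, or at any time where $e^{-K^2(\tau-T)/8}\leq\eta^{9}$.

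On $[T,\tau_1]$ I use the estimate above. After increasing $K$, the bootstrap holds there, since $\eta e^{C_1(\tau_1-T)}=\eta^{1-10C_1/K^2}\ll\dt_1$.

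On $[\tau_1,T+c_*\eta^{-1}]$ I abandon the growth bound. Instead I use only the a priori bound $\|w\|_{C^2}\leq 2\dt_1$ that the graphical bootstrap supplies. This gives $|\Err|\leq C(1+K^2\tau)^2e^{-K^2\tau/4}$. Integrating produces $Ce^{-K^2\tau_1/8}\leq C\eta^{10}$, after enlarging $K$ so that $K^2T/8$ dominates the logarithmic loss and the length $c_*\eta^{-1}$ of the window. Adding the two contributions gives \eqref{EqIntegratedCutoff}.
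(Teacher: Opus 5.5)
Your proposal is correct and follows essentially the paper's two-phase argument. You bound the annular cutoff term by a Gaussian factor times $\|w\|_{C^2}$. You use the exponential growth bound from Lemmas \ref{LmCM313} and \ref{lem_C2 of diff bound by C0} on an initial segment of length $O(\log(1/\eta))$. You then use a cruder bound on the rest of the window.

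The difference is the input on the tail. The paper splits at $s_0=C_1^{-1}\log(\dt_1/(C\eta))$ and afterwards uses only the crude pseudolocality bound $\|w(T+s)\|_{C^2}\leq Ce^{(T+s)/2}$. It therefore never needs $C^2$-smallness of the difference beyond $s_0$. The price is the condition $(K^2/8-1/2)/C_1>10$, which is what produces the $\eta^{10}$ term. You instead use an $O(1)$ bound on $\|w\|_{C^2}$ supplied by the bootstrap on the whole window. That is legitimate under the lemma's ``graphical bootstrap'' hypothesis, but it asks more of that hypothesis than the paper does.

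One computation is wrong. With $\tau_1-T=\frac{10}{K^2}\log(1/\eta)$ you get $e^{-K^2\tau_1/8}=e^{-K^2T/8}\eta^{5/4}$, not something $\leq C\eta^{10}$. You cannot repair this by enlarging $K$ or $T$ in terms of $\eta$, because both are fixed before $\eta\to0$; the window length $c_*\eta^{-1}$ also plays no role, since the integrand decays exponentially. The slip is harmless for the conclusion. The tail contribution $Ce^{-K^2T/8}\eta^{5/4}\leq Ce^{-K^2T/8}\eta$ is already absorbed by the first term of \eqref{EqIntegratedCutoff}. Alternatively, split at $\tau_1-T=\frac{80}{K^2}\log(1/\eta)$. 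This still gives $\eta e^{C_1(\tau_1-T)}=\eta^{1-80C_1/K^2}\ll\dt_1$ for $K^2\gg C_1$, and it yields a genuine $\eta^{10}$.
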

	
	\begin{proof}[Proof of Lemma \ref{LmE}]
		Proposition \ref{PropNeutralProjection} gives, after absorbing the polynomial factor into the Gaussian,
		\[
		|\Err(\tau)|\leq
		C e^{-K^2\tau/8}\|w(\tau)\|_{C^2(B_{K\sqrt\tau})}.
		\]
		Let $s_0=C_1^{-1}\log(\dt_1/(C\eta))$. Lemmas \ref{LmCM313} and \ref{lem_C2 of diff bound by C0} give $\|w(T+s)\|_{C^2}\leq C\eta e^{C_1s}$ for $0\leq s\leq s_0$. If $K^2/8>C_1$, integration on this interval is bounded by $Ce^{-K^2T/8}\eta$. On the remaining interval, the crude pseudolocality estimate $\|w(T+s)\|_{C^2}\leq Ce^{(T+s)/2}$ suffices. Since $e^{-(K^2/8-1/2)s_0}$ is a power of $\eta$, choosing $K$ so that $(K^2/8-1/2)/C_1>10$ bounds the remaining integral by $C\eta^{10}$. This proves \eqref{EqIntegratedCutoff}.
	\end{proof}
	
	Later, we will show that on the time interval $\tau\in [T,T+O(\eta^{-1})],$ $a_1$ is always of order $\eta$, thus the contribution of the error $\mathcal E$ is negligible for all time under consideration. With this estimate, we can estimate $a_1$ up to time of order $\eta^{-1}$. At that final time, the difference function $w$ is of order $1/\tau$, comparable to the graphical function $u$ for the unperturbed flow. 
	
	\begin{lemma}\label{LmSolODE}
		Suppose the coefficient system \eqref{EqODEDense}, with the error bounds
		\eqref{eq:Qbound} and \eqref{EqIntegratedCutoff}, is valid on $[T,T+c_*\eta^{-1}]$. Suppose
		\[
		c_-\eta I\leq A_1(T)\leq c_+\eta I,
		\qquad
		|A_2(T)|+|A_{12}(T)|\leq\varepsilon_0\eta,
		\]
		where $0<c_-<c_+<\infty$ and $0<\varepsilon_0\ll1$. Assume also that the neutral cone condition
		\begin{equation}\label{EqNeutralCone}
			\|\Pi_{\ne0}(\chi w)(\tau)\|_{H^1}
			\leq \tau^{-\vartheta}\|\Pi_0(\chi w)(\tau)\|_{H^1},
			\qquad \vartheta>\frac45,    
		\end{equation}
		holds throughout this interval. Let $\gamma>0$ as in Lemma \ref{lem:neutral-block-ode}, and let $\sigma>0$ be as in \eqref{eq:Qbound}. After decreasing $c_*$ and $\varepsilon_0$, every eigenvalue $a_j$ of $A_1$ stays positive and, for $0\leq s\leq c_*\eta^{-1}$,
		\begin{equation}\label{EqSolaj}
			a_j(T+s)=
			\frac{a_j(T)}{1+\gamma a_j(T)s}
			\bigl(1+O(\varepsilon_0+T^{-\sigma})+o_\eta(1)\bigr),
		\end{equation}
		and
		\begin{equation}\label{EqA2<A1}
			|A_2(T+s)|+|A_{12}(T+s)|
			\leq C\varepsilon_0\,|A_1(T+s)|.
		\end{equation}
	\end{lemma}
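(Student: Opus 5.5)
We will run a bootstrap (continuity) argument on $[T,T+c_*\eta^{-1}]$. We compare the block system \eqref{EqODEDense} with the decoupled scalar Riccati equations $\dot a=-\gamma a^2$, whose solution is $a(T)/(1+\gamma a(T)s)$. Set $\mu(s)=\eta/(1+\gamma c_+\eta s)$. On $0\le s\le c_*\eta^{-1}$, $\mu$ is comparable to $\eta$ up to factors depending only on $c_*$. The bootstrap hypothesis on a maximal subinterval $[T,T+s_1]$ is
\[
\tfrac12 c_-\mu(s)\,I\le A_1(T+s)\le 2c_+\eta\, I,\qquad |A_2|+|A_{12}|\le 2C_0\varepsilon_0\eta ,
\]
with $C_0$ fixed below. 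We will show each inequality holds with strict improvement, so that $s_1=c_*\eta^{-1}$.

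\emph{Off-diagonal blocks.} Under the bootstrap, the $A_{12}$ equation is linear in $A_{12}$:
\[
\dot A_{12}=-\tfrac1\tau A_{12}-\gamma(A_1A_{12}+A_{12}A_2)+\Rem_{12}+\Err_{12}.
\]
The terms $-\tfrac1\tau A_{12}$ and $-\gamma A_1A_{12}$ are dissipative, since $A_1>0$. Taking the Frobenius inner product with $A_{12}$ gives
\[
\tfrac{d}{ds}|A_{12}|\le C|A_2||A_{12}|+|\Rem_{12}|+|\Err_{12}|.
\]
By \eqref{eq:Qbound}, $|\Rem|\le C(T^{-\sigma}\eta^2+T^{-1-\sigma}\eta)$. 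Integrated over a time $c_*\eta^{-1}$, this contributes $C(c_*T^{-\sigma}\eta+c_*T^{-1-\sigma})$. The cutoff error contributes $Ce^{-K^2T/8}\eta+C\eta^{10}$ by Lemma \ref{LmE}. The coupling term $|A_2||A_{12}|$ integrates to at most $Cc_*\varepsilon_0\cdot\varepsilon_0\eta$. For $A_2$ the quadratic forcing $-\gamma A_{12}^TA_{12}$ has size $\varepsilon_0^2\eta^2$, which integrates to $c_*\varepsilon_0^2\eta$. Hence, after shrinking $c_*,\varepsilon_0$ and enlarging $T$, both $|A_2|$ and $|A_{12}|$ stay below $C_0\varepsilon_0\eta$. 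There is a subtlety here: the term $c_*T^{-1-\sigma}$ must be compared with $\varepsilon_0\eta$. We intend to handle it by restricting to $\eta\ge T^{-1}$-type regimes, or by using the relative form $\tau^{-1-\sigma}|A|$ together with $\tau\ge T$ and $\int_T^{\infty}\tau^{-1-\sigma}\,d\tau\le CT^{-\sigma}$. The latter gives a multiplicative factor $e^{CT^{-\sigma}}$ rather than an additive term, and this is the route we will take.

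\emph{Eigenvalues of $A_1$.} We use the variational formula for the eigenvalues of a symmetric $C^1$ family (they are Lipschitz, with a.e. derivative $\dot a_j=\langle \dot A_1e_j,e_j\rangle$). This gives
\[
\dot a_j=-\gamma a_j^2-\gamma\langle A_{12}A_{12}^Te_j,e_j\rangle+O\bigl(T^{-\sigma}|A|^2+\tau^{-1-\sigma}|A|\bigr)+\Err .
\]
Since $|A|\le C|A_1|+C\varepsilon_0\eta$ and, under the bootstrap, $|A_1|\le C a_j$ (here $a_j\ge \tfrac12 c_-\mu\gtrsim_{c_*}\eta$ is used), the perturbation is $a_j^2\,O(\varepsilon_0^2+T^{-\sigma})+a_j\,O(\tau^{-1-\sigma})+\Err$. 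Passing to $b_j=1/a_j$ gives
\[
\dot b_j=\gamma+O(\varepsilon_0+T^{-\sigma})+O(\tau^{-1-\sigma})b_j+\Err/a_j^2 .
\]
We integrate over $s\le c_*\eta^{-1}$. The $O(\varepsilon_0+T^{-\sigma})$ term is a relative error in $\gamma s$. The $\tau^{-1-\sigma}$ term contributes a factor $1+O(T^{-\sigma})$ by Gronwall. The error term is bounded by $\eta^{-2}(e^{-K^2T/8}\eta+\eta^{10})$, which is $o_\eta(1)\,b_j(T)$ once $K,T$ are large relative to the choice of $\eta$ (we fix $T$, then let $\eta\to0$ with $e^{-K^2T/8}\ll\eta$ arranged by taking $K$ large). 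This yields \eqref{EqSolaj}. It also shows $a_j$ stays between $\tfrac34 c_-\mu$ and $\tfrac32 c_+\eta$, which improves the bootstrap. The bound $|A_1|\ge c\,\eta$ on this interval then turns the previous estimate into \eqref{EqA2<A1}.

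\emph{Main obstacle.} The delicate point is the error $\Err$. Lemma \ref{LmE} only controls it in integrated form, and after division by $a_j^2\sim\eta^2$ it must still be small. We will therefore estimate $\int \Err/a_j^2$ using $a_j\gtrsim\eta$ uniformly on the interval, and choose $K$ so that $e^{-K^2T/8}\le\eta^{2}$ in the relevant range, or equivalently impose $\eta\ge e^{-K^2T/16}$. This requires checking that the order of choosing $K$, $T$, $\eta$ is consistent with the construction in Section \ref{SS:ConstructInitialPerturbation}.
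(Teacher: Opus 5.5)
Your bootstrap is essentially the paper's argument. You use the same lower eigenvalue barrier for $A_1$; your constant upper barrier $2c_+\eta$ is harmless on $s\le c_*\eta^{-1}$. You integrate the scalar Riccati inequality for each ordered eigenvalue, and passing to $b_j=1/a_j$ is just how that integration is done. The dissipative estimate for $A_2,A_{12}$ is also the paper's, and your absolute bound $C_0\varepsilon_0\eta$ is equivalent to \eqref{EqA2<A1}, since $a_j\gtrsim_{c_*}\eta$ throughout.

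The one thing to correct is your ``main obstacle.'' It is not an obstacle, and the remedy you propose would break the argument. Dividing by $a_j^2\sim\eta^2$ costs $\eta^{-2}$. But you compare against $b_j(T)\geq (c_+\eta)^{-1}$, so
\[
\frac{1}{b_j(T)}\int_0^{s}\frac{|\Err|}{a_j^2}\,ds'
\leq C\eta\cdot\eta^{-2}\bigl(e^{-K^2T/8}\eta+\eta^{10}\bigr)
= C\bigl(e^{-K^2T/8}+\eta^{9}\bigr).
\]
This relative error is uniform in $\eta$. The first part is absorbed into $O(\varepsilon_0)$ by choosing $K,T$ with $e^{-K^2T/8}\ll\varepsilon_0$, which is exactly the paper's choice. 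The second part is the $o_\eta(1)$ in \eqref{EqSolaj}. So no lower bound $\eta\geq e^{-K^2T/16}$ is needed; imposing one would prove a weaker statement than the lemma. Choosing $K$ after $\eta$ would be circular: $T$ (through graphicality on $B_{2K\sqrt T}$), the cutoff, and the initial datum $\zeta_0$ of Section \ref{SS:ConstructInitialPerturbation} are all fixed depending on $K$ before $\eta\to0$.

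A smaller point concerns the off-diagonal blocks. The $\tau^{-1-\sigma}|A|$ part of \eqref{eq:Qbound} is an inhomogeneous forcing there, because $|A|\geq|A_1|\sim\eta$, not a multiplicative Gronwall factor. It contributes an additive $C\eta\int_T^\infty\tau^{-1-\sigma}\,d\tau\leq C\eta T^{-\sigma}$, which is $\ll\varepsilon_0\eta$ once $T$ is large. So your conclusion there stands, but for this reason rather than the one you give.
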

	
	\begin{proof}
		
		Let $s_*$ be the first elapsed time at which either an eigenvalue of $A_1$ leaves the interval
		\[
		\left[\frac{c_-\eta}{2(1+\gamma c_+\eta s)},
		\frac{2c_+\eta}{1+\gamma c_-\eta s}\right]
		\]
		or the right-hand side of \eqref{EqA2<A1}, with $C$ replaced by $2C$, is attained. At almost every differentiability time in $[0,s_*]$, the ordered eigenvalues satisfy
		\begin{equation}\label{EqEigenDense}
			\dot a_j=-\gamma a_j^2
			+O\bigl((\varepsilon_0+T^{-\sigma})a_j^2\bigr)
			+O\bigl((T+s)^{-1-\sigma}a_j\bigr)+e_j(T+s).
		\end{equation}
		The same inequalities hold in the one-sided sense at multiple eigenvalues, and \eqref{EqIntegratedCutoff} bounds the integral of $|e_j|$. Integrating the scalar Riccati inequalities, using $\int_T^\infty\tau^{-1-\sigma}d\tau=O(T^{-\sigma})$, and first choosing $K,T$ so that $e^{-K^2T/8}\ll\varepsilon_0$ gives \eqref{EqSolaj} uniformly on $[0,s_*]$, with strict improvement of the eigenvalue barriers.
		
		For $A_2$ and $A_{12}$, variation of constants in \eqref{EqODEDense}, together with \eqref{eq:Qbound}, \eqref{EqNeutralCone}, and \eqref{EqSolaj}, yields
		\[
		|A_2(T+s)|+|A_{12}(T+s)|
		\leq \frac32 C\varepsilon_0\|A_1(T+s)\|.
		\]
		The integrated cutoff contribution is absorbed by the same choice of $K,T$, while $\eta^{10}=o_\eta(1)\eta$. Thus neither exit condition can occur at $s_*<c_*\eta^{-1}$. Hence $s_*=c_*\eta^{-1}$, proving \eqref{EqSolaj} and \eqref{EqA2<A1}.
	\end{proof}
	
	\subsection{The further perturbation argument}\label{SS:PerturbCondormal}
	
	We perform a family of spacetime translations and dilations of the perturbed initial manifold $\widetilde M_0$. After a fixed nonsingular linear reparametrization, the positive spectral coefficients of the difference graphs at time $T$ form the family
	\[
	\mathcal W=
	\left\{\eta\left(q_\ell+\sum_{j=1}^ks_jy_j+s_0\right):
	(s_0,s_1,\ldots,s_k)\in[-\xi,\xi]^{k+1}\right\},
	\]
	up to the errors estimated in the preceding subsection. For a parameter $\mathbf s$, let $w^{\mathbf s}(\tau)$ be the corresponding difference graph and use the cutoff $\chi_{K\sqrt\tau}$. Smooth dependence of the mean curvature flow gives continuous dependence of $\chi_{K\sqrt\tau}w^{\mathbf s}(\tau)$ on $\mathbf s$ on the common graphical bootstrap interval.
	
	We now make the selection step precise. Let $D=[-\xi,\xi]^{k+1}$ and $S=T+c_*\eta^{-1}$. Consider the first time at which a member of the family reaches the boundary of the neutral cone \eqref{EqNeutralCone}. The one-step cone estimates in Theorem \ref{ThmCone}, together with \eqref{EqIntegratedCutoff}, make such a crossing strict and outward when $T,K$ are large and $\eps_0\ll\xi$. On $\partial D$, the normalized coefficient vector of $1,y_1,\ldots,y_k$ is a small perturbation of $(s_0,s_1,\ldots,s_k)$; these are precisely the coefficients generated by the chosen dilation and translations. If every parameter exited before $S$, the first-exit coefficient vector would consequently define a continuous retraction of $D$ onto $\partial D$ whose boundary restriction has degree one. This is impossible. Hence some $\mathbf s_*\in D$ stays in the neutral cone up to time $S$. The contracting alternative in Theorem \ref{ThmCone} controls its negative modes, so \eqref{EqNeutralCone} holds throughout $[T,S]$.
	
	Lemma \ref{LmSolODE} applies to $w^{\mathbf s_*}$. At time $S$,
	\[
	\frac{c}{S}I_\ell\leq A_1(S)\leq\frac{C}{S}I_\ell,
	\qquad |A_2(S)|+|A_{12}(S)|\leq\frac{C\varepsilon_0}{S}.
	\]
	The unperturbed matrix on the remaining block is $(\gamma S)^{-1}I_{k-\ell}+O(S^{-1-\vartheta})$. Thus, after decreasing $\varepsilon_0$, the full neutral matrix of the selected flow is positive definite and all its eigenvalues lie between $c/S$ and $C/S$.
	
	Starting at $S$, the centering argument of Section \ref{S_stability} applies with this uniformly positive definite coefficient matrix (the degree proof of Proposition \ref{PropCentering} is unchanged, since the
	axial translation block is then invertible). Pseudolocality and the Ornstein--Uhlenbeck estimate continue the graphical bounds on $B_{\tau^\kappa}$. The matrix estimate \eqref{EqM} and the spectral bounds at time $S$ give, by the standard eigenvalue comparison, $\lambda_{\min}(\tau)\geq c/\tau$ for $\tau\geq S$. Lemma \ref{LmNeutralPersistence} therefore shows that the limiting projection is $I_k$. Hence the selected nearby flow has a nondegenerate cylindrical singularity. This completes the proof of the denseness theorem.
	
	\begin{proof}[Proof of Theorem \ref{ThmGenericity}]
		Combining all the ingredients of this section proves Theorem \ref{ThmGenericity}.
	\end{proof}

	\section{Applications}\label{SApplication}
	In this section, we give some applications of our main result to level set flows, rotational graphs, etc. 
	\subsection{Regularity of level set flow}
	
	In this section, we prove Theorem \ref{thm:main thm LSF}. We first prove the following lemmas:
	
	\begin{lemma}\label{lem:2times}
		If $\{\bM_t\}$ is an MCF with a nondegenerate singularity that is modeled by $\cC_{n,k}$ with $k\geq 1$, then $\{\bM_{t}\}$ has at least two distinct singular times.
	\end{lemma}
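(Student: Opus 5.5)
The plan is to exploit the fact that a nondegenerate cylindrical singularity is isolated in spacetime, so the first singular time cannot also be the last. Let $(0,0)$ be the nondegenerate singularity modeled by $\cC_{n,k}$. By Theorem \ref{ThmNF-sqrtt}(1) with $\mathcal I=\{1,\ldots,k\}$, the rescaled flow near the singularity looks like the graph of
\[
\varrho\Bigl(\sqrt{1+\tfrac{|y|^2-2k}{2\tau}}-1\Bigr)
\]
over $\cC_{n,k}\cap B_{K\sqrt\tau}$.

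\textbf{Step 1: the time-$0$ slice near the origin.} Undoing the rescaling, at time $t=-e^{-\tau}$ and at axial distance $|y|\sim K\sqrt{\tau}\,e^{-\tau/2}$, the sphere factor has radius comparable to $\sqrt{|t|}\,\sqrt{1+|y|^2/(2\tau)}$. So the neck pinches only at $y=0$. In the unrescaled picture, the profile at time $0$ is a punctured neighborhood of the singular point which remains a smooth tube of positive radius, of order $|y|/\sqrt{\log(1/|y|)}$. I will invoke the isolatedness of nondegenerate singularities from \cite{SunXue25_GenericI} (the backward neighborhood statement) to conclude that $(0,0)$ is an isolated singular point in a backward parabolic neighborhood.

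\textbf{Step 2: the punctured neighborhood is a closed-up tube.} For $x=(\theta,y)$ with $0<|y|<r_0$, the flow stays smooth up to and slightly past $t=0$. Hence $\bM_0\setminus\{0\}$, near the origin, contains a nonempty smooth piece: two-sided when $k=1$, and an annular region $S^{n-k}\times(B^k_{r_0}\setminus\{0\})$ in general. This piece has positive distance from the axis except at $0$.

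\textbf{Step 3: the extra singular time.} The flow is of closed hypersurfaces, so it has finite extinction time $T_{\rm ext}$. The surviving tube around the pinch point is nonempty at time $0$, so the flow does not become extinct at $t=0$. The level set flow therefore continues past time $0$; this needs the avoidance principle and the fact that the tube is a piece of a closed surface. It must eventually become extinct, and extinction of a compact flow happens at a singular time. So there is a singular time $T_{\rm ext}>0$, distinct from $0$. If $0$ is not the first singular time, we simply have a further singular time before it, which already gives two singular times.

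\textbf{Main obstacle.} The hardest step is Step 2: showing rigorously that points near the singularity at time $0$ are nonsingular, i.e.\ that the region just outside the backward parabolic neighborhood is regular. The $C^1$ normal form only gives control on $|y|\leq K\sqrt{\tau}\,e^{-\tau/2}$ at each time. I would combine this with pseudolocality (Theorem \ref{thm:pseudo-locality}) applied at centers $p$ on the axis with $|p|\sim K\sqrt\tau$. There the graph is $C^1$-close to a cylinder of definite radius $\varrho\sqrt{1+K^2/2}$, which is strictly bigger than the shrinking scale. This shows the flow stays smooth at those axial points through $t=0$, giving the positive-radius tube.
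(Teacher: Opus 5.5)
Your proposal is correct and follows essentially the same route as the paper. The paper cites \cite[Theorem 1.6]{SunXue25_GenericI} for the fact that $\bM_t\cap(B_\delta(0)\setminus\{0\})$ is smooth for all $t\in[-1,0]$, including the time-$0$ slice; hence the flow does not vanish at $t=0$, and its later extinction gives a second singular time. The pseudolocality argument you flag as the main obstacle is how that isolatedness result is proved in Part I (compare Lemma \ref{lem:plcyl}), so here the citation alone suffices.
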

	
	\begin{proof}
		Without loss of generality, assume $(0,0)$ is the nondegenerate singularity of $\{\bM_{t}\}$. We show that $\bM_{t}$ is nonvanishing after time $0$. Recall that in \cite[Theorem 1.6]{SunXue25_GenericI}, we prove that there exists $\delta>0$ such that $\bM_{t}\cap (B_{\delta}(0)\backslash\{0\})$ is smooth for $t\in[-1,0]$ for some $\delta>0$. This shows that $\bM_{t}$ is nonvanishing after time $0$, and hence $\bM_{t}$ has at least two distinct singular times.
	\end{proof}
	
	In order to apply our perturbation scheme, we need to require the flow to be type-I. This is true when the flow has the singular set to be a space $C^1$-submanifold.
	
	\begin{lemma}\label{lem_C2_type-I}
		If $\{\bM_t\}$ is a MCF with the following significance:
		\begin{enumerate}
			\item There is exactly one singular time $T$, where the flow becomes extinct;
			\item The singular set $\cS$ is a $k$-dimensional closed, connected, embedded $C^1$ submanifold of singularities where the tangent flow is $\cC_{n,k}$ at each point.
		\end{enumerate}
		Then $\{\bM_t\}$ is type-I.
	\end{lemma}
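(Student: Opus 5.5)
We argue by contradiction against the type-I bound, using a blow-up at points of maximal curvature and a rigidity statement for the resulting limit. Suppose $\{\bM_t\}$ is not type-I near some singular point. Then there are spacetime points $(x_j,t_j)$ with $t_j\nearrow T$ and
\[
|A|^2(x_j,t_j)(T-t_j)\to\infty .
\]
By a standard point-picking argument (Hamilton's, as used for type-II blow-ups), we choose $(x_j,t_j)$ to be almost maximal for $|A|^2(T-t)$ on $[0,t_j]$. We then rescale parabolically by $Q_j=|A|(x_j,t_j)$ around $(x_j,t_j)$. Because the singular set $\cS$ is compact, and each point of it has tangent flow $\cC_{n,k}$ of multiplicity one, the flow is mean convex near the singular time in a neighborhood of $\cS$; away from $\cS$ it is smooth up to $T$. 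Hence the blow-up points lie near $\cS$, and the non-collapsing and convexity estimates (White, Haslhofer--Kleiner) apply there. The rescaled flows therefore subconverge to an eternal, noncollapsed, weakly convex ancient flow $\{\bM^\infty_s\}_{s\in\R}$ with $|A|\le 1$ and $|A|(0,0)=1$. By the Hamilton/Huisken--Sinestrari rigidity for maximal-curvature eternal flows, this limit is a translating soliton, which is a bowl soliton $\times\R^{k-1}$ by Brendle--Choi, or a Hoffman--Ilmanen--Martin--White type translator in higher codimension of the axis.

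The contradiction comes from the hypotheses on the singular set. By assumption (1), every singular point occurs at the single time $T$. We intend to show that the existence of a translator blow-up forces singular points at different times, which assumption (1) excludes. A translating blow-up moving along an axial direction means the flow near $x_j$ pinches at a point $\bar x_j$ at a time $T_j<T$ strictly, or else an interior cap moves along $\cS$ at speed $\sim Q_j$. In the first case we contradict uniqueness of the singular time directly. In the second case we use that $\cS$ is a closed embedded $C^1$ $k$-manifold with tangent flow $\cC_{n,k}$ everywhere. Colding--Minicozzi's regularity theory \cite{CM3,CM4} (with the arrival time $g$ being $C^{1,1}$ and $\nabla g=0$ exactly on $\cS$) shows that in a tubular neighborhood of $\cS$ the level sets of $g$ near time $T$ are uniformly close, at scale $\sqrt{T-t}$, to cylinders with axis $T_p\cS$. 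This closeness is uniform because the $C^1$ manifold is compact and the convergence to the cylinder is uniform along $\cS$: the cylindrical tangent flow is unique, with a uniform rate on compact sets, by Colding--Minicozzi. Uniform cylindrical closeness at scale $\sqrt{T-t}$ around every point of the tube gives $|A|^2\le C(T-t)^{-1}$ there, which contradicts the blow-up sequence. Combined with smoothness away from the tube, this yields the type-I bound.

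The main obstacle is this uniform-in-space closeness to the cylinder along $\cS$. Pointwise uniqueness of tangent flows does not give uniformity, because the neck scale at a point $p$ could degenerate as $p$ approaches another point. I would first try a compactness argument on $\cS$. Suppose the rescaled flows at $(p_j,T)$ with scales $r_j\to0$ fail to be $\epsilon$-close to $\cC_{n,k}$. Take a limit using the entropy bound. The limit is a tangent-like flow at $p_\infty\in\cS$ whose singular set contains the blow-up of $\cS$, which is a $k$-plane by the $C^1$ condition, all at a single time by (1). A self-similar or ancient mean convex flow whose singular set at time $0$ contains a $k$-plane, with no earlier singularities, must split off $\R^k$ and therefore be the cylinder, which gives the contradiction. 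Making the ``no earlier singular times'' property pass to the limit is the delicate point. For this I would use the $C^{1,1}$ arrival time bound of \cite{CM4}, which yields the Lipschitz-in-scale control needed.
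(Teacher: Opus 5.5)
Your proposal has a genuine gap, and it sits at the step that carries the whole proof: the \emph{uniform} closeness of the flow to $\cC_{n,k}$ at scale $\sqrt{T-t}$ in a neighbourhood of $\cS$. Once you have that, $|A|^2\le C(T-t)^{-1}$ near $\cS$ follows at once, as your own ``second case'' shows. So the blow-up at almost-maximal points, the translator limit and its classification are superfluous.

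They are also not sound as written. Mean convexity of the flow in a neighbourhood of $\cS$ is needed for White, Haslhofer--Kleiner and Huisken--Sinestrari. It does not follow from the pointwise fact that each tangent flow is a cylinder; it already presupposes the uniform cylindrical closeness you only address later. The lemma also does not assume mean convexity, so the arrival-time theory of \cite{CM4} is not available in general. Finally, the dichotomy ``pinch at an earlier time $T_j<T$, or a cap sliding along $\cS$'' is asserted without argument.

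You correctly flag the uniformity as the main obstacle, but the compactness argument you sketch does not close it. The limit of rescalings about $(p_j,T)$ at scales $r_j\to0$, with $p_j\to p_\infty$, is a priori only an ancient limit flow, not a tangent flow at $p_\infty$. The property ``no singularities before time $0$'' does not pass to the limit. A singular point of the limit at a negative time can only come from $\sqrt{T-t}\,|A|$ being unbounded along the sequence. Showing the limit has no earlier singularities is therefore essentially the type-I bound itself, which makes the argument circular. The claim that an ancient flow whose time-$0$ singular set contains a $k$-plane must split off $\R^k$ is also unjustified.

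The paper does not prove uniformity by hand. It imports it from \cite[Corollary 3.2]{CM3}, a consequence of Colding--Minicozzi's {\L}ojasiewicz-type uniqueness of cylindrical tangent flows. This gives, for each $y\in\cS$, constants $r_y,t_y>0$ such that the flow is $(j,\eta)$-cylindrical at every singular point of $PB_{r_y}(y)$ on time scale $t_y$. Hence $\sqrt{T-t}\,|A|\le C$ near $y$, and a finite cover of the compact set $\cS$ gives the type-I bound.

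If you want a self-contained compactness proof instead, use Huisken's monotonicity at $(p_j,T)$. Since $\Theta(p_j,T)=\lambda(\cC_{n,k})$ and density ratios are continuous in the centre at a fixed positive scale, your limit has Gaussian density ratio identically $\lambda(\cC_{n,k})$ at the origin, so it is self-similar. Upper semicontinuity of density along the limit plane $T_{p_\infty}\cS$ then forces translation invariance along that plane. You would still need a rigidity or classification result to identify the cross-section as the round sphere.
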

	
	\begin{remark}
		Colding-Minicozzi \cite{CM5} proved that the two assumptions in Lemma \ref{lem_C2_type-I} are equivalent to the arrival time function being $C^2$ if the flow has cylindrical singularities.
	\end{remark}
	
	\begin{proof}
		The proof is essentially contained in \cite{CM3}, and we only sketch the proof here. Let us use $PB_r(y)$ to denote the parabolic ball centered at $y$ with radius $r$. A flow $\bM_t$ is $(j, \eta)$-cylindrical at $y$ on the time-scale $s_0$ if for every positive $s<s_0$, $\frac{1}{\sqrt{s}}([B_{\eta^{-1}\sqrt{s}} (y) \cap \bM_{t-s}]-y)$ is a graph over a $\cC_{n,k}$, after possible rotations, of a function with $C^1$ norm at most $\eta$. Note that by the parabolic regularity, this definition also implies a $C^2$-graphical bound at a slightly later time.

		By Corollary 3.2 in \cite{CM3}, for any $\eta > 0$, $j>0$ and $y$ in the singular set, there exist $r_y>0$ and $t_y > 0$ so that $M_{\tau}$ is uniformly $(j,\eta)$-cylindrical on the parabolic region $\cS\cap PB_{r_y}(y)$ on the time-scale $t_y$. 
		
		Without loss of generality, assume the singular time is $0$. For each $y\in\cS$, we can find an open neighborhood $U_y$ of it so that after time $-t_y$, $M_{\tau}$ is uniformly $(j,\eta)$-cylindrical on $\cS\cap U_y$, and hence $\sqrt{-t}|A|<C$ for $-t_y<t<0$ inside $U_y$. Then the type-I condition follows from a compactness argument. 
	\end{proof}
	
	\begin{proof}[Proof of Theorem \ref{thm:main thm LSF}]
		Let us define $\cU_1$ to be the set of mean convex hypersurfaces that have a single spherical singularity, and the spherical singularity is the most generic one in the sense of \cite{SX3}. We define $\cU_2$ to be the set consisting of mean convex closed hypersurfaces such that the level set flows starting from these hypersurfaces have a nondegenerate cylindrical singularity. 
		
		The openness of $\cU_1$ is proved in \cite{SX3}, and the openness of $\cU_2$ is proved by the stability of the nondegenerate cylindrical singularity Theorem \ref{ThmStability}. Hence $\cU=\cU_1\cup\cU_2$ is open. Next, we prove denseness. Given a mean convex hypersurface $\Sigma$, the first-time singularity is either spherical or cylindrical. If the first-time singularity is spherical, from \cite{SX3}, $\Sigma\in\overline{\cU_1}$. If the first-time singularity is cylindrical, then Lemma \ref{lem_C2_type-I} shows that the flow is type-I, and we can apply the genericity Theorem \ref{ThmGenericity} to show that $\Sigma\in\overline{\cU_2}$. Then we proved the denseness of $\cU$.
		
		Finally, for hypersurfaces in $\cU_1$, the level set flow starting from them has $C^2$ but not $C^3$ regularity argued as \cite{SX3}; for hypersurfaces in $\cU_2$, the level set flow starting from them has at least two distinct singular time by Lemma \ref{lem:2times}, hence by \cite{CM5} the level set flow has $C^{1,1}$ but not $C^2$ regularity. Then the proof is concluded.
	\end{proof}

	\subsection{Pinch a thin ring}\label{SMarriageRing}
	
	In this section, we prove the global Theorem \ref{ThmMarriageRing} on pinching a thin ring.

	Without loss of generality, we assume $\gamma$ is a curve passing through the origin, and we will pinch the MCF at the origin. Let us summarize some known facts about MCF whose singular set is a smooth curve. By the stratification of the singular set (\cite{CheegerHaslhoferNaber13}), the tangent flow at any singular point splits along a line, hence must be a cylinder in $\R^3$. By Colding-Minicozzi's analysis of cylinder tangent flow, \cite{CM3}, as the time approaches the singular time, the flow becomes a ``tube'' around the singular set. Also, by Lemma \ref{lem_C2_type-I}, the flow has the type-I curvature condition.
	
	\begin{proof}
		Let us construct the perturbation at a large time $T$. We consider the RMCF $M_{p,\tau}=e^{\tau/2}(\mathbf M_{-e^{-\tau}}-p)$ blow up at each point $p\in \gamma$. 
		
		We first find a large time $T$ that is uniform for all $p$, such that the at time $T$, $M_{p,T}$ can be written as the graph over $\Sigma^1$ of a function $u_p:\ \Sigma^1\cap B_{\mathbf r(T)}\to \R$ with $\|u_p\|_{C^2}\leq \eta. $ The uniformity of $T$ is given by \cite{CM2} and \cite[Corollary 3.2]{CM3}. 
		
		We next construct a function $f:\bM_{-e^{-T}}\to\R$ defined as follows: 
		\begin{itemize}
			\item Inner region: Inside $B_{e^{-T/2}(\mathbf{r}(T))}$, let $f$ be the transplantation of $\eps_0\sqrt{1+\frac{y^2-2}{T}}$ over $\bM_{-1}\cap B_{\mathbf{r}(T)}$. 
			\item Outer region: Outside $B_{e^{-T/2}(\mathbf{r}(T)+1)}$, let $f$ be the constant $u_O=e^{-T/2}(\mathbf{r}(T)^2-2)$.
			\item Transition region: In the annulus region $B_{e^{-T/2}(\mathbf{r}(T)+1)}\backslash B_{e^{-T/2}(\mathbf{r}(T))}$, let $f$ be the smooth interpolation.
		\end{itemize}

		Let $\eps_1>0$ be a sufficiently small number much smaller than $\eps_0$. We next apply Lemma \ref{LmHV} of Herrero-Vel\'azquez to the linearized MCF equation: $\partial_t u=\Delta_{\bM_{t}}u+|A|^2u$ on the time interval $t\in [-1,-e^{-T}]$ to find a function $u(-1)$ such that $\|u(-e^{-T})-f\|_{C^2}\leq \eta$. 
		
		With $u(-1)$, for any small $\eta>0$ we introduce the perturbed MCF $(\widetilde \bM^\eta_t)$ with initial condition $\widetilde \bM^\eta_{-1}:=\mathrm{Graph}\{x+\eta u(-1,x)\mathbf n(x)\ | \ x\in \bM_{-1}\}. $
		
		We next prove that for sufficiently small $\eta>0$, the perturbed MCF $\widetilde\bM^\eta_{-1}$ verifies the statement of the theorem. First, we cite the following Pseudolocality result from \cite[Lemma 7.1]{SunXue25_GenericI}. 
		\begin{lemma}[Pseudolocality of cylindrical MCF over $\cC_{n,k}$ with a slightly larger radius]\label{lem:plcyl}
			For any $\eps_0>0$, and a vector $V\in \R^k$ with $0<|V|<\eps_0$, there exist $T_0>0$, $R_1>0$ and $\eps_1>0$ with the following significance. Suppose $T>T_0$, $\bM_t$ is a mean curvature flow such that, $\bM_0$ is the graph of a function $u$ over the cylinder $\cC_{n,k}$ inside a ball of radius $\eps_0\sqrt{T}$, with $\|u(\theta,y)-C|T^{-1/2}y-V|^2\|_{C^1}<\eps_1$, where $C>0$ is some constant. Then for $t\in[0,1]$, $\bM_t\cap B_{R_1}$ is a smooth piece of the mean curvature flow.
		\end{lemma}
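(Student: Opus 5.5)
The plan is to prove Lemma \ref{lem:plcyl} by a contradiction-and-compactness argument built on the pseudolocality Theorem \ref{thm:pseudo-locality}. The point is that the model profile $C|T^{-1/2}y-V|^2$ is tiny on a large ball around the origin as soon as $T$ is large. On $|y|\leq\rho\sqrt T$ with $\rho\ll|V|$ we have $|T^{-1/2}y-V|^2\leq(|V|+\rho)^2\leq 4\eps_0^2$. Its $y$-gradient is $2CT^{-1/2}(T^{-1/2}y-V)=O(\eps_0T^{-1/2})$. So on the ball $B_{\rho\sqrt T}$, the hypersurface $\bM_0$ is a $C^1$-graph over $\cC_{n,k}$ whose height is $O(\eps_0^2)+\eps_1$ and whose slope is $O(\eps_0 T^{-1/2})+\eps_1$. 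Up to the small constant shift $C|V|^2$, this is a nearly round cylinder of radius $\varrho$ on a ball of radius $\rho\sqrt T\to\infty$.

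First, I will absorb the shift $C|V|^2$. I will write the graph over the cylinder of radius $\varrho+C|V|^2$, or equivalently rescale $\bM_0$ by the factor $\varrho/(\varrho+C|V|^2)$. This factor is close to $1$ when $\eps_0$ is small. After rescaling, the function is $C^1$-small, bounded by some $\eta(\eps_0,\eps_1,T)$ with $\eta\to0$ as $\eps_0,\eps_1\to0$ and $T\to\infty$, on a ball of radius $\rho'\sqrt T$.

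Next, I will apply the standard (non-rescaled) form of pseudolocality, i.e.\ Theorem \ref{thm:pseudo-locality} read in MCF time, or Ecker--Huisken interior estimates combined with Brakke regularity. The hypothesis is that $\bM_0$ is $\eta$-close in $C^1$ to a round cylinder of radius $\approx\varrho$ on a ball of radius $\gg1$. The conclusion is that the flow stays a smooth graph over the evolving shrinking cylinder on a slightly smaller ball. It does so until a time just before the comparison cylinder collapses; for radius $\varrho=\sqrt{2(n-k)}$ the MCF cylinder has extinction time $1$. This is exactly where the hypothesis $0<|V|$ enters. Near the origin the initial height is at least about $C|V|^2-\eps_1>0$ once $\eps_1\ll C|V|^2$, so the comparison cylinder has radius strictly larger than $\varrho$ and its extinction time is $1+c|V|^2>1$. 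Choosing $\eps_1$ small relative to $|V|^2$, a barrier argument shows the flow on $B_{R_1}$ stays outside a cylinder of radius $\sqrt{\varrho^2+c|V|^2-2(n-k)t}>0$ for $t\in[0,1]$. The barriers are inner and outer cylinders, made compact by capping them with large spheres or by a localized avoidance principle. Hence nothing pinches in $B_{R_1}$ up to time $1$.

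Finally, regularity: curvature bounds on $B_{R_1}\times[0,1]$ come from pseudolocality up to time $1-\delta$. Past that time they come from the fact that the flow is trapped between two cylinders of comparable positive radius and is graphical with small gradient. Interior estimates then give smoothness. I expect the main obstacle to be making pseudolocality reach all the way to $t=1$ rather than a fixed fraction of it. I would handle this by the contradiction argument. Take $T_j\to\infty$ and $\eps_{1,j}\to0$. The flows then converge, by Brakke compactness and White's regularity theorem, to the exact shrinking cylinder of radius $\sqrt{\varrho^2+C|V|^2}$ (the graph of $C|V|^2$). That limit is smooth on $[0,1]$ because $|V|>0$, so the multiplicity-one convergence is smooth, contradicting the assumed singularity in $B_{R_1}\times[0,1]$.
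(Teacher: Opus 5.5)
The paper does not prove this lemma here; it quotes it from \cite[Lemma 7.1]{SunXue25_GenericI}. On its own merits, your conclusion is right, and your second paragraph already contains a complete proof. The route you end up committing to (barriers, then compactness) is a detour aimed at an obstacle that is not there.

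Here is the direct argument. Set $\varrho'=\varrho+C|V|^2$ and $\lambda=\varrho/\varrho'<1$.

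On $|y|\le\rho\sqrt T$, the surface $\lambda\bM_0$ is a graph over $\cC_{n,k}$ of $\lambda(u-C|V|^2)$, in rescaled coordinates. Its $C^1$ norm is at most $\eta\lesssim\rho(\rho+|V|)+\eps_1+T^{-1/2}$.

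Regard $\lambda\bM_0$ as the $\tau=0$ slice of an RMCF. Then original time $t\in[0,1]$ corresponds to $\tau=-\log(1-\lambda^2t)\le\log\frac1{1-\lambda^2}$, which is finite exactly because $|V|>0$.

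Choose $\rho$, then $\eps_1$, small depending on $|V|$, and $T_0$ large, so that $\bar\delta(\eta)>\log\frac1{1-\lambda^2}$. Theorem \ref{thm:pseudo-locality} (with its $\rho$ renamed $\rho_0$) then gives smooth graphical control on $B_{\rho_0\rho\sqrt T}\supset B_{R_1}$ for all $t\in[0,1]$. Short-time interior estimates cover the initial layer $\tau\in[0,\sigma]$.

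So, after your own rescaling, $t=1$ is the fixed fraction $\lambda^2$ of the comparison cylinder's extinction time. Nothing special happens on $[1-\delta,1]$. Note also that the smallness must come from shrinking the ball to radius $\rho\sqrt T$, not from letting $\eps_0\to0$. Both $\eps_0$ and $V$ are fixed in the statement.

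Several of your extra steps fail as written.
\begin{itemize}
\item Round spheres inside the solid cylinder of radius $\varrho'$ vanish by time $\varrho'^2/(2n)<1$ when $|V|$ is small, so they cannot be inner barriers up to $t=1$.
\item An inner cylinder can only be capped with caps of radius $<\varrho'$. Showing that the middle of such a capsule tracks the cylinder up to $t=1$ is itself a pseudolocality statement.
\item Large spheres give only outer barriers.
\item Being trapped between two cylinders gives no graphicality or gradient bound, so your ``past $1-\delta$'' step is unsupported.
\end{itemize}

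Your contradiction argument can be made rigorous, but the sentence asserting convergence to the exact shrinking cylinder is where all the work lies. To prove it you need an open--closed continuation in time, using:
\begin{itemize}
\item upper semicontinuity of Gaussian density together with White's local regularity, which rules out sudden vanishing;
\item an avoidance principle, to keep mass from entering from outside the region where $\bM_0$ is controlled;
\item uniqueness of the smooth cylinder flow.
\end{itemize}
That amounts to reproving Theorem \ref{thm:pseudo-locality}, so it is strictly more work than the direct argument.
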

		Theorem 1.7 (nondegenerate singularity has an isolated neighborhood) of \cite{SunXue25_GenericI} proves that the nondegenerate singularity has an isolated neighborhood $U_p$ free of singularities. Lemma \ref{lem:plcyl} applied in the same way as Theorem 1.7  of \cite{SunXue25_GenericI} shows that the transition region has no singularities. Theorem \ref{thm_existence_nondeg_with_ansatz} implies that there is a nondegenerate singularity around $p$. 
		
		We next show that there is no singularity outside $U_p$. For the perturbation at time $T$ of the RMCF, in this case, instead of writing the graph of the function $f=u_O$ over the standard cylinder $\mathcal C_{n,k}$, we write it as the graph of $f=0$ over $(1+\frac{u_O}{\sqrt{2(n-k)}})\mathcal C_{n,k}.$ Thus, we apply the above pseudolocality Lemma \ref{lem:plcyl} to show that the flow outside $U_p$ does not develop a singularity within time $1$. 
		
	\end{proof}

	\appendix
	\section{Transformation of graphs}
	
	We need the following properties of the translations and dilations of graphs in this paper. Most of the consequences are routine calculations. In the following, we fix sufficiently small $\eps_0>0$ and $\delta_0>0$. 
	
	\begin{proposition}\label{prop:AppGraph_TransDila}
		There exists $\kappa_n''\in (0, 1/2)$ such that if $\delta\in (0, \kappa_n'']$, $(\bx, \by)\in \RR^{n-k+1}\times \RR^k$, $\mbfA\in \mfk g_{n,k}^\perp$, $\lambda>0$, $R>2n$ satisfy, \[
		\|u\|_{C^2(Q_R(0, \lambda^{-1}\by))} + |\bx| + |\by| + R(|\mbfA| + |\lambda - 1|) \leq \delta \,,
		\]
		then in $Q_{R - C_n\delta}$, $e^\mbfA(\lambda\Sigma - (\bx, \by))$ is also a graph over $\cC_{n,k}$, and the graphical function $\bar u$ satisfies, 
		\begin{equation}\label{eq:AppE(v)}
			\begin{split}
				\sup_{(\theta', y')\in \cC_{n,k}\cap Q_{R-C_n\delta}} &\, \left|\bar u(\theta', y') - u(\theta', \lambda^{-1}\by +y') - \sqrt{2(n-k)}(\lambda-1) + \psi_\bx(\theta') - \psi_\mbfA(\theta', y')\right| \\
				& \leq C_n \left(\|u\|_{C^2(Q_R(0, \by))} + R|\lambda-1| + |\bx|+ R|\mbfA| \right)\cdot (R|\lambda-1| + |\bx| + R|\mbfA|)\,.  
			\end{split}
		\end{equation}
		
	\end{proposition}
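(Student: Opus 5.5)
The plan is to treat the transformed surface directly as a normal graph and expand to first order. Every point of $\Sigma\cap Q_R(0,\lambda^{-1}\by)$ has the form $X(\theta,y)=(\varrho+u(\theta,y))\,\theta/|\theta|+y$, with $\theta/|\theta|$ the outward unit normal of $\cC_{n,k}$ in the sphere factor. Apply $e^{\mbfA}(\lambda X-(\bx,\by))$ and, for each $(\theta',y')\in\cC_{n,k}\cap Q_{R-C_n\delta}$, look for the unique preimage $(\theta,y)$ and height $\bar u$ with
\[
e^{\mbfA}\bigl(\lambda X(\theta,y)-(\bx,\by)\bigr)=(\varrho+\bar u)\,\theta'/|\theta'|+y'.
\]
The smallness $\|u\|_{C^2}+|\bx|+|\by|+R(|\mbfA|+|\lambda-1|)\le\delta\le\kappa_n''$ makes the map $(\theta,y)\mapsto$ (projection of the transformed point to $\cC_{n,k}$) a $C^1$-small perturbation of the affine map $(\theta,y)\mapsto(\theta,\lambda y-\by)$ on $Q_R$. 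The inverse function theorem, or equivalently a contraction-mapping argument in the $y$ variable for each fixed $\theta'$, then gives existence and uniqueness of the preimage. The domain loss is controlled by the displacement, at most $|\by|+R|\lambda-1|+R|\mbfA|\lesssim\delta$, which explains the radius $R-C_n\delta$.

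For the first-order formula, I would split the motion into its three pieces. Dilation moves the radial coordinate from $\varrho+u$ to $\lambda(\varrho+u)$, which contributes $\varrho(\lambda-1)+u$ plus an error $(\lambda-1)u$. Dilation also moves the axial point $y$ to $\lambda y$; evaluating $u$ at the corrected base point produces the shift $y=\lambda^{-1}(y'+\by)$, which is $\lambda^{-1}\by+y'$ up to $|\lambda-1||y'|\lesssim R|\lambda-1|$. The translation $\bx$ in the sphere factor changes the radius by $-\langle\bx,\theta'/|\theta'|\rangle=-\psi_\bx(\theta')$ to first order; a second-order Taylor term $|\bx|^2/\varrho$ arises from the curvature of the sphere. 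The rotation $e^{\mbfA}$ changes the radius by $\langle\mbfA X,\bn\rangle=\psi_\mbfA(\theta',y')$, which has size $\lesssim R|\mbfA|$ because $|X|\lesssim R$, with quadratic remainder $|\mbfA|^2R^2$. Collecting these, the first-order terms match the stated expression up to the sign conventions for $\psi_\bx$ and $\psi_\mbfA$, which I would fix by this computation.

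The remainder terms are of three kinds: products of a motion parameter with $u$ or $\nabla u$, products of two motion parameters, and terms from evaluating $u$ at the perturbed preimage point rather than at $(\theta',\lambda^{-1}\by+y')$. The last kind is controlled by the mean value theorem using $\|u\|_{C^1}$ times the preimage displacement. The preimage displacement is itself $O(R|\lambda-1|+|\bx|+R|\mbfA|)$ once the pure shift $\lambda^{-1}\by$ has been absorbed into the base point; this is why $|\by|$ does not appear in the error factor. Each remainder is therefore bounded by $C_n(\|u\|_{C^2}+R|\lambda-1|+|\bx|+R|\mbfA|)(R|\lambda-1|+|\bx|+R|\mbfA|)$.

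I expect the main obstacle to be the bookkeeping in this last step. The axial translation $\by$ must be tracked exactly through the base point rather than expanded, because $|\by|$ is not assumed small relative to the other parameters in the error factor. The $R$-weighted terms also need care so that no uncontrolled factor $R^2$ appears: the quantities $R|\lambda-1|$ and $R|\mbfA|$ must always appear as such, and the smallness of $\delta$ has to absorb all higher-order Taylor terms.
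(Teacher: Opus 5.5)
Your proposal is correct and follows the paper's route: the paper writes the same coordinate identity $e^{-\mbfA}(\theta'+\bar u\hat\theta',y')+(\bx,\by)=\lambda(\theta+u\hat\theta,y)$, compares its spherical and axial components using the mean value theorem, and omits the details, which your inverse-function step for graphicality and your remainder bookkeeping supply. One point to keep explicit: compute the rotation term on the already translated point $\lambda X-(\bx,\by)$, whose axial coordinate is $\approx y'$, and not on $X$ itself, because otherwise a spurious $|\mbfA||\by|$ term appears that the stated error bound does not control.
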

	
	\begin{proof}
		The proof is based on the routine calculation and the mean value theorem.In fact, let $\hat\theta$ be the normalized vector of $\theta$. The coordinate identity corresponding to the transformation in the statement is
		\[
		e^{-\mbfA}\bigl(\theta'+\bar u(\theta',y')\hat\theta',y'\bigr)
		+(\bx,\by)
		=\lambda\bigl(\theta+u(\theta,y)\hat\theta,y\bigr).
		\]
		Comparing its spherical and axial components gives the desired estimates. We omit the remaining calculation details here.
	\end{proof}
	
	We also need the following lemma proved in \cite{SunWangXue1_Passing}
	
	\begin{lemma} 
		Let $\zeta\in L^\infty(\cC_{n,k})$ be such that $0\leq \zeta\leq 1$ and $\zeta|_{Q_R} = 1$; $\Pi: L^2(\cC_{n,k})\to L^2(\cC_{n,k})$ be the orthogonal projection map onto a closed linear subspace. Then for every $v\in L^2(\cC_{n,k})$, we have \[
		\big|\|\Pi(v\zeta)\|_{L^2} - \|\Pi(v)\|_{L^2}\big| \leq 2\|v\|_{L^2(\cC_{n,k}\setminus Q_R)} 
		\]
		In particular, if $\Lambda>0$, $v\in \oplus_{\gamma\leq \Lambda} \rmW_\gamma$, then \[
		\big|\|\Pi(v\zeta)\|_{L^2} - \|\Pi(v)\|_{L^2}\big| \lesssim_{n, \Lambda} e^{-R^2/10}\|v\|_{L^2}\,.
		\]
	\end{lemma}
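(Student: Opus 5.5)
The plan is to reduce the statement to the $L^2$-contraction property of orthogonal projections. Write $v\zeta = v - v(1-\zeta)$, so that by linearity $\Pi(v\zeta)=\Pi(v)-\Pi(v(1-\zeta))$. The reverse triangle inequality then gives
\[
\bigl|\|\Pi(v\zeta)\|_{L^2}-\|\Pi(v)\|_{L^2}\bigr|\leq \|\Pi(v(1-\zeta))\|_{L^2}\leq \|v(1-\zeta)\|_{L^2},
\]
since an orthogonal projection has operator norm at most one. Because $\zeta=1$ on $Q_R$ and $0\leq 1-\zeta\leq 1$, the function $v(1-\zeta)$ is supported in $\cC_{n,k}\setminus Q_R$ and is bounded there by $|v|$. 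Hence $\|v(1-\zeta)\|_{L^2}\leq \|v\|_{L^2(\cC_{n,k}\setminus Q_R)}$, which already gives the first inequality, with room to spare in the constant $2$.

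For the second assertion it suffices to show that for $v\in\bigoplus_{\gamma\leq\Lambda}\rmW_\gamma$,
\[
\|v\|_{L^2(\cC_{n,k}\setminus Q_R)}\lesssim_{n,\Lambda} e^{-R^2/10}\|v\|_{L^2}.
\]
The eigenspaces $\rmW_\gamma$ with $\gamma\leq\Lambda$ span a finite-dimensional space. Each element is a finite sum of products of spherical harmonics in $\theta$ (bounded on the compact sphere factor) and Hermite polynomials in $y$ of degree at most $d(n,\Lambda)$. On this finite-dimensional space all norms are equivalent, so pointwise
\[
|v(\theta,y)|\leq C_{n,\Lambda}(1+|y|)^{d}\|v\|_{L^2}.
\]
Substituting this into the weighted integral over the region where $|y|$ is large (or $|x|\gtrsim R$, depending on how $Q_R$ is defined), I would estimate
\[
\int_{|y|\geq R}(1+|y|)^{2d}e^{-|y|^2/4}\,dy\lesssim R^{2d+k}e^{-R^2/4}\lesssim_{n,\Lambda} e^{-R^2/5}.
\]
Taking square roots yields the factor $e^{-R^2/10}$, and combining with the first part completes the proof.

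The only point needing care is the exponent bookkeeping: the polynomial loss must be absorbed into the gap between $e^{-R^2/4}$ and $e^{-R^2/5}$, which holds for $R\geq R_0(n,\Lambda)$. For smaller $R$ the estimate is trivial, since the left side is at most $\|v\|_{L^2}$ and $e^{-R^2/10}$ is bounded below there. I expect no real obstacle.
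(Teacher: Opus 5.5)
Your proof is correct. Writing $\Pi(v\zeta)=\Pi(v)-\Pi(v(1-\zeta))$ and using that an orthogonal projection is an $L^2$-contraction gives the first inequality, in fact with constant $1$ instead of $2$. The second inequality then follows from the pointwise bound $|v|\leq C_{n,\Lambda}(1+|y|)^{d}\|v\|_{L^2}$ on the finite-dimensional space $\bigoplus_{\gamma\leq\Lambda}\rmW_\gamma$, combined with the Gaussian tail over $\cC_{n,k}\setminus Q_R$, which for any natural definition of $Q_R$ lies in $\{|y|\geq R-C_n\}$.

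The paper does not prove this lemma itself; it cites \cite{SunWangXue1_Passing}. Your argument is the standard one for such a statement.
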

	
	Now let us consider a compactly supported function  $u :\cC_{n,k}\to\R$, that is supported in $B_R$ with $\|u\|\leq \delta$. Let the graph of $u$ be $\Sigma$. Let $\xi:=|\bx| +|\by| + R(|\mbfA| + |\lambda - 1|)< \delta$ and let $\bar u$ be the graph of $e^{\mbfA}(\lambda\Sigma-(\bx,\by))$. Then for any unit eigenfunction $\psi$ we have
	\begin{align*}
		\int \chi_{R-C_n\delta}\psi \bar u d\mu
		=&
		\int \chi_{R-C_n\delta}  \psi \left(
		u(\theta',\lambda^{-1}\by+y') 
		+\varrho(\lambda-1)
		-\psi_\bx(\theta')
		+\psi_\mbfA(\theta',y')
		+\text{error}
		\right)d\mu
		\\
		=&
		\langle \psi,u\rangle+\varrho(\lambda-1)\langle \psi,1\rangle 
		-
		\langle \psi,\psi_\bx\rangle
		+
		\langle \psi,\psi_{\mbfA}\rangle
		\\
		&+O(e^{-(R-C_n\delta)^2/5}(\|\bar u\|_{L^2}+\xi))
		+O\left( \left(\|u\|_{C^2(Q_R(0, \by))} + \xi \right)\xi\right).
	\end{align*}
	
	Here we use the following fact: for any two functions $f,g$, using the mean value theorem,
	\begin{align*}
		\int f(\theta',y')g(\theta',\lambda^{-1}\by+y')d\mu(\theta',y')
		=&
		\int f(\theta',y')g(\theta',y') d\mu
		+
		O(\|f\|_{L^2}|\nabla g|_{C^0}|\lambda^{-1}\by|)
	\end{align*}
	
	From the above analysis, we see that $\psi_\by$ does not show up when we apply the translation. A key observation in the stability analysis of the nondegenerate singularities is that if the term $\sum_{i=1}^n(y_i^2-2)$ shows up, then $\psi_\by$ will show up when we apply the translation. In fact, we have the following corollary of Proposition \ref{prop:AppGraph_TransDila}.
	
	\begin{corollary}\label{cor_change_TDR}
		Suppose all the assumptions in Proposition \ref{prop:AppGraph_TransDila}. Further suppose $u=a \sum_{i=1}^k(y_i^2-2)+v$ and $\bar u=a \sum_{i=1}^k(y_i^2-2)+\bar v$. Then 
		\begin{equation}\label{eq:AppE(v)_nondeg}
			\begin{split}
				\sup_{(\theta', y')\in \cC_{n,k}\cap Q_{R-C_n\delta}} &\, \big|\bar v(\theta', y') - v(\theta', \lambda^{-1}\by +y') - [\sqrt{2(n-k)}(\lambda-1)+a\lambda^{-2}|\by|^2] 
				\\
				&\quad +
				\psi_\bx(\theta') - \psi_\mbfA(\theta', y')
				-2a\lambda^{-1}\psi_\by(y')
				\big| \\
				& \leq C_n \left(\|u\|_{C^2(Q_R(0, \by))} + R|\lambda-1| + |\bx|+ R|\mbfA| \right)\cdot (R|\lambda-1| + |\bx| + R|\mbfA|)\,.  
			\end{split}
		\end{equation}
		As a consequence,
		Let $\xi:=|\bx|+|\by| + R(|\mbfA| + |\lambda - 1|)< \delta$ and let $\bar u$ be the graph of $e^{\mbfA}(\lambda\Sigma-(\bx,\by))$. Then for any unit eigenfunction $\psi$ we have
		\begin{equation}
			\begin{split}
				\int \chi_{R-C_n\delta}\psi \bar u d\mu
				=&
				\langle \psi,u\rangle+[\varrho(\lambda-1)+a\lambda^{-2}|\by|^2]\langle \psi,1\rangle 
				-
				\langle \psi,\psi_\bx\rangle
				+
				\langle \psi,\psi_{\mbfA}\rangle
				+
				2a\lambda^{-1}\langle \psi,\psi_\by\rangle
				\\
				&+O(e^{-(R-C_n\delta)^2/5}(\|\bar u\|_{L^2}+\xi))
				+O\left( \left(\|u\|_{C^2(Q_R(0, \by))} + \xi \right)\xi\right).
			\end{split}
		\end{equation}
	\end{corollary}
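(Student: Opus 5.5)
The plan is to derive Corollary \ref{cor_change_TDR} directly from Proposition \ref{prop:AppGraph_TransDila}, applied to $u=a\sum_i(y_i^2-2)+v$. The only new ingredient is an exact expansion of the translated quadratic. The key identity is
\[
a\sum_{i=1}^k\bigl((\lambda^{-1}\by_i+y_i')^2-2\bigr)
=a\sum_{i=1}^k(y_i'^2-2)+2a\lambda^{-1}\,\by\cdot y'+a\lambda^{-2}|\by|^2 .
\]
Here $\by\cdot y'$ is, up to the fixed normalization of the basis $\{y_j\}$, the axial translation eigenfunction $\psi_\by(y')$. Substituting $u(\theta',\lambda^{-1}\by+y')=a\sum_i(y_i'^2-2)+2a\lambda^{-1}\psi_\by(y')+a\lambda^{-2}|\by|^2+v(\theta',\lambda^{-1}\by+y')$ into \eqref{eq:AppE(v)} and cancelling $a\sum_i(y_i'^2-2)$ from both $\bar u$ and $u$ gives exactly the bracketed expression in \eqref{eq:AppE(v)_nondeg}. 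The right-hand side is unchanged, since the identity is exact and introduces no error; the quadratic terms $a\lambda^{-2}|\by|^2$ and $2a\lambda^{-1}\psi_\by$ are kept explicitly rather than absorbed.

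One point needs checking. The error in Proposition \ref{prop:AppGraph_TransDila} involves $\|u\|_{C^2(Q_R(0,\by))}$ for the full $u$, including the quadratic part, and this is what appears in the corollary, so no extra hypothesis is needed. I would also check that the sign conventions for $\psi_\bx$ and $\psi_\mbfA$ match those of the proposition; they do, because those terms are copied verbatim.

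For the integrated statement, I will multiply the pointwise identity by $\chi_{R-C_n\delta}\psi$ and integrate against $d\mu$, following the computation preceding the corollary. The steps are as follows.
\begin{itemize}
\item Replace $\int\chi\psi\, v(\theta',\lambda^{-1}\by+y')\,d\mu$ by $\langle\psi,v\rangle$ using the mean value estimate stated above, which costs $O(\|\nabla v\|_{C^0}|\by|)\le O(\|u\|_{C^2}\xi)$ plus a term of order $a|\by|$ that is already of product type.
\item Remove the cutoff from the explicit eigenfunction terms (constant, $\psi_\bx$, $\psi_\mbfA$, $\psi_\by$) using the cutoff-projection lemma from \cite{SunWangXue1_Passing}. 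This produces the $O(e^{-(R-C_n\delta)^2/5}(\|\bar u\|_{L^2}+\xi))$ term, since these are low eigenmodes.
\item Integrate the pointwise error bound against the finite Gaussian measure, giving $O((\|u\|_{C^2}+\xi)\xi)$.
\item Recombine $a\sum(y_i^2-2)$ with $v$ on both sides, so the formula is stated for $u$ and $\bar u$.
\end{itemize}

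The main obstacle is bookkeeping rather than a conceptual difficulty. The cross term $2a\lambda^{-1}\by\cdot y'$ is not small relative to $\xi$ times a $C^2$-norm in the same way as the other errors. It must be treated as a main term, which is the whole point of the corollary, and the normalization constant relating $\by\cdot y'$ to $\psi_\by$ must be fixed consistently with its use in \eqref{Eq-centering-linearization}. I would handle this by defining $\psi_\by(y'):=\by\cdot y'$ (unnormalized), mirroring how $\psi_\bx$ is used in the proposition.
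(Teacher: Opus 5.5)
Your derivation of the pointwise estimate \eqref{eq:AppE(v)_nondeg} is correct and follows the route the paper intends. With $\psi_\by(y')=\by\cdot y'$, the expansion of the translated quadratic is exact, so substituting it into \eqref{eq:AppE(v)} leaves the right-hand side unchanged.

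The gap is in the integrated formula, at your first bullet. Because $\nabla v=\nabla u-2a\,y$, the shift error $\int\chi\psi\,[v(\theta',\lambda^{-1}\by+y')-v(\theta',y')]\,d\mu$ is in general of order $(\|u\|_{C^1}+a)|\by|$, even with the Gaussian weight. Its $a|\by|$ part is not ``already of product type'': no hypothesis bounds $a$ by $\|u\|_{C^2(Q_R)}+\xi$, since the splitting $u=a\sum_i(y_i^2-2)+v$ exists for every $a$. This error is also exactly the size of the term $2a\lambda^{-1}\langle\psi,\psi_\by\rangle$, which conflicts with your own correct remark that the cross term must be kept as a main term. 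Worse, when $u$ really is close to $a\sum_i(y_i^2-2)$ on $Q_R$, the error the statement allows, $O(\|u\|_{C^2}\xi)$, is already $\gtrsim aR^2|\by|$. In that case the formula carries no information about the $\psi_\by$-component.

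Without an extra hypothesis the integrated formula is actually false. Take:
\begin{itemize}
\item $u\equiv0$, so $\Sigma=\cC_{n,k}$, and $\bar u\equiv0$ because axial translations preserve the cylinder;
\item $\lambda=1$, $\bx=0$, $\mbfA=0$, $\by=(b,0,\dots,0)$;
\item a fixed $a>0$ and $\psi=y_1/\|y_1\|_{L^2}$.
\end{itemize}
The left side vanishes. Every main term on the right vanishes except $2ab\|y_1\|_{L^2}$. The identity would therefore force $2a\|y_1\|_{L^2}\leq C(e^{-(R-C_n\delta)^2/5}+b)$, which fails for $R$ large and $b$ small. The mean-value fact stated before the corollary does not rescue this: applied to the full $u$ it costs $\|\nabla u\|_{C^0}|\by|$, which again swamps the $\psi_\by$-term. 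So following the paper's sketch more closely will not close the gap.

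What you need is a $\by$-dependent error measured by $v$, not by $u$ or $a$. Two steps give it.
\begin{enumerate}
\item The pointwise error in \eqref{eq:AppE(v)} contains no $|\by|$. Integrate it as $O\bigl((\|u\|_{C^2}+\xi)(R|\lambda-1|+|\bx|+R|\mbfA|)\bigr)$ rather than $O((\|u\|_{C^2}+\xi)\xi)$.
\item Change variables $y'\mapsto y'-\lambda^{-1}\by$, so the translation falls on the test function $\chi\psi e^{-|y|^2/4}$ instead of on $v$. Since $\psi$ is a polynomial, for $R|\by|\lesssim1$ this bounds the shift error by $C_\psi|\by|\,\|v\|_{L^2}$, with the Gaussian-weighted norm taken near the support of the cutoff.
\end{enumerate}
Then add a hypothesis that this norm of $v$ is small compared with $a$, of the kind supplied by assumptions (2)--(3) of Proposition \ref{PropCentering}. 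With it, $2a\lambda^{-1}\langle\psi,\psi_\by\rangle$ becomes a genuine leading term, which is what the degree argument in Proposition \ref{PropCentering} requires.
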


	\bibliographystyle{alpha}
	\bibliography{GMT}
\end{document}